\numberwithin{equation}{section}
\theoremstyle{plain}
\newtheorem{theorem}{Theorem}[section]
\newtheorem{lemma}[theorem]{Lemma}
\newtheorem{proposition}[theorem]{Proposition}
\theoremstyle{definition}
\newtheorem{definition}[theorem]{Definition}
\newtheorem{example}[theorem]{Example}
\theoremstyle{remark}
\newtheorem*{remark}{Remark}
\newtheorem*{question}{Question}
\newcommand{\N}{\mathbb{N}}
\newcommand{\Z}{\mathbb{Z}}
\DeclareMathOperator{\id}{\rm id }
\DeclareMathOperator{\im}{\rm im}
\DeclareMathOperator{\coker}{\rm coker}
\DeclareMathOperator{\gkdim}{\rm GKdim}
\DeclareMathOperator{\Mod}{\rm Mod}
\DeclareMathOperator{\smod}{\rm mod}
\DeclareMathOperator{\Hom}{\rm Hom}
\DeclareMathOperator{\Aut}{\rm Aut}
\DeclareMathOperator{\Ext}{\rm Ext}
\DeclareMathOperator{\pdim}{\rm pdim}
\DeclareMathOperator{\idim}{\rm idim}
\DeclareMathOperator{\grad}{\rm grad}
\DeclareMathOperator{\gldim}{\rm gldim}
\DeclareMathOperator{\supp}{\rm supp}
\DeclareMathOperator{\Span}{\rm Span}
\DeclareMathOperator{\uHom}{\underline{\rm Hom}}
\DeclareMathOperator{\uExt}{\underline{\rm Ext}}
\DeclareMathOperator{\G}{\rm G}
\begin{document}

\title{Behavior of the Auslander condition with respect to regradings}

\author{G.-S. Zhou,\; Y. Shen\; and \; D.-M. Lu\ }


\address{\rm Zhou \newline \indent
Ningbo Institute of Technology, Zhejiang university, Ningbo 315100, China
\newline \indent E-mail: 10906045@zju.edu.cn
\newline\newline
\indent Shen \newline
\indent Department of Mathematics, Zhejiang Sci-Tech University, Hangzhou 310018, China \newline \indent E-mail: yuanshen@zstu.edu.cn
\newline\newline
\indent Lu \newline
\indent Department of Mathematics, Zhejiang University, Hangzhou 310027, China
\newline \indent E-mail: dmlu@zju.edu.cn}

\begin{abstract}
We show that a noetherian ring graded by an abelian group of finite rank satisfies the Auslander condition if and only if it satisfies the graded Auslander condition. In addition, we also study the injective dimension, the global dimension and the Cohen-Macaulay property from the same perspective of that for the Auslander condtion. A key step of our approach is to establish homological relations between a graded ring $R$, its quotient ring modulo the ideal $\hbar R$ and its localization ring with respect to the Ore set $\{\, \hbar^i\, \}_{i\geq0}$, where $\hbar$ is a homogeneous regular normal non-invertible element of $R$.
\end{abstract}

\subjclass[2000]{16E10, 16E65, 16W50}


\keywords{regrading; injective dimension; global dimension; Auslander condition; Cohen-Macaulay property}

\maketitle




\section{Introduction}

\newtheorem{maintheorem}{\bf{Theorem}}
\renewcommand{\themaintheorem}{\Alph{maintheorem}}
\newtheorem{mainproposition}[maintheorem]{\bf{Proposition}}

Though a bird's eye view on the graded ring theory  might give the impression that it is nothing but a naive extension of the ordinary ring theory, it has been justified that the grading structure carries substantial information about graded rings and their modules, both in theoretic and computational way. Throughout the paper $G$ and $H$ stand for abelian groups that are of finite rank. For each $G$-graded ring $A$ and each group homomorphism $\varphi:G\to H$, one has an $H$-graded ring $\varphi^*(A)$ with the same underlying ring structure as $A$ and the natural $H$-grading induced by $\varphi$ (see \S \ref{thesubsection-regrading}). This regrading construction gives rise to the following general problem in the graded ring theory: consider the categories of $G$-graded  (left) $A$-modules and of $H$-graded $\varphi^*(A)$-modules, then which properties of one category can be transferred to another. In particular, in the extreme case when $H$ is the trivial group, this problem reduces to establish connections between various properties in the graded setting and those in the ungraded setting. We refer to \cite{Ek,Ha,LV,NV1,NV2,RZ,SoZ} for relevant works in this perspective.

The Auslander condition is a fundamental homological property on rings that permits one to make effective use of homological techniques in noncommutative ring theory. It is expressed as follows on a ring $A$: for every finitely generated left or right $A$-module $M$, every integer $i$ and every submodule $N$ of $\Ext_A^i(M,A)$, it follows that $\Ext_{A}^j(N,A)=0$ for every $j<i$. The Auslander condition yields several classes of rings that are of special interest. A noetherian ring satisfies the Auslander condition is called a \emph{Gorenstein ring}. The arguments of \cite[Lemma 4.5]{Ba} show that a commutative noetherian ring is Gorenstein if and only if its localiztion ring at every prime ideal has a finite injective dimension. A Gorenstein ring with finite left and right injective dimensions (resp. finite global dimension) is called an \emph{Auslander-Gorenstein ring} (resp. \emph{Auslander-regular ring}). Clearly, Auslander-regular rings are Auslander-Gorenstein. Weyl algebras, universal enveloping algebras of finite dimensional Lie algebras, the Sklyanin algebras and known examples of Artin-Schelter regular algebras are all Auslander-regular. We refer to \cite{ASZ1,ASZ2,Bj,FGR,Lev} for basic properties of theses classes of rings.

In this paper, we examine the above mentioned problem for the Gorensteinness, the Auslander-Gorensteinness and the Auslander-regularity. In other words, this paper focus on the behavior of these homological properties with respect to regradings. Using the technique of filtrations, the works \cite{Ek,LV} successfully treated the special case that $G=\Z$ and $H=\{0\}$. Unfortunately, this  technique looses its effectiveness in the general case. Instead, a key step of our approach is to establish homological relations among $G$-graded rings $R$, $R/\hbar R$  and $R[\hbar^{-1}]$, where $\hbar$ denotes a homogenous regular normal non-invertible element of $R$. The main result of this paper is the following.


\begin{maintheorem}\nonumber\label{Auslander-regrading}
Let $A$ be a $G$-graded ring and $\varphi:G\to H$ a group homomorphism.
Then
\begin{enumerate}
\item $A$ is $G$-Gorenstein  if and only if $\varphi^*(A)$ is $H$-Gorenstein.
\item $A$ is  $G$-Auslander-Gorenstein if and only if $\varphi^*(A)$ is $H$-Auslander-Gorenstein.
\item $A$ is $G$-Auslander-regular if and only if $\varphi^*(A)$ is $H$-Auslander-regular, provided that the number of elements of the torsion part of $\ker \varphi$ is invertible in $A$.
\end{enumerate}
\end{maintheorem}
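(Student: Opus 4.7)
The plan is to reduce the general regrading $\varphi: G \to H$ to a chain of elementary regradings, and then invoke the homological comparisons among $R$, $R/\hbar R$, and $R[\hbar^{-1}]$ promised in the introduction.

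First I would factor $\varphi$ as the surjection $G \twoheadrightarrow G/\ker\varphi$ followed by the inclusion $G/\ker\varphi \hookrightarrow H$. The inclusion case is essentially formal: a choice of coset representatives for $\im\varphi$ in $H$ induces an equivalence between the category of $H$-graded $\varphi^*(A)$-modules and a product of copies of the category of $(G/\ker\varphi)$-graded $A$-modules, and this equivalence is compatible with $\Ext$, so the Auslander-type conditions hold on one side if and only if they hold on the other. It therefore suffices to treat surjective $\varphi$. Writing $K = \ker\varphi \cong \Z^r \oplus F$ with $F$ a finite abelian group, I would handle the two pieces in sequence. For the finite piece $F$, the passage between $G$-graded and $(G/F)$-graded modules is governed by a Maschke-type averaging argument based on the character group $\hat F$, which is precisely why the Auslander-regular statement in part (3) requires $|F|$ to be invertible in $A$. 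For the free piece $\Z^r$, I would induct on $r$, reducing everything to the case $K = \Z$.

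The decisive case is then: $A$ is $(G' \times \Z)$-graded, and we collapse the $\Z$-coordinate. Introduce the auxiliary ring $R := A[\hbar]$ with $\hbar$ a central indeterminate placed in degree $(0,1)$, so that $R$ is $(G' \times \Z)$-graded and $\hbar$ is a homogeneous regular normal non-invertible element. Two identifications then hold: $R/\hbar R \cong A$ as $(G' \times \Z)$-graded rings; and the $\Z$-degree-zero subring of $R[\hbar^{-1}]$ is canonically isomorphic to $\varphi^*(A)$ as a $G'$-graded ring, via $a\hbar^{-n} \mapsto a$ for $a \in A_{(g,n)}$. Because $R[\hbar^{-1}]$ is strongly $\Z$-graded (as $\hbar \in R_{(0,1)}$ is invertible), a Dade-type equivalence identifies $(G' \times \Z)$-graded $R[\hbar^{-1}]$-modules with $G'$-graded $\varphi^*(A)$-modules, and this equivalence preserves $\Ext$. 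Combining the link $R/\hbar R \leftrightarrow A$ with the link $R[\hbar^{-1}] \leftrightarrow \varphi^*(A)$, the Gorenstein, Auslander-Gorenstein, and Auslander-regular properties should pass between $A$ and $\varphi^*(A)$ once one knows how they pass between $R$ and the pair $(R/\hbar R, R[\hbar^{-1}])$.

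The main obstacle, and the technical heart of the proof, is precisely this latter step: one must show that the graded injective dimension, the global dimension, and the grade numbers $j_R(M) = \inf\{\, i : \Ext_R^i(M,R) \neq 0\,\}$ on $R$ are jointly controlled by those on $R/\hbar R$ and $R[\hbar^{-1}]$. The formal inputs should be the long exact $\Ext$-sequence arising from the graded bimodule resolution $0 \to R \xrightarrow{\hbar} R \to R/\hbar R \to 0$, exactness of localization in the form $\Ext_R^*(M,R) \otimes_R R[\hbar^{-1}] \cong \Ext_{R[\hbar^{-1}]}^*(M[\hbar^{-1}], R[\hbar^{-1}])$, and a Rees-type change-of-rings argument relating $\Ext_R^*(M,R)$ to $\Ext_{R/\hbar R}^*(M/\hbar M, R/\hbar R)$ when $M$ is $\hbar$-torsion free. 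Once these comparison lemmas are in place, the Auslander condition on $R$ becomes detectable by its restriction to $R/\hbar R$ and $R[\hbar^{-1}]$, and the transfer between $A$ and $\varphi^*(A)$ in all three parts of the theorem follows mechanically from the reduction scheme above.
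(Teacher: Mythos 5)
Your overall skeleton is the same as the paper's: factor $\varphi$ through its image (the injective case is handled exactly as in Lemma \ref{ext-regrading-injective}), induct on the rank of $\ker\varphi$ by splitting off one infinite cyclic direction at a time, and for that step pass to a polynomial extension $R=A[\hbar]$ with $\bar R\cong A$ and with $R[\hbar^{-1}]$ related to $\varphi^*(A)$ by a Dade-type equivalence (this is precisely the paper's $\pi_\varepsilon^*(A[t])$, the group ring $A[\Z\varepsilon]\cong\pi_\varepsilon^*(A[t,t^{-1}])$, and the dehomogenization equivalence of Lemma \ref{ext-regrading-surjective}). The first genuine gap is your treatment of the torsion part $F=T(\ker\varphi)$. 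Parts (1) and (2) of the theorem carry no hypothesis on $|F|$, but the only tool you propose there is a Maschke-type averaging (moreover phrased via the character group $\hat F$, which is not available over an arbitrary ring lacking roots of unity), and any averaging argument intrinsically needs $|F|$ invertible in $A$. So as written your plan proves (1) and (2) only under the extra invertibility hypothesis, which is strictly weaker than the statement. The paper handles the finite kernel $\Omega$ without any such hypothesis by a Frobenius-extension property of the group ring: $\uHom_A^G(A[\Omega],A)\cong A[\Omega]$ as graded bimodules, hence $\uExt^{i,G}_{A[\Omega]}(M,A[\Omega])\cong\uExt^{i,G}_A(M,A)$, so Gorensteinness passes to $A[\Omega]$ and then, via $\Xi_A^\varphi$, to $\varphi^*(A)$; averaging over the group elements is used only for the global-dimension bound, i.e.\ exactly where part (3) needs the invertibility.

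The second gap is the claim that, once one has the long exact sequence from $0\to R\xrightarrow{\hbar}R\to \bar R\to 0$, Ext-localization, and Rees' lemma, the transfer between $R$ and the pair $(\bar R,\hat R)$ ``follows mechanically.'' The direction your induction actually needs is the hard one: $\bar R=A$ Gorenstein $\Rightarrow$ $R$ (and then $\hat R$) Gorenstein. In the paper this requires the $\hbar$-discreteness of $R$ (here automatic for $A[\hbar]$) and, crucially, the filtration/spectral-sequence argument of Lemma \ref{spectral-sequence}, which identifies $F_\hbar(\uExt_R^{n,G}(M,R))\otimes_R\bar R$ with a subquotient of $\uExt_{\bar R}^{n,G}(\bar R\otimes_RM,\bar R)$; only with this, together with a careful splitting of homogeneous submodules of $\uExt_R^{n,G}(M,R)$ into $\hbar$-torsion and $\hbar$-torsionfree parts, can one bound grades over $R$ and $\hat R$ by grades over $\bar R$ (Theorem \ref{Auslander-cut-at-infinity-general}, Theorem \ref{Auslander-cut-at-infinity-special}, Proposition \ref{grade-cut-at-infinity}). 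Since the Auslander condition quantifies over all submodules of the Ext-groups, your three listed inputs alone do not control it, and this step is the technical heart you would still have to supply. (A minor point: when $\Z\varepsilon$ does not split off $G$ you cannot assume $A$ is $(G'\times\Z)$-graded; instead place $\hbar$ in degree $\varepsilon$, i.e.\ work with $A[\Z\varepsilon]$ as the paper does --- this is cosmetic.)
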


The injective and global dimensions are indispensable for our purpose. We also obtain a result on the behavior of  these two invariants with respect to regradings, which has an interest in its own right and asserts as in the following theorem. The reader should compare it with \cite[Proposition 4.1]{SoZ}  and \cite[Corollary 6.3.6, Corollary 6.4.2]{NV2}, where similar results are proved.

\begin{maintheorem}\label{homo-dim-regrading-general}
Let $A$ be a $G$-graded ring and $\varphi:G\to H$ a group homomorphism.
\begin{enumerate}
\item Suppose that $A$ is left $\G$-noetherian. Then for every module $N \in \Mod^GA$, it follows that
$$\idim_A^GN \leq \idim_{\varphi^*(A)}^H \varphi^*_A(N)\leq \idim_A^GN \ + \text{ the rank of } \ker \varphi.$$
\item Suppose that the number of elements of the torsion part of $\ker\varphi$ is invertible in $A$. Then $$\gldim(\Mod^GA) \leq \gldim(\Mod^H \varphi^*(A)) \leq \gldim(\Mod^GA)\ + \text{ the rank of } \ker \varphi.$$
\end{enumerate}
\end{maintheorem}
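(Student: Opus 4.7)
My plan is to decompose $\varphi$ into elementary pieces via the structure theorem for finitely generated abelian groups. Writing $\ker\varphi\cong T\oplus\Z^r$ with $T$ finite torsion and $r=\mathrm{rank}(\ker\varphi)$, I factor $\varphi$ through
$$G \twoheadrightarrow G/T \twoheadrightarrow G/(T\oplus\langle t_1\rangle) \twoheadrightarrow \cdots \twoheadrightarrow G/\ker\varphi \hookrightarrow H,$$
a chain of one surjection with kernel $T$, then $r$ consecutive surjections each with kernel $\Z$, followed by one injection. The ranks of these kernels sum to $r$, so both of the claimed inequalities telescope cleanly along this factorization, and it suffices to handle three elementary cases: (a) $\varphi$ injective; (b) $\varphi$ surjective with $\ker\varphi$ finite; (c) $\varphi$ surjective with $\ker\varphi\cong\Z$.

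Cases (a) and (b) should be comparatively soft. In (a), $\varphi^*(A)$ is supported in the subgroup $\varphi(G)\subseteq H$, and extension-by-zero yields an equivalence $\Mod^{\varphi(G)}A\simeq\Mod^H\varphi^*(A)$, preserving $\idim$ and $\gldim$ exactly. For (b), the Hom identity
$$\Hom^H_{\varphi^*(A)}(\varphi^*_A M,\varphi^*_A N)\;=\;\bigoplus_{s\in T}\Hom^G_A(M,N(s))$$
is a \emph{finite} direct sum; its derived version shows that $\Ext$-vanishing is preserved and reflected under the noetherian hypothesis, giving both bounds in (1) with zero contribution. For (2), invertibility of $|T|$ in $A$ supplies a Maschke-type averaging that handles projectives analogously.

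The heart of the argument is case (c). Fix a generator $t\in\ker\varphi$. Here the Hom identity is an infinite direct sum over $\Z$, and the regrading functor $\varphi^*_A$ admits a right adjoint $\varphi^\natural$ satisfying $\varphi^\natural\varphi^*_A(M)\cong\bigoplus_{n\in\Z}M(nt)$. I would establish a natural short exact sequence in $\Mod^G A$
$$0 \longrightarrow \bigoplus_{n\in\Z}M(nt) \xrightarrow{\ 1-\sigma\ } \bigoplus_{n\in\Z}M(nt) \longrightarrow \varphi^\natural\varphi^*_A(M) \longrightarrow 0,$$
where $\sigma$ is the shift by $t$; this is a graded analogue of the Koszul resolution for the group ring $\Z[\Z]$. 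Passing to the long exact $\Ext$-sequence interlaces $\Ext$ in $\Mod^G A$ with $\Ext$ in $\Mod^H\varphi^*(A)$, from which the lower bound follows by $\Ext$-reflection and the upper bound by a dimension-shift with a shift of at most $+1$ — precisely $\gldim\Z[\Z]=1$.

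The main obstacle will be in case (c): identifying $\varphi^\natural$ correctly, setting up the $(1-\sigma)$-exact sequence, and threading the resulting $\Ext$-comparison through both inequalities for both invariants. A secondary subtlety is checking that the Maschke averaging in (b) for $\gldim$ combines cleanly with the iterated $\Z$-step of (c), so that for a general kernel $T\oplus\Z^r$ the telescoped bound picks up exactly the contribution $r$, and that in the lower bound in (1) the noetherian hypothesis does not need to be invoked beyond its role in defining $\idim^G$.
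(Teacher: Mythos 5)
Your global plan — factor $\varphi$ into an injection, a surjection with finite torsion kernel, and rank-many surjections with kernel $\Z$, then telescope both inequalities — is essentially the same induction on the rank of $\ker\varphi$ that the paper runs, and your cases (a) and (b) correspond to Lemma \ref{ext-regrading-injective}, Proposition \ref{homo-dim-regrading-basic}(3), and the same Maschke averaging over the group ring $A[\ker\varphi]$. The genuine problem is the displayed sequence at what you yourself call the heart of the argument. As written it is not a sequence in $\Mod^GA$ at all: the shift $\sigma$ by $t$ does not preserve $G$-degrees, so $1-\sigma$ is not a morphism there; moreover, since you identify $\varphi^\natural\varphi^*_A(M)$ with $\bigoplus_{n\in\Z}\Sigma_{nt}M$, all three terms of your sequence are the same module, whereas the cokernel of $1-\sigma$ on $\bigoplus_{n\in\Z}\Sigma_{nt}M$ is $M$, not that direct sum. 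The usable statement is the counit sequence in $\Mod^H\varphi^*(A)$, taken for an \emph{arbitrary} object $P$ (not only $P=\varphi^*_A(M)$, since $\idim$ and $\gldim$ over $\varphi^*(A)$ must be tested against all $H$-graded modules):
$$0 \longrightarrow \varphi^*_A\bigl(\varphi^A_*(P)\bigr) \xrightarrow{\ 1-\sigma\ } \varphi^*_A\bigl(\varphi^A_*(P)\bigr) \longrightarrow P\longrightarrow 0 .$$
With this repair your case (c) does work: it yields $\pdim^H_{\varphi^*(A)}P\leq \pdim^G_A\varphi^A_*(P)+1$, hence the $\gldim$ bound, and applying $\Hom(-,\varphi^*_A(N))$ together with the derived adjunction $\Ext^j_{\Mod^H\varphi^*(A)}(\varphi^*_AX,\varphi^*_AN)\cong\Ext^j_{\Mod^GA}(X,\bigoplus_{n\in\Z}\Sigma_{nt}N)$ yields the $\idim$ bound. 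Note, however, that this is exactly where left $G$-noetherianness is indispensable for the upper bound in (1) (direct sums of injectives must be injective, so that $\idim^G_A\bigoplus_{n}\Sigma_{nt}N=\idim^G_AN$), and you also need the intermediate regraded rings to remain noetherian in order to iterate the step, which the paper verifies via Lemma \ref{behavior-polynomial-extension}(1).

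Once repaired, your treatment of the $\Z$-step is genuinely different from the paper's and more elementary: the paper passes through the dehomogenization equivalence of Lemma \ref{ext-regrading-surjective} to $\Mod^GA[\Z\varepsilon]$, identifies $A[\Z\varepsilon]$ with $\pi_\varepsilon^*(A[t,t^{-1}])$ (Lemma \ref{group-ring-isomorphism}), and then invokes the Section \ref{thesection-3} machinery for a graded ring with a regular normal element (Propositions \ref{idim-cut-at-infinity-general}, \ref{idim-cut-at-infinity-special}, \ref{gldim-cut-at-infinity-general}, \ref{gldim-cut-at-infinity-special}, packaged as Lemma \ref{behavior-polynomial-extension}); your induced two-term resolution extracts the ``$+1$ per copy of $\Z$'' directly, at the price of not producing the finer statements that the paper reuses for Theorem \ref{Auslander-regrading} and the Cohen--Macaulay results. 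Two smaller inaccuracies: in (a) the functor is not an equivalence onto $\Mod^{\varphi(G)}A$ — rather every $H$-graded module decomposes along cosets of $\varphi(G)$ into shifts of modules in the image of $\varphi^*_A$ (Lemma \ref{ext-regrading-injective}), which is what actually gives equality of both invariants; in (b) the finite Hom-decomposition only controls Ext between induced modules, and the correct mechanism for the $\idim$ bound is that every module is $\varphi$-finite, so $\varphi^*_A\cong\varphi^!_A$ is right adjoint to the exact functor $\varphi^A_*$ — precisely Proposition \ref{homo-dim-regrading-basic}(3), which you may simply quote. Finally, your appeal to the structure theorem tacitly assumes $\ker\varphi/T(\ker\varphi)$ is free of finite rank, which can fail for abelian groups of finite rank (e.g. $\Q$); the paper's own factorization makes the same tacit assumption, so you are not worse off on this point.
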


The paper is organized as follows. In Section 2, we fix basic definitions and especially we introduce the dehomogenization functor. In the next two sections, we establish several homological relations between $G$-graded rings $R$, $R/\hbar R$  and $R[\hbar^{-1}]$. Section 5 is devoted to prove the main results. In Section 6, we turn to check our ideas for the Cohen-Macaulay property, a companion of the Auslander condition. The final section focus on  graded rings endowed with a filtration by homogeneous subgroups.

\subsection*{Conventions}
Given an abelian group $\Omega$, we write $T(\Omega)$ for its torsion part and let $\bar{\Omega} := \Z\times \Omega$; given a $G$-graded ring $A$, we consider the polynomial ring $A[t]$ and the Laurent ring $A[t,t^{-1}]$ to be $\bar{G}$-graded by $\deg(at^i) = (i,\deg(a))$ when $a\in A$ being homogeneous; given an element $b$ of a ring $\Lambda$, we write $\lambda_b$ (resp. $\rho_b$) for the scalar multiplication of $b$ on any left (resp. right) $\Lambda$-module.

\section{Preliminaries}

\subsection{Basic definitions}\hspace{\fill}

Let $A=\bigoplus_{\gamma\in G}A_\gamma$ be a $G$-graded ring. We denote by $\Mod^G A$ the category of $G$-graded left $A$-modules, and by $\smod^GA$  the full subcategory consisting of finitely generated objects. Morphisms in $\Mod^GA$ are left $A$-module homomorphisms preserving degrees. We identify $\Mod^GA^o$ with the category of $G$-graded right $A$-modules, where  $A^o$ is the opposite $G$-graded ring of $A$.


The \emph{support} of a module $M \in\Mod^GA$ is the set $\supp M:= \{\,\gamma\in G\,|\, M_\gamma\neq0\,\}$.  A subgroup  $N$ of $M$ is called \emph{homogeneous} if $N=\bigoplus_{\gamma\in G}N\cap M_\gamma$, or equivalently, $N$ has a set of homogeneous generators. If all homogeneous submodules of $M$ are finitely generated then $M$ is called \emph{$G$-noetherian}. We say that $A$ is left (resp. right) $G$-noetherian if ${}_AA$ (resp. $A_A$) is $G$-noetheiran.

The {\em Jacobson $G$-radical} of  $A$ is denoted by $J^G(A)$. It equals to the intersection of all maximal homogeneous left ideals of $A$. We refer to \cite[Section 2.9]{NV2} for more details. Note the following graded version of the well-known \emph{Nakayama Lemma}: for any homogeneous ideal $I$ of $A$, it follows that $I\subseteq J^G(A)$ if and only if $IM\ne M$ for each nonzero module $M\in\smod^GA$.

We denote by $\Aut_G(A)$  the group of automorphisms of $A$ that preserve degrees. For a module $M\in\Mod^GA$ (resp. $M\in \Mod^GA^o$) and an automorphism $\mu\in \Aut_G(A)$, we define a new module ${}^\mu M \in \Mod^GA$ (resp. $M^\mu\in \Mod^GA^o$) as follows. It has the same underling abelian groups and $G$-grading as $M$; the left action (resp. right action) is given by $a*m=\mu(a)\cdot m$ (resp. $m*a=m\cdot \mu(a)$).

For each $\gamma\in G$, the \emph{$\gamma$-shift functor} $\Sigma_\gamma:\Mod^GA\to \Mod^GA$ sends an object $M$ to $M(\gamma)$, which equals to $M$ as a left $A$-module but with $G$-grading given by $M(\gamma)_{\gamma'}= M_{\gamma+\gamma'}$. The functor $\Sigma_\gamma$ acts as the identity map on morphisms.  For $M,\, N\in \Mod^GA$, we write $$\uHom_{A}^G(M,N):=\bigoplus\nolimits_{\gamma\in G} \Hom_{\Mod^GA}(M,\Sigma_\gamma (N)). $$
It is standard to check that $\Mod^GA$ is an abelian category with enough projective and injective objects. The projective and injective dimensions of a module $M\in \Mod^GA$ is denoted by $\pdim_A^GM$ and $\idim_A^GM$ respectively. For $i\in \Z$ and $M,\, N\in\Mod^GA$, we write
$$\uExt_A^{i,G}(M,N):=\bigoplus\nolimits_{\gamma\in G}\Ext_{\Mod^GA}^i(M,\Sigma_\gamma(N)).$$
Note that $\uHom_A^G(M,N)$ and $\uExt_A^{i,G}(M,N)$ become $G$-graded left (resp. right) $A$-modules if $M$ (resp. $N$) is a $G$-graded $A$-bimodule. Similar remarks applies to the tensor product $L\otimes_AM$ of a $G$-graded right $A$-module $L$ and a $G$-graded left $A$-module $M$. Here, $L\otimes_AM$ is graded by $\deg  (l\otimes m) = \deg l +\deg m$ when $l\in L$ and $m\in M$ being homogeneous.

The \emph{grade} of a module  $M\in\Mod^GA$  is a useful homological invariant, especially in the discussion of the Auslander condition. It is defined to be the number
$$
\grad_A^GM:=\inf\{\, i\in \Z \, |\,  \uExt_A^{i,G}(M,A)\neq 0\, \} \in \N\cup \{+\infty\}.
$$
Note that $\grad_A^GM$ is bounded from upper by $\min\{\, \pdim_A^GM,\, \idim_{A^o}^GA\, \}$ when $A$ is left $G$-noetherian and $M$ is finitely generated and  nonzero. Also, by definition, $\grad_A^G0=+\infty$.

The following lemma is an easy consequence of the long exact $\uExt$-sequence.

\begin{lemma}\label{grade-short-exact-sequence}
Let $A$ be a $G$-graded ring and $0\to L\to M \to N\to 0$ an exact sequence in $\Mod^GA$. Then $\grad_A^GM\geq \min\{\ \grad_A^G L,\ \grad_A^GN\ \}$  and $\grad_A^GN\geq \min\{\ \grad_A^GL,\ \grad_A^GM\ \}.$ \hspace{\fill} $\Box$
\end{lemma}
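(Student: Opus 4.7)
The plan is to apply the contravariant functor $\uHom_A^G(-,A)$ to the given short exact sequence and to read both inequalities directly from the resulting long exact sequence of graded $\uExt$-groups. Concretely, for each fixed $\gamma\in G$, the usual long exact $\Ext$-sequence in the abelian category $\Mod^GA$ attached to the object $\Sigma_\gamma(A)$ is exact. Since taking a direct sum of exact sequences of abelian groups is exact, the definition
\[
\uExt_A^{i,G}(-,A)\ =\ \bigoplus\nolimits_{\gamma\in G}\Ext_{\Mod^GA}^i(-,\Sigma_\gamma(A))
\]
yields a long exact sequence
\[
\cdots\to \uExt_A^{i-1,G}(L,A)\to \uExt_A^{i,G}(N,A)\to \uExt_A^{i,G}(M,A)\to \uExt_A^{i,G}(L,A)\to \uExt_A^{i+1,G}(N,A)\to\cdots
\]

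Write $g_L=\grad_A^GL$, $g_M=\grad_A^GM$, $g_N=\grad_A^GN$. For the first inequality, if $i<\min\{g_L,g_N\}$, then by the definition of the grade both $\uExt_A^{i,G}(L,A)$ and $\uExt_A^{i,G}(N,A)$ vanish, so the middle term $\uExt_A^{i,G}(M,A)$ is squeezed to $0$ by the displayed sequence; this proves $g_M\geq\min\{g_L,g_N\}$. For the second inequality, if $i<\min\{g_L,g_M\}$, then in particular $i-1<g_L$ and $i<g_M$, so $\uExt_A^{i-1,G}(L,A)=0$ and $\uExt_A^{i,G}(M,A)=0$; the same sequence then forces $\uExt_A^{i,G}(N,A)=0$, which gives $g_N\geq\min\{g_L,g_M\}$.

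There is no real obstacle: the lemma is a purely formal consequence of exactness of the long $\uExt$-sequence, the only thing to verify being that the $G$-graded direct sum preserves exactness, which is immediate. One should also keep in mind the convention $\grad_A^G0=+\infty$, so that the inequalities remain valid in the degenerate case when $L$, $M$ or $N$ is zero (in which case the short exact sequence simply degenerates into an isomorphism between the other two terms).
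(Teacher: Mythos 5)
Your proof is correct and follows exactly the route the paper indicates: the lemma is stated there as an immediate consequence of the long exact $\uExt$-sequence obtained by applying $\uHom_A^G(-,A)$ to the short exact sequence, which is precisely your argument (including the harmless observation that the degreewise direct sum of the long exact $\Ext$-sequences stays exact). Nothing is missing; the vanishing-squeeze in the segments $\uExt_A^{i,G}(N,A)\to\uExt_A^{i,G}(M,A)\to\uExt_A^{i,G}(L,A)$ and $\uExt_A^{i-1,G}(L,A)\to\uExt_A^{i,G}(N,A)\to\uExt_A^{i,G}(M,A)$ gives both inequalities exactly as you say.
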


Next, we introduce the graded version of the Auslander condition. To save spaces, we denote by $\smod^G_{au}A$ the full subcategory of $\smod^GA$ consisting of all modules  $M$ that satisfy the following condition:
for any $i\geq0$ and any homogeneous $A^o$-submodule $N$ of $\uExt_A^{i,G}(M,A)$, it follows that $\uExt_{A^o}^{j,G}(N,A) =0$ for every $j<i$ (or equivalently $\grad_{A^o}^GN\geq i$). Then  the graded version of the Auslander condition on $A$ just means that $\smod^G_{au}A=\smod^GA$ and $\smod^G_{au}A^o=\smod^GA^o$.

\begin{definition}\label{Auslander-definition}
We say that a $G$-graded ring $A$ is
\begin{enumerate}
\item \emph{$G$-Gorenstein} if it is left and right $G$-noetherian,  $\smod^G_{au}A=\smod^GA$ and $\smod^G_{au}A^o=\smod^GA^o$;
\item \emph{$G$-Auslander-Gorenstein} if it is $G$-Gorenstein and the injective dimension of $A$ in $\Mod^GA$ and $\Mod^GA^o$ are both finite;
\item  \emph{$G$-Auslander-regular} if it is $G$-Gorenstein and the global dimension of $\Mod^GA$ and $\Mod^GA^o$ are both finite.
\end{enumerate}
\end{definition}

We derive readily from Lemma \ref{grade-short-exact-sequence} a useful property of the subcategory $\smod^G_{au}A$ as below.

\begin{lemma}\label{Auslander-extension}
Let $A$ be a $G$-graded ring and $0\to L\to M \to N\to 0$ an exact sequence in $\Mod^GA$. Then, if $L$ and $N$ are in $\smod^G_{au}A$, it follows that $M$ is too. \hspace{\fill} $\Box$
\end{lemma}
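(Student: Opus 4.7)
The plan is to use the long exact sequence of $\uExt$ attached to $0\to L\to M\to N\to 0$, together with Lemma \ref{grade-short-exact-sequence}, to bound the grade of any homogeneous submodule of $\uExt_A^{i,G}(M,A)$ from below by $i$. Finite generation of $M$ is immediate: lifting generators of $N$ and appending generators of $L$ produces a generating set of $M$, so $M\in \smod^GA$.

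Fix $i\geq 0$ and a homogeneous $A^o$-submodule $K\subseteq \uExt_A^{i,G}(M,A)$. Consider the portion
\begin{equation*}
\uExt_A^{i,G}(N,A)\xrightarrow{\,f\,} \uExt_A^{i,G}(M,A)\xrightarrow{\,g\,}\uExt_A^{i,G}(L,A)
\end{equation*}
of the long exact sequence, which is a sequence of homogeneous $A^o$-module maps. Set $K':=g(K)$ and $K'':=K\cap \ker g=K\cap \im f$. Both are homogeneous $A^o$-submodules, and they fit into a short exact sequence $0\to K''\to K\to K'\to 0$ in $\Mod^GA^o$. Since $L\in \smod^G_{au}A$, the homogeneous submodule $K'\subseteq \uExt_A^{i,G}(L,A)$ satisfies $\grad_{A^o}^GK'\geq i$.

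To handle $K''$, I would pull it back along $f$: the preimage $f^{-1}(K'')$ is a homogeneous $A^o$-submodule of $\uExt_A^{i,G}(N,A)$, and there is a short exact sequence $0\to \ker f\to f^{-1}(K'')\to K''\to 0$. Both $\ker f$ and $f^{-1}(K'')$ are homogeneous submodules of $\uExt_A^{i,G}(N,A)$, and since $N\in \smod^G_{au}A$, both have grade $\geq i$. Applying Lemma \ref{grade-short-exact-sequence} to this surjection yields $\grad_{A^o}^GK''\geq i$. Applying Lemma \ref{grade-short-exact-sequence} once more to $0\to K''\to K\to K'\to 0$ gives $\grad_{A^o}^GK\geq \min\{\grad_{A^o}^GK'',\grad_{A^o}^GK'\}\geq i$, which is what we need.

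This is a routine diagram chase; there is no genuine obstacle. The only mild subtlety is that the grade condition is required on \emph{every} homogeneous submodule of the $\uExt$-module, not just on the $\uExt$-module itself, which is exactly why the two-step argument (first controlling $K'$ via $L$, then $K''$ via pullback to $N$) is necessary rather than a single application of the long exact sequence to $\uExt_A^{i,G}(M,A)$ as a whole.
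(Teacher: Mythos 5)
Your argument is correct and is exactly the route the paper intends (the lemma is stated as a ready consequence of Lemma \ref{grade-short-exact-sequence} via the long exact $\uExt$-sequence): splitting a homogeneous submodule $K$ of $\uExt_A^{i,G}(M,A)$ as an extension of $g(K)\subseteq\uExt_A^{i,G}(L,A)$ by $K\cap\im f$, pulling the latter back to $\uExt_A^{i,G}(N,A)$, and applying Lemma \ref{grade-short-exact-sequence} twice. Nothing is missing.
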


\subsection{The regrading functors} \hspace{\fill}

\label{thesubsection-regrading}

Let $A$ be a $G$-graded ring and $\varphi:G\to H$ a group homomorphism. We denote by $\varphi^*(A)$ the $H$-graded ring with the same underlying ring as  $A$ and the  $H$-grading given by $$\varphi^*(A) :=\bigoplus\nolimits_{\delta\in H} \bigoplus\nolimits_{\gamma \in \varphi^{-1}(\delta)} A_\gamma.$$
There are three natural functors between $\Mod^GA$ and $\Mod^H\varphi^*(A)$ induced by $\varphi$, which we will describe below. The notation we take are different from that in \cite{RZ}.
\begin{itemize}\item The \emph{lower star functor}, $\varphi_*^A: \Mod^H\varphi^*(A) \to \Mod^GA$. For any  $N\in \Mod^H\varphi^*(A)$,
    $$
    \varphi_*^A(N) = \bigoplus\nolimits_{\gamma\in G} N_{\varphi(\gamma)} u_\gamma,
    $$
    where $u_\gamma$ is  a placeholder. The action of $a\in A_{\alpha}$ on $n u_\gamma\in \varphi_*^A(N)_\gamma$ yields $(an) u_{\alpha+\gamma}\in \varphi_*^A(N)_{\alpha+\gamma}$.
\item The \emph{upper star functor}, $\varphi^*_A: \Mod^GA\to \Mod^H\varphi^*(A)$. For any  $M\in \Mod^GA$,
    $$\varphi^*_A(M)=\bigoplus\nolimits_{\delta\in H} \bigoplus\nolimits_{\gamma \in \varphi^{-1}(\delta)} M_\gamma.$$
    The underlying $A$-module structure of  $\varphi^*_A(M)$ is the same as that of $M$.
\item The \emph{upper shriek functor}, $\varphi^!_A: \Mod^GA\to \Mod^H\varphi^*(A)$. For any  $M\in \Mod^GA$,
    $$
    \varphi^!_A(M)=\bigoplus\nolimits_{\delta\in H}\prod\nolimits_{\gamma\in\varphi^{-1}(\delta)} M_\gamma.
    $$
    The action of  $a\in A_\alpha$ on $(m_{\gamma})_{\gamma\in \varphi^{-1}(\delta)} \in \varphi^!_A(M)_{\delta}$ yields $(a m_{\gamma- \alpha})_{\gamma\in \alpha+\varphi^{-1}(\delta)} \in \varphi^!_A(M)_{\varphi(\alpha)+\delta}$.
\end{itemize}
These three functors act in the obvious way on morphisms.

Clearly, the regrading functors are all exact. An interesting fact is that $\varphi^*_A$ is left adjoint to $\varphi_*^A$ and  $\varphi^!_A$ is right adjoint to $\varphi_*^A$, see \cite[Proposition 1.3.2]{RZ}. Also, for a module $M\in \Mod^GA$, one has
$$\varphi^A_*(\varphi_A^*(M)) \cong \bigoplus\nolimits_{\gamma\in \ker \varphi} \Sigma_{\gamma} M, \quad \text{and} \quad  \varphi^A_*(\varphi_A^!(M)) \cong \prod\nolimits_{\gamma\in \ker \varphi} \Sigma_{\gamma} M.$$
Furthermore, the natural inclusion morphism $\varphi_A^*(M) \to \varphi_A^!(M)$ is an isomorphism if and only if $M$ is \emph{$\varphi$-finite}, that is, $\supp M \cap \varphi^{-1}(\delta)$ is finite for all $\delta\in H$.

The above observations readily yields the following result.

\begin{proposition}\label{homo-dim-regrading-basic}
Let $A$ be a $G$-graded ring and $\varphi:G\to H$ a group homomorphism. Then
\begin{enumerate}
\item $\pdim_{\varphi^*(A)}^H \varphi^*_A(M)= \pdim_A^GM$  for every module $M\in \Mod^GA$.
\item $\idim_{\varphi^*(A)}^H \varphi^*_A(M) \geq \idim_A^GM$ for every module  $M\in \Mod^GA$.
\item  $\idim_{\varphi^*(A)}^H \varphi^*_A(M) = \idim_A^GM$ for every $\varphi$-finite module $M\in \Mod^GA$.
\item $\gldim(\Mod^GA)\leq \gldim(\Mod^H\varphi^*(A))$. \hspace{\fill} $\Box$
\end{enumerate}
\end{proposition}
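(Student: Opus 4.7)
The strategy is to leverage the two adjunctions $\varphi^*_A \dashv \varphi^A_*$ and $\varphi^A_* \dashv \varphi^!_A$ together with exactness of all three regrading functors. The first thing I would record is that, because a functor with an exact right adjoint sends projectives to projectives and a functor with an exact left adjoint sends injectives to injectives, we obtain four preservation statements: $\varphi^*_A$ and $\varphi^A_*$ preserve projectives (with exact right adjoints $\varphi^A_*$ and $\varphi^!_A$), and $\varphi^A_*$ and $\varphi^!_A$ preserve injectives (with exact left adjoints $\varphi^*_A$ and $\varphi^A_*$). This is the one observation the whole proof rests on.

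For (1), exactness of $\varphi^*_A$ and projective preservation give, from any projective resolution $P^\bullet \to M$ of length $n$, a projective resolution $\varphi^*_A(P^\bullet) \to \varphi^*_A(M)$ of the same length, so $\pdim^H_{\varphi^*(A)}\varphi^*_A(M)\le \pdim^G_A M$. Conversely, apply $\varphi^A_*$ (exact, projective-preserving) to a projective resolution of $\varphi^*_A(M)$ of length $n$; the outcome is a projective resolution of $\varphi^A_*\varphi^*_A(M)\cong\bigoplus_{\gamma\in\ker\varphi}\Sigma_\gamma M$ of length $n$. Since $M$ is a direct summand of this object (the $\gamma=0$ factor) and projective dimension cannot increase on summands, $\pdim^G_A M\le n$.

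For (2), mimic the second half of the above argument with ``injective'' in place of ``projective'': an injective resolution of $\varphi^*_A(M)$ of length $n$ is carried by $\varphi^A_*$ to an injective resolution of $\bigoplus_{\gamma\in\ker\varphi}\Sigma_\gamma M$ of the same length, so $\idim^G_A M\le n$. For (3), supplying the missing inequality requires the other functor: apply $\varphi^!_A$ (exact and injective-preserving) to an injective resolution of $M$ of length $\idim^G_A M$ to get an injective resolution of $\varphi^!_A(M)$ of the same length; for $\varphi$-finite $M$ the natural map $\varphi^*_A(M)\to\varphi^!_A(M)$ is an isomorphism, so $\idim^H_{\varphi^*(A)}\varphi^*_A(M)\le\idim^G_A M$. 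Finally, (4) is immediate from (1), since $\gldim(\Mod^G A)=\sup_M \pdim^G_A M$ and each such $\pdim^G_A M$ equals $\pdim^H_{\varphi^*(A)}\varphi^*_A(M)\le\gldim(\Mod^H\varphi^*(A))$.

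There is no serious obstacle here; the only subtlety is keeping straight which adjunction furnishes which preservation property. The asymmetry between (2) and (3) is precisely the asymmetry between $\varphi^*_A$ (a \emph{left} adjoint to $\varphi^A_*$, hence not a priori injective-preserving) and $\varphi^!_A$ (a \emph{right} adjoint to $\varphi^A_*$, which is); the $\varphi$-finiteness hypothesis in (3) is exactly the condition needed to replace one by the other.
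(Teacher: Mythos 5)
Your proposal is correct and is essentially the argument the paper intends: the result is stated as an immediate consequence of the observations that the regrading functors are exact, that $\varphi^*_A \dashv \varphi^A_*$ and $\varphi^A_* \dashv \varphi^!_A$ (hence the stated preservation of projectives and injectives), that $\varphi^A_*\varphi^*_A(M)\cong\bigoplus_{\gamma\in\ker\varphi}\Sigma_\gamma M$ exhibits $M$ as a direct summand, and that $\varphi^*_A(M)\cong\varphi^!_A(M)$ exactly for $\varphi$-finite $M$. Your write-up fills in these routine steps correctly, so there is nothing to add.
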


Recall that a module $M\in \Mod^GA$ is said to be \emph{pseudo-coherent} if it admits a projective resolution in $\Mod^GA$ with each term finitely generated. Clearly, if $A$ is left $G$-noetherian then pseudo-coherent objects of $\Mod^GA$ coincide with finitely generated objects.

\begin{lemma}\label{ext-regrading-basic}
Let $A$  be a $G$-graded ring and  $\varphi:G\to H$  a group homomorphism. Let $M\in\Mod^GA$ be a pseudo-coherent module. Then there is a natural isomorphism
$$
\uExt_{\varphi^*(A)}^{i,H} (\varphi_A^*(M), \varphi^*(A)) \cong\varphi_{A^o}^*(\uExt_A^{i,G}(M,A))
$$
in $\Mod^H\varphi^*(A)^o$ for every integer $i\in \Z$. Consequently, $\grad_A^GM=\grad_{\varphi^*(A)}^HM$. \hspace{\fill} $\Box$
\end{lemma}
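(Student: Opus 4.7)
The plan is to reduce the statement, via a projective resolution of $M$, to the analogous (and essentially tautological) fact about $\uHom$ on finitely generated free graded modules. First I would construct a natural transformation
$$
\eta_N\colon \varphi_{A^o}^*\bigl(\uHom_A^G(N,A)\bigr) \longrightarrow \uHom_{\varphi^*(A)}^H\bigl(\varphi_A^*(N),\varphi^*(A)\bigr), \qquad N\in\Mod^G A,
$$
by interpreting a degree-preserving $A$-linear map $f\colon N\to \Sigma_\gamma A$ as a degree-preserving $\varphi^*(A)$-linear map $\varphi_A^*(N)\to \Sigma_{\varphi(\gamma)}\varphi^*(A)$: the underlying set-theoretic data of $f$ are unchanged, and only the grading label gets pushed through $\varphi$. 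Naturality in $N$ and compatibility with the right $\varphi^*(A)$-action inherited from the bimodule $\varphi^*(A)$ are routine to check.

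Next I would verify that $\eta_N$ is an isomorphism whenever $N=A(\gamma_0)$ for some $\gamma_0\in G$: a direct grading computation shows that both sides identify canonically with $\varphi^*(A)(-\varphi(\gamma_0))$. By additivity in $N$, this extends to every finitely generated free object of $\Mod^G A$.

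Since $M$ is pseudo-coherent, I would then choose a resolution $P_\bullet\to M$ in $\Mod^G A$ with every $P_j$ a finitely generated free graded module. The functor $\varphi_A^*$ is exact and is left adjoint to the exact functor $\varphi_*^A$, so it preserves projectivity; hence $\varphi_A^*(P_\bullet)\to\varphi_A^*(M)$ is a projective resolution in $\Mod^H\varphi^*(A)$. Applying $\eta$ in each degree yields an isomorphism of cochain complexes
$$
\varphi_{A^o}^*\bigl(\uHom_A^G(P_\bullet,A)\bigr) \;\cong\; \uHom_{\varphi^*(A)}^H\bigl(\varphi_A^*(P_\bullet),\varphi^*(A)\bigr),
$$
and taking $i$-th cohomology, together with the exactness of $\varphi_{A^o}^*$ which lets it commute with $H^i$, produces the asserted natural isomorphism on $\uExt$.

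The consequence for grade is then immediate: the underlying abelian group of $\varphi_{A^o}^*(X)$ coincides with that of $X$, so $\varphi_{A^o}^*(X)=0$ if and only if $X=0$, and the vanishing pattern of $i\mapsto\uExt_A^{i,G}(M,A)$ is faithfully transported to that of $i\mapsto \uExt_{\varphi^*(A)}^{i,H}(\varphi_A^*(M),\varphi^*(A))$. I do not expect a substantive obstacle here; the only delicate point is the grading bookkeeping that makes $\eta_N$ well defined and confirms that, on finitely generated free modules, it intertwines the right $A$-action on the source with the right $\varphi^*(A)$-action on the target.
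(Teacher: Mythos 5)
Your proposal is correct and is exactly the routine argument the paper suppresses behind the $\Box$: identify $\varphi_{A^o}^*\circ\uHom_A^G(-,A)$ with $\uHom_{\varphi^*(A)}^H(\varphi_A^*(-),\varphi^*(A))$ on finitely generated free (equivalently, finitely generated projective) objects, note that $\varphi_A^*$ is exact and preserves projectivity by adjunction with the exact functor $\varphi_*^A$, and pass to cohomology of a finite-type resolution using exactness of $\varphi_{A^o}^*$. The only cosmetic remark is that pseudo-coherence directly provides a resolution by finitely generated projectives, and since your $\eta_N$ is already an isomorphism on these (as direct summands of finitely generated free modules, by additivity and naturality), the reduction to free resolutions is not even needed.
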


\begin{lemma}\label{ext-regrading-injective}
Let $A$ be a $G$-graded ring and $\varphi:G\to H$ an injective group homomorphism. Then the functor $\varphi_A^*:\Mod^GA\to \Mod^H\varphi^*(A)$ is fully faithful and every module in $\Mod^H\varphi^*(A)$ is a direct sum of shifts of modules in the image of $\varphi_A^*$. Moreover, there is a natural isomorphism
$$\uExt_{\varphi^*(A)}^{i,H}(\varphi^*_A(M), \varphi^*(A)) \cong \varphi_{A^o}^*(\uExt_A^{i,G}(M,A))$$
in $\Mod^H\varphi^*(A)^o$ for every integer $i\in \Z$ and every module $M\in\Mod^GA$.  \hspace{\fill} $\Box$
\end{lemma}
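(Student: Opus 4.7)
The first claim---that $\varphi^*_A$ is fully faithful---I would settle by a direct inspection: a morphism $f:\varphi^*_A(M)\to \varphi^*_A(N)$ is an $A$-linear map respecting the $H$-grading, and since $\varphi^{-1}(\varphi(\gamma))=\{\gamma\}$ for each $\gamma\in G$ by injectivity, the $\varphi(\gamma)$-homogeneous component of $\varphi^*_A(M)$ coincides with $M_\gamma$. Hence $f$ automatically respects the $G$-grading and descends uniquely to a morphism in $\Mod^GA$.

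For the decomposition claim, the key observation is that $\supp\varphi^*(A)\subseteq \im\varphi$. Consequently, for any $N\in\Mod^H\varphi^*(A)$ and any coset $\delta+\im\varphi$, the subgroup $N^{(\delta)}:=\bigoplus_{\beta\in\delta+\im\varphi}N_\beta$ is an $H$-graded $\varphi^*(A)$-submodule of $N$. Fixing a system of coset representatives $\{\delta_i\}_{i\in I}$ for $\im\varphi$ in $H$ gives a decomposition $N\cong\bigoplus_{i\in I}N^{(\delta_i)}$ in $\Mod^H\varphi^*(A)$. Defining $M_i\in\Mod^GA$ by $(M_i)_\gamma:=N_{\delta_i+\varphi(\gamma)}$ with the evident $A$-action, I would then verify that $N^{(\delta_i)}\cong \Sigma_{\delta_i}\varphi^*_A(M_i)$.

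For the natural isomorphism, the strategy is the standard one. The functor $\varphi^*_A$ is exact (by inspection of the definition) and, being the left adjoint of the exact functor $\varphi^A_*$ (by \cite[Proposition~1.3.2]{RZ}), preserves projectives. Hence a projective resolution $P_\bullet\to M$ in $\Mod^GA$ yields a projective resolution $\varphi^*_A(P_\bullet)\to \varphi^*_A(M)$ in $\Mod^H\varphi^*(A)$ computing the relevant $\uExt$. It thus suffices to construct a natural isomorphism
\[
\uHom^H_{\varphi^*(A)}(\varphi^*_A(P),\varphi^*(A))\cong \varphi^*_{A^o}(\uHom^G_A(P,A))
\]
for each $P=\bigoplus_{j\in J}\Sigma_{\gamma_j}A$, then take cohomology and pull $\varphi^*_{A^o}$ through using its exactness. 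Computing both sides in $H$-degree $\delta$ gives $\prod_{j\in J}\varphi^*(A)_{\delta-\varphi(\gamma_j)}$ on the left and $\bigoplus_{\gamma\in\varphi^{-1}(\delta)}\prod_{j\in J}A_{\gamma-\gamma_j}$ on the right; injectivity of $\varphi$ forces $\varphi^{-1}(\delta)$ to be empty or a singleton $\{\gamma\}$, in which case $\varphi^*(A)_{\delta-\varphi(\gamma_j)}=A_{\gamma-\gamma_j}$, so the two sides match degreewise via the tautological identification.

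The main obstacle is precisely this compatibility with possibly infinite products: without further assumptions, $\uHom$ turns direct sums into products, and comparing a product of direct sums with a direct sum of products is what forces the pseudo-coherence hypothesis in Lemma~\ref{ext-regrading-basic}. Injectivity of $\varphi$ circumvents this because for every $H$-degree $\delta$ the outer sum $\bigoplus_{\gamma\in\varphi^{-1}(\delta)}$ has at most one nonzero term, which makes the reordering trivial and the natural transformation an isomorphism for arbitrary (not necessarily finitely generated) $P$.
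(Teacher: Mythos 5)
Your proof is correct and is essentially the argument the paper leaves to the reader (the lemma is stated with its proof omitted): degreewise identification of homogeneous components under injectivity gives full faithfulness, decomposition of $N$ along cosets of $\im\varphi$ gives the second claim, and transporting a projective resolution through the exact, projective-preserving functor $\varphi^*_A$ plus the degreewise comparison on free modules gives the $\uExt$ isomorphism, with injectivity indeed being exactly what removes the pseudo-coherence hypothesis of Lemma \ref{ext-regrading-basic}. The only nit is a sign: with the paper's convention $\Sigma_\gamma(M)_{\gamma'}=M_{\gamma+\gamma'}$, your piece $N^{(\delta_i)}$ is $\Sigma_{-\delta_i}\varphi^*_A(M_i)$ rather than $\Sigma_{\delta_i}\varphi^*_A(M_i)$, which does not affect the statement.
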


\subsection{The dehomogenization functor} \hspace{\fill}

Let $A$ be a $G$-graded ring and $\varphi:G\to H$ a group homomorphism. For a subsemigroup $\Omega$ of $G$, the semigroup ring $A[\Omega]$ is the free $A$-module $\bigoplus_{\delta\in \Omega} A e_\delta$ equipped with the multiplication given by $ae_\alpha\cdot be_\beta=(ab) e_{\alpha+\beta}$ for $a,b\in A$ and $\alpha,\beta \in \Omega$. We introduce a $G$-grading on $A[\Omega]$ by putting $$A[\Omega]_\gamma=\bigoplus\nolimits_{\delta\in \Omega} A_{\gamma-\delta}e_\delta, \quad \gamma\in G.$$
Look at the maps
$$
A\xrightarrow{\iota} A[\Omega] \xrightarrow{\rho} A
$$
defined by $\iota(a)= ae_0$ and $\rho(\sum_{\delta\in \Omega} a_{\delta}e_\delta) =\sum_{\delta\in \Omega} a_{\delta}$. Clearly, $\rho\circ \iota=\id_A$ and $\ker(\rho)=\sum_{\delta,\delta'\in \Omega} A\cdot(e_\delta-e_{\delta'})$. Note that  $\iota$ is a homomorphism of $G$-graded rings but $\rho$ is not unless $\Omega$ is trivial. However, if $\Omega\subseteq \ker \varphi$ then $\rho: \varphi^* (A[\Omega]) \to \varphi^*(A)$ is a homomorphism of $H$-graded rings.


The \emph{dehomogenization functor} $\Xi_A^\varphi:\Mod^GA[\ker\varphi]\to \Mod^H\varphi^*(A)$ is defined to be the composition of functors $\Mod^G A[\ker\varphi]  \xrightarrow{\varphi^*_{A[\ker\varphi]}} \Mod^H\varphi^*(A[\ker\varphi]) \xrightarrow{\varphi^*(A)\otimes_{\varphi^*(A[\ker\varphi])}-} \Mod^H\varphi^*(A).$
It is easy to check that, up to natural isomorphisms, there is a commutative diagram of abelian categories
$$
\xymatrix{
& \Mod^GA \ar@/_/[dl]|-{\iota^*} \ar@/^/[dl]|-{\iota^!} \ar@/^/[dr]|-{\varphi^*_A} \ar@/_/[dr]|-{\varphi^!_A}  & \\
\Mod^G A[\ker\varphi]  \ar[rr]|-{\Xi_A^\varphi} \ar[ur]|-{\iota_*}  & & \Mod^H\varphi^*(A) \ar[ul]|-{\varphi_*^A},
}
$$
where $\iota_*$ is the scalar restriction functor, $\iota^* = A[\ker\varphi] \otimes_A-$ and $\iota^!=\uHom_{A}^G(A[\ker\varphi],-)$.

\begin{lemma}\label{ext-regrading-surjective}
Let $A$ be a $G$-graded ring and $\varphi:G\to H$ a surjective group homomorphism. Then the dehomogenization functor $\Xi_A^\varphi:\Mod^GA[\ker\varphi]\to \Mod^H\varphi^*(A)$ is an equivalence. Moreover,
$$\uExt_{\varphi^*(A)}^{i,H}(\Xi_A^\varphi(M), \varphi^*(A)) \cong \Xi_{A^o}^\varphi(\uExt_{A[\ker\varphi]}^{i,G}(M,A[\ker \varphi]))$$
in $\Mod^H\varphi^*(A)^o$ for every integer $i\in \Z$ and every module $M\in \Mod^GA[\ker \varphi]$.
\end{lemma}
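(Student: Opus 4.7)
The plan is to construct an explicit quasi-inverse of $\Xi_A^\varphi$ and then bootstrap the resulting equivalence to the Ext statement via projective resolutions. Throughout, write $K := \ker\varphi$.

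The natural candidate for the quasi-inverse is the functor $\Psi_A^\varphi:\Mod^H\varphi^*(A)\to\Mod^GA[K]$ sending $N$ to $\varphi_*^A(N)=\bigoplus_{\gamma\in G}N_{\varphi(\gamma)}u_\gamma$ equipped with the additional $A[K]$-action $e_\delta\cdot(nu_\gamma):=nu_{\gamma+\delta}$ for $\delta\in K$. Recalling that $\varphi^*(A)\cong\varphi^*(A[K])/\varphi^*(\ker\rho)$ with $\ker\rho$ generated as a two-sided ideal by the elements $e_\delta-e_0$ ($\delta\in K$), the functor $\Xi_A^\varphi$ concretely forces every $e_\delta$ to act as the identity. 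Applied to $\Psi_A^\varphi(N)$, this collapses the $K$-indexed family of placeholders $\{u_\gamma\}_{\gamma\in\varphi^{-1}(\delta)}$ to a single copy of $N_\delta$, yielding $\Xi_A^\varphi\Psi_A^\varphi\cong\id$. For the opposite direction, the key observation is that $K$ is a group, so each $e_\delta$ is invertible in $A[K]$ with inverse $e_{-\delta}$; hence $\lambda_{e_\delta}:M_\gamma\to M_{\gamma+\delta}$ is an isomorphism for every $M\in\Mod^GA[K]$. Since $(\Xi_A^\varphi(M))_{\varphi(\gamma)}$ is a filtered colimit of the $M_{\gamma'}$ along these isomorphisms, the natural map $M_\gamma\to(\Xi_A^\varphi(M))_{\varphi(\gamma)}$ is itself an isomorphism, and these assemble into a natural isomorphism $M\cong\Psi_A^\varphi\Xi_A^\varphi(M)$.

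For the Ext statement, I would take a projective resolution $P^\bullet\to M$ in $\Mod^GA[K]$. Since $\Xi_A^\varphi$ is an exact equivalence, $\Xi_A^\varphi(P^\bullet)\to\Xi_A^\varphi(M)$ is a projective resolution in $\Mod^H\varphi^*(A)$. Using that $\Xi_{A^o}^\varphi$ is likewise exact (being an equivalence on the opposite side by symmetry), the claim reduces to producing, functorially in each projective $P\in\Mod^GA[K]$, a natural isomorphism
$$\Xi_{A^o}^\varphi\bigl(\uHom_{A[K]}^G(P,A[K])\bigr)\cong \uHom_{\varphi^*(A)}^H(\Xi_A^\varphi(P),\varphi^*(A))$$
in $\Mod^H\varphi^*(A)^o$. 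Additivity reduces this further to $P=\Sigma_\gamma A[K]$, in which case both sides compute directly to $\Sigma_{-\varphi(\gamma)}\varphi^*(A)$.

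The main obstacle I anticipate is verifying that this identification is natural \emph{as a morphism of right modules}, i.e.\ that under $\Xi_A^\varphi$ right multiplication on $A[K]$ by an element $x$ translates to right multiplication on $\varphi^*(A)$ by $\rho(x)$. Once this compatibility is recorded on the identity endomorphism $\id_{A[K]}$ and extended by additivity to all projectives of $\Mod^GA[K]$, the natural isomorphism propagates through $\uHom$ applied to the resolution $P^\bullet$, and passing to cohomology yields the desired isomorphism in every degree $i$.
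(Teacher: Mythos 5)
Your proof of the equivalence is sound, and it takes a mildly different route from the paper's. The paper shows directly that the canonical map $\nu_M: M\to \Xi_A^\varphi(M)$ restricts to bijections $M_\alpha\to \Xi_A^\varphi(M)_{\varphi(\alpha)}$ (injectivity via an explicit retraction built from a choice of coset representatives) and then checks full faithfulness and essential surjectivity; you instead exhibit $\varphi_*^A(N)$ with the placeholder-shifting $e_\delta$-action as an explicit quasi-inverse. Both arguments pivot on the same key fact: since $\ker\varphi$ is a group, each $e_\delta$ is invertible, so every transition map in the diagram computing $\Xi_A^\varphi(M)_{\varphi(\gamma)}$ is an isomorphism; your colimit phrasing is fine once you observe that the index diagram on the coset $\gamma+\ker\varphi$ has exactly one (invertible) morphism between any two vertices, so the canonical map from any single $M_\gamma$ to the colimit is bijective. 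The quasi-inverse buys you essential surjectivity for free; the paper's route produces the degreewise bijections $\nu_{M,\alpha}$ in a form that is then reused verbatim in the Ext statement.

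The Ext half, as written, has a gap at ``extended by additivity to all projectives.'' The statement is for arbitrary $M\in\Mod^GA[\ker\varphi]$, with no noetherian or finiteness hypothesis, so the terms of $P^\bullet$ are direct summands of possibly infinite direct sums of shifts of $A[\ker\varphi]$; an isomorphism specified only on the generators $\Sigma_\gamma A[\ker\varphi]$ does not extend ``by additivity'' to such projectives (decompositions are not canonical, $\uHom_{A[\ker\varphi]}^G(-,A[\ker\varphi])$ turns infinite sums into degreewise products, and, crucially, you need the comparison to be natural with respect to the differentials of $P^\bullet$, which are arbitrary maps between projectives). The repair — and this is what the paper does with its map $\omega_M$ — is to define the comparison once for every module: send a degree-$\gamma$ homomorphism $f:M\to\Sigma_\gamma A[\ker\varphi]$ to $\Xi_A^\varphi(f)$ composed with the identification $\Xi_A^\varphi(\Sigma_\gamma A[\ker\varphi])\cong\Sigma_{\varphi(\gamma)}\varphi^*(A)$ (this is exactly where your bimodule compatibility $\Xi_A^\varphi(A[\ker\varphi])\cong\varphi^*(A)$, i.e.\ right multiplication by $x$ becoming right multiplication by $\rho(x)$, is used); this assignment kills $(\ker\rho)\cdot\uHom_{A[\ker\varphi]}^G(M,A[\ker\varphi])$, hence descends to a natural morphism $\Xi_{A^o}^\varphi(\uHom_{A[\ker\varphi]}^G(M,A[\ker\varphi]))\to\uHom_{\varphi^*(A)}^H(\Xi_A^\varphi(M),\varphi^*(A))$ of $H$-graded right $\varphi^*(A)$-modules, which is an isomorphism for \emph{every} $M$ by full faithfulness of $\Xi_A^\varphi$ — no restriction to projectives is needed. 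With this functor isomorphism in hand, the passage to $\uExt^{i}$ is immediate, since $\Xi_A^\varphi$ and $\Xi_{A^o}^\varphi$ are exact: apply it to a projective resolution and take cohomology, which is your final step made rigorous.
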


The first statement is proved in \cite[Section 6.4]{NV2}. We give below an alternative demonstration.

\begin{proof}To save notations, let $\Omega:=\ker\varphi$. For each module $M\in \Mod^GA[\Omega]$, let $\nu_M:M\to \Xi_A^\varphi(M)$ be the natural map given by $m\mapsto 1\otimes m.$ We claim that for each $\alpha\in G$ the restriction map $\nu_{M,\alpha}:M_\alpha\to \Xi_A^\varphi(M)_{\varphi(\alpha)}$ is bijective. First note that for each $y =\sum_{\gamma\in \alpha +\Omega} y_\gamma \in \sum_{\gamma\in \alpha+\Omega} M_\gamma$ one has
$$
\sum\nolimits_{\gamma\in \alpha+\Omega} e_{\alpha-\gamma}\cdot y_{\gamma}\in M_\alpha \, \text{ and }\, \nu_M(\sum\nolimits_{\gamma\in \alpha+\Omega} e_{\alpha-\gamma}\cdot y_{\gamma})=\nu_M(y).
$$
It follows that $\nu_{M,\alpha}$ is surjective because
$\nu_M(\sum_{\gamma\in \alpha+\Omega} M_\gamma)=\Xi_A^\varphi(M)_{\varphi(\alpha)}$. Now fix a family $(\gamma_\delta)_{\delta\in H}$ in $G$ with $\varphi(\gamma_\delta)=\delta$ and then define an additive map $f_M:M\to M$ by
$$
y=\sum\nolimits_{\gamma\in G} y_\gamma \, \mapsto \, \sum\nolimits_{\delta\in H} \sum\nolimits_{\gamma\in \gamma_\delta+\Omega} e_{\gamma_\delta-\gamma}\cdot y_\gamma.
$$
It is easy to check that $\nu_M\circ f_M=\nu_M$ and $f_M((e_\beta-e_{\beta'})M_\gamma) =0$ for any $\beta,\, \beta'\in \Omega$ and $\gamma\in G$. Since
$$\ker(\nu_M) = \ker(\rho)\cdot M=\sum\nolimits_{\beta,\,\beta'\in \Omega}(e_\beta-e_{\beta'})\cdot M,$$
$\ker(\nu_M)= \ker(f_M)$ and hence $\ker(\nu_{M,\alpha})=\ker(\nu_M)\cap M_\alpha = \ker(f_M)\cap M_\alpha=0$. Thus  $\nu_{M,\alpha}$ is injective.

Next we turn to show  $\Xi_A^\varphi$ is fully faithful, that is, to show the structure map
$$\Xi_A^\varphi:\Hom_{\Mod^GA[\Omega]}(M,N)\to \Hom_{\Mod^H\varphi^*(A)}(\Xi_A^\varphi(M),\Xi_A^\varphi(N))$$
is bijective for any modules $M,N\in \Mod^GA[\Omega]$. Let $f\in \ker(\Xi_A^\varphi)$. Then $\nu_N\circ f = \Xi_A^\varphi(f)\circ \nu_M = 0$. So $\nu_{N,\alpha}\circ f_\alpha=0$ for each $\alpha\in G$ and hence $f = 0$ (here, $f_\alpha$ denotes the $\alpha$-th piece of $f$). Thererfore $\Xi_A^\varphi$ is injective.  For any $g\in \Hom_{\Mod^H\varphi^*(A)}(\Xi_A^\varphi(M),\Xi_A^\varphi(N))$, define a map $\tilde{g}:M\to N$ by $$y=\sum\nolimits_{\gamma \in G} y_\gamma \, \mapsto \, \sum\nolimits_{\gamma\in G} (\nu_{N,\gamma})^{-1} (g (\nu_M (y_\gamma))).$$  It is easy to check that   $\tilde{g}\in \Hom_{\Mod^GA[\Omega]}(M,N)$ and $\Xi_A^\varphi(\tilde{g})=g$. Thus $\Xi_A^\varphi$ is also surjective.

To see $\Xi_A^\varphi$ is an equivalence, it remains to show every module $M\in \Mod^H\varphi^*(A)$ is isomorphic to some object in the image of $\Xi_A^\varphi$. Clearly, every free module in $\Mod^H\varphi^*(A)$ is isomorphic to the image of some free module in $\Mod^GA[\Omega]$ under $\Xi_A^\varphi$. Since $\Xi_A^\varphi$ is fully faithful, one may choose an exact sequence $\Xi_A^\varphi(L') \xrightarrow{\Xi_A^\varphi(f)} \Xi_A^\varphi(L) \to M\to 0$ in $\Mod^H\varphi^*(A)$ with $L,L'$ being free in $\Mod^GA[\Omega]$. Note that $\Xi_A^\varphi$ is right exact, one arrives at $\Xi_A^\varphi(\coker(f))\cong M$.

Finally, we justify the second statement. Note that $\Xi_A^\varphi(A[\Omega]) \cong \varphi^*(A)$ as $H$-graded $\varphi^*(A)$-bimodules. Fix a family $(\gamma_\delta)_{\delta\in H}$ in $G$ with $\varphi(\gamma_\delta)=\delta$ and then define  for each module $M\in\Mod^GA[\Omega]$ a map  $\omega_M:\Xi_{A^o}^\varphi(\uHom_{A[\Omega]}^{G}(M,A[\Omega])) \to \uHom_{\varphi^*(A)}^{H}(\Xi_A^\varphi(M), \Xi_A^\varphi(A[\Omega]))$  by
$$
\sum\nolimits_{\delta\in H} f_\delta \mapsto \sum\nolimits_{\delta\in H}\Xi_{A}^\varphi (v_{\uHom_{A[\Omega]}^G(M,A[\Omega]),\gamma_\delta}^{-1}(f_\delta)).
$$
Clearly,  $\omega_M$ is an isomorphism in $\Mod^H\varphi^*(A)^o$ and it is natural in $M$. It follows that
$$
\Xi_{A^o}^{\varphi} \circ \uHom_{A[\Omega]}^G(-,A[\Omega]) \cong \uHom_{\varphi^*(A)}^H(-,\varphi^*(A)) \circ \Xi_A^{\varphi}
$$
as functors from $\Mod^GA[\Omega]$ to $\Mod^H\varphi^*(A)^o$. By standard homological algebra, the result follows.
\end{proof}

\section{On injective and global dimensions}
\label{thesection-3}

Throughout, we reserve $R$ to stand for a $G$-graded ring on which a regular normal non-invertible homogeneous element $\hbar$ of degree $\varepsilon$ is specified. We write $\bar{R}:= R/\hbar R$, the quotient ring of $R$, and $\hat{R}:=R[\hbar^{-1}]$, the localization ring of $R$ at the Ore set $\{\, \hbar^i\,\}_{i\geq0}$, and consider them both $G$-graded in the natural way. Also, we define $\tau_\hbar\in \Aut_G(R)$  by $\hbar\cdot a ={{\tau_\hbar}}(a)\cdot\hbar$ for all $a\in R$. This section focus on the relations between the injective dimensions (resp. global dimensions) of $R$, $\bar{R}$ and $\hat{R}$. 

Consider a module $M\in \Mod^GR$. The \emph{$\hbar$-torsion submodule} of $M$ is defined by
$$
T_\hbar(M) := \{\ x\in M\ |\ \hbar^nx=0\ \text{ for some }n\geq0\ \},
$$
which is clear a homogeneous submodule of $M$. To save notations, we also  write $F_\hbar(M) : = M/T_\hbar(M)$. We say $M$ is \emph{$\hbar$-torsionfree} if $T_\hbar(M)=0$. Besides, $M$ is called {\em $\hbar$-discrete} if for any $\gamma\in G$ there is an integer $n\geq0$ such that $(\hbar^{n}\cdot M)_\gamma =0$. Similar definitions apply to modules in $\Mod^GR^o$.

We start by establishing several technical lemmas on $\uExt$-groups.

\begin{lemma}\label{Rees-lemma-general}
Assume $N\in \Mod^GR$ is $\hbar$-torsionfree. Then there is a natural isomorphism
$$
\uExt_{\bar{R}}^{i,G}(L,\bar{R}\otimes_RN) \cong \Sigma_{-\varepsilon}\uExt_R^{i+1,G}({}^{\tau_\hbar} L,N)
$$
in $\Mod^G\Z$ for every integer $i\in \Z$ and every module $L\in \Mod^G\bar{R}$.
\end{lemma}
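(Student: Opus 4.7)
The strategy is to combine a length-one Rees resolution of $\bar{R}$ over $R$ with the Grothendieck change-of-rings spectral sequence for the composition $\uHom_{\bar{R}}^G(L,-)\circ \uHom_R^G(\bar{R},-)\cong \uHom_R^G(L,-)$. Every $\tau_\hbar$ appearing in the statement will trace back to the normality relation $\hbar r=\tau_\hbar(r)\hbar$.

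First I would set up the resolution: since $\hbar$ is regular and normal, left multiplication by $\hbar$ is injective and $R$-linear when viewed as a morphism $\Sigma_{-\varepsilon}{}^{\tau_\hbar^{-1}} R\xrightarrow{\lambda_\hbar} R$ with cokernel $\bar{R}$. Applying $\uHom_R^G(-,N)$ and using the natural identifications $\uHom_R^G(R,N)\cong N$ and $\uHom_R^G({}^{\tau_\hbar^{-1}} R,N)\cong {}^{\tau_\hbar} N$ (a map $f\colon {}^{\tau_\hbar^{-1}} R\to N$ is $R$-linear iff $f(r)=\tau_\hbar(r)f(1)$), the induced map on Hom becomes $\lambda_\hbar\colon N\to \Sigma_\varepsilon {}^{\tau_\hbar} N$. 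The $\hbar$-torsionfreeness of $N$ then yields
$$\uHom_R^G(\bar{R},N)=0, \qquad \uExt_R^{1,G}(\bar{R},N)\cong \Sigma_\varepsilon{}^{\tau_\hbar}(\bar{R}\otimes_R N),$$
with all higher Ext groups vanishing since $\bar{R}$ has $R$-projective dimension at most one.

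Next I would invoke the spectral sequence. The functor $\uHom_R^G(\bar{R},-)$ is right adjoint to the exact restriction functor $\Mod^G\bar{R}\hookrightarrow \Mod^GR$ and so preserves injectives, and for $L\in\Mod^G\bar{R}$ one has $\uHom_{\bar{R}}^G(L,\uHom_R^G(\bar{R},-))\cong \uHom_R^G(L,-)$ via the tensor-hom adjunction together with $\bar{R}\otimes_{\bar{R}}L\cong L$. Hence the Grothendieck spectral sequence
$$E_2^{p,q}=\uExt_{\bar{R}}^{p,G}(L,\uExt_R^{q,G}(\bar{R},N))\ \Longrightarrow\ \uExt_R^{p+q,G}(L,N)$$
is concentrated on the row $q=1$ by the Ext computation above, and thus degenerates into $\uExt_{\bar{R}}^{p,G}(L,\Sigma_\varepsilon{}^{\tau_\hbar}(\bar{R}\otimes_R N))\cong \uExt_R^{p+1,G}(L,N)$. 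Pulling the shift out, moving the twist across via $\uExt_{\bar{R}}^{p,G}(L,{}^{\tau_\hbar}M)\cong \uExt_{\bar{R}}^{p,G}({}^{\tau_\hbar^{-1}}L,M)$, and substituting $L\mapsto {}^{\tau_\hbar} L$, recovers exactly the claimed formula.

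The main obstacle is bookkeeping the $\tau_\hbar$-twist: it enters both in the source of $\lambda_\hbar$ and in the identification of $\uHom_R^G({}^{\tau_\hbar^{-1}} R,N)$ with ${}^{\tau_\hbar}N$, and the two contributions must combine so that the twist ends up on the $L$-side of the final formula while the isomorphism remains natural in $L$ and uniform in $i\in\Z$.
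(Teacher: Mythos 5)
Your proposal is correct and follows essentially the same route as the paper: both collapse the change-of-rings (Grothendieck) spectral sequence for $\uHom_{\bar R}^G(L,\uHom_R^G(\bar R,-))\cong\uHom_R^G(L,-)$, computing $\uExt_R^{q,G}(\bar R,-)$ from the length-one resolution $0\to\Sigma_{-\varepsilon}{}^{\tau_\hbar^{-1}}R\xrightarrow{\lambda_\hbar}R\to\bar R\to 0$ and using $\hbar$-torsionfreeness to kill $q=0$. The only difference is bookkeeping: the paper pre-twists the coefficient module to $\Sigma_{-\varepsilon}{}^{\tau_\hbar^{-1}}N$ so the $E_2$-coefficients come out untwisted, whereas you apply the sequence to $N$ and transfer the twist to $L$ at the end; these are equivalent.
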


\begin{proof}
It follows from the collapsing of the following spectral sequence in $\Mod^G\Z$:
$$
\uExt_{\bar{R}}^{p,G}(L,\uExt_R^{q,G}(\bar{R}, \Sigma_{-\varepsilon}{}^{\tau_\hbar^{-1}}N)) \Longrightarrow \uExt_R^{n,G}(L,\Sigma_{-\varepsilon}{}^{\tau_\hbar^{-1}}N)) \cong \Sigma_{-\varepsilon} \uExt_R^{n,G}({}^{\tau_\hbar}L,N).
$$
Note that $\uExt_{R}^{q,G}(\bar{R},\Sigma_{-\varepsilon}{}^{\tau_\hbar^{-1}}N) = 0$ for
$q\neq 1$ and $\uExt_{R}^{1,G}(\bar{R},\Sigma_{-\varepsilon}{}^{\tau_\hbar^{-1}}N) \cong \bar{R}\otimes_RN$ in $\Mod^G\bar{R}$.
\end{proof}

\begin{lemma}\label{long-exact-sequence-general}
Assume $M\in\Mod^GR$ is $\hbar$-torsionfree. Then there is a long exact sequence
$$
\cdots \to \uExt_{R}^{i,G}(\bar{R}\otimes_R M,N)   \to \uExt_R^{i,G}(M,N) \xrightarrow{(\lambda_{\hbar})_* } \uExt_R^{i,G}(M,\Sigma_{\varepsilon}{}^{{{\tau_\hbar}}}N) \to \uExt_{R}^{i+1,G}(\bar{R}\otimes_R M, N)  \to   \cdots
$$
in $\Mod^G\Z$ for every integer $i\in \Z$ and every module $N\in \Mod^GR$.
\end{lemma}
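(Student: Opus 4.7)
The plan is to reduce the statement to the long $\uExt$-sequence attached to a suitable short exact sequence, so the heart of the matter is to produce that short exact sequence and to identify the middle term of the desired long exact sequence correctly.

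First, I will check that left multiplication by $\hbar$ defines a morphism
$$
\lambda_\hbar:\ \Sigma_{-\varepsilon}{}^{\tau_\hbar^{-1}}M\ \longrightarrow\ M
$$
in $\Mod^GR$. The degree shift is forced because $\hbar$ is homogeneous of degree $\varepsilon$, and $R$-linearity holds because, for $a\in R$, the new left action on ${}^{\tau_\hbar^{-1}}M$ is $a\ast m=\tau_\hbar^{-1}(a)m$, so $\lambda_\hbar(a\ast m)=\hbar\tau_\hbar^{-1}(a)m=a\hbar m=a\lambda_\hbar(m)$ using $\hbar b=\tau_\hbar(b)\hbar$. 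Since $\hbar$ is regular on $R$, and $M$ is assumed $\hbar$-torsionfree, $\lambda_\hbar$ is injective, and its cokernel is $M/\hbar M\cong \bar R\otimes_RM$ in $\Mod^GR$. Thus I obtain a short exact sequence
$$
0\longrightarrow\Sigma_{-\varepsilon}{}^{\tau_\hbar^{-1}}M\ \xrightarrow{\ \lambda_\hbar\ }\ M\longrightarrow\bar R\otimes_RM\longrightarrow0
$$
in $\Mod^GR$.

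Next, I apply $\uHom_R^G(-,N)$ and feed the resulting exact complex into the standard long exact sequence of $\uExt$-groups in $\Mod^G\Z$. This yields
$$
\cdots\to\uExt_R^{i,G}(\bar R\otimes_RM,N)\to\uExt_R^{i,G}(M,N)\xrightarrow{\lambda_\hbar^{\ast}}\uExt_R^{i,G}(\Sigma_{-\varepsilon}{}^{\tau_\hbar^{-1}}M,N)\to\cdots,
$$
so the only remaining task is to identify the third term with $\uExt_R^{i,G}(M,\Sigma_\varepsilon{}^{\tau_\hbar}N)$ and to verify that, under that identification, the connecting map becomes $(\lambda_\hbar)_\ast$.

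For the identification of the groups, I will combine the two standard natural isomorphisms
$$
\uHom_R^G(\Sigma_{-\varepsilon}X,N)\cong\uHom_R^G(X,\Sigma_\varepsilon N),\qquad \uHom_R^G({}^{\tau_\hbar^{-1}}X,N)\cong\uHom_R^G(X,{}^{\tau_\hbar}N),
$$
which commute with one another, so they assemble into a natural isomorphism on the level of modules, extend to the derived functor $\uExt_R^{i,G}$ by standard homological algebra (applied to a projective resolution of $M$), and produce the required natural isomorphism $\uExt_R^{i,G}(\Sigma_{-\varepsilon}{}^{\tau_\hbar^{-1}}M,N)\cong\uExt_R^{i,G}(M,\Sigma_\varepsilon{}^{\tau_\hbar}N)$.

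The step that needs a little care, and is really the only substantive verification, is tracking what $\lambda_\hbar^\ast$ becomes under the identification above. At the level of $\uHom$, the composite isomorphism sends a degree-preserving morphism $f:M\to N$ to the morphism $M\to\Sigma_\varepsilon{}^{\tau_\hbar}N$ given by $m\mapsto\hbar f(m)$: because $\lambda_\hbar$, viewed this time as a map $N\to\Sigma_\varepsilon{}^{\tau_\hbar}N$, is $R$-linear by the computation $\tau_\hbar(a)\cdot\hbar n=\hbar an$, this is exactly postcomposition with $\lambda_\hbar$. So $\lambda_\hbar^\ast$ corresponds to $(\lambda_\hbar)_\ast$ in degree zero, and hence in every degree by naturality of the derived functor machinery. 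Splicing everything together produces the long exact sequence in the statement.
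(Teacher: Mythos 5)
Your proposal is correct and is essentially the paper's own argument: the paper likewise takes the long $\uExt$-sequence of $0\to\Sigma_{-\varepsilon}{}^{\tau_\hbar^{-1}}M\xrightarrow{\lambda_\hbar}M\to\bar R\otimes_RM\to0$ and rewrites its third term via the natural isomorphism $\uExt_R^{i,G}(\Sigma_{-\varepsilon}{}^{\tau_\hbar^{-1}}M,N)\cong\uExt_R^{i,G}(M,\Sigma_\varepsilon{}^{\tau_\hbar}N)$, under which $(\lambda_\hbar)^*$ becomes $(\lambda_\hbar)_*$ (presented there as a commutative diagram of the two long sequences). Your verification of this identification, which is the only delicate point, is accurate.
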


\begin{proof}
This can be read from the following commutative diagram in $\Mod^G\Z$:
{\small$$
\begin{array}{ccccccccccc}
\cdots \!\! & \!\! \to\!\!  & \!\!  \uExt_R^{i,G}(\bar{R}\otimes_R M,N)\!\!  & \!\! \to \!\!  & \!\! \uExt_R^{i,G}(M, N) \!\! & \!\! \xrightarrow{(\lambda_\hbar)^*} \!\!  & \!\!  \uExt_R^{i,G}(\Sigma_{-\varepsilon} {}^{{{\tau_\hbar}}^{-1}}\! M,N) \!\!  & \!\!  \to \!\!  & \!\!  \uExt_R^{i+1,G}(\bar{R}\otimes_R M,N) \!\! & \!\!  \to \!\!   & \!\!  \cdots \\
&&\!\! \downarrow = \!\!& &\!\! \downarrow = \!\!&&\!\! \downarrow \cong \!\!&&\!\! \downarrow = \!\!&& \\
\cdots\!\! & \!\! \to\!\!  & \!\!  \uExt_R^{i,G}(\bar{R}\otimes_R M,N)\!\!  & \!\! \to \!\!  & \!\! \uExt_R^{i,G}(M, N) \!\! & \!\! \xrightarrow{(\lambda_\hbar)_*} \!\!  & \!\!  \uExt_R^{i,G}(M,\Sigma_\varepsilon {}^{{{\tau_\hbar}}}\! N) \!\!  & \!\!  \to \!\!  & \!\!  \uExt_R^{i+1,G}(\bar{R}\otimes_R M,N) \!\! & \!\!  \to \!\!   & \!\!  \cdots,
\end{array}
$$}
where the top row is the long $\uExt$-sequence associated to $0 \to  \Sigma_{-\varepsilon} {}^{{{\tau_\hbar}}^{-1}}\! M \xrightarrow{\lambda_\hbar} M  \to  \bar{R}\otimes_R  M  \to 0$.
\end{proof}

\begin{lemma}\label{localization-ext-general}
Assume $R$ is left $G$-noetherian and $M\in\smod^GR$. Then $\uExt_{\hat{R}}^{i,G}(\hat{R}\otimes_RM, \hat{R}\otimes_RN)$ is naturally isomorphic to the colimit of the direct system
$$
\uExt_{R}^{i,G}(M, N)\xrightarrow{(\lambda_{\hbar})_*} \uExt_{R}^{i,G}(M,\Sigma_\varepsilon {}^{{{\tau_\hbar}}}\!N) \xrightarrow{(\lambda_{\hbar})_*} \uExt_{R}^{i,G}(M,\Sigma_{2\varepsilon}{}^{{{\tau_\hbar}}^2}\!N) \to \cdots
$$
in $\Mod^G\Z$ for every integer $i\in \Z$ and every module $N\in \Mod^GR$.
\end{lemma}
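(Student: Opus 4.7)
The plan is to compute the left-hand side via a projective resolution of $M$ in $\Mod^GR$ and then to identify $\hat{R}\otimes_R N$ as a filtered colimit of shifted and twisted copies of $N$ inside $\Mod^GR$, after which the statement drops out by standard manipulations with colimits.

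The first step is to establish the bimodule identification
$$\hat{R}\;\cong\;\colim\bigl(R\xrightarrow{\lambda_\hbar}\Sigma_\varepsilon{}^{\tau_\hbar}R\xrightarrow{\lambda_\hbar}\Sigma_{2\varepsilon}{}^{\tau_\hbar^2}R\to\cdots\bigr)$$
in the category of graded $R$-bimodules. I would realize this by identifying the $k$-th term with the sub-bimodule $\hbar^{-k}R\subseteq \hat{R}$ via $r\mapsto \hbar^{-k}r$: the relation $\hbar^{-k}r=\hbar^{-(k+1)}(\hbar r)$ gives the transition map $\lambda_\hbar$, and the identity $a\hbar^{-k}=\hbar^{-k}\tau_\hbar^k(a)$, obtained by iterating the commutation rule $\hbar a = \tau_\hbar(a)\hbar$, accounts for the twist by $\tau_\hbar^k$; the shift $\Sigma_{k\varepsilon}$ records that $\hbar^{-k}r$ has degree $\deg(r)-k\varepsilon$. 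Tensoring this colimit presentation with $N$ on the right and using that $-\otimes_R N$ commutes with colimits then yields, naturally in $N$,
$$\hat{R}\otimes_R N\;\cong\;\colim_k \Sigma_{k\varepsilon}{}^{\tau_\hbar^k}N$$
in $\Mod^G\hat{R}$, with transition maps again $\lambda_\hbar$.

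Next, I would choose a projective resolution $P^\bullet\to M$ in $\Mod^GR$ with each $P^j$ finitely generated, which is available since $R$ is left $G$-noetherian and $M\in\smod^GR$. Because $\hat{R}$ is flat as a right $R$-module (being an Ore localization), $\hat{R}\otimes_R P^\bullet\to\hat{R}\otimes_R M$ is a projective resolution in $\Mod^G\hat{R}$, and the tensor-hom adjunction supplies a natural isomorphism of complexes
$$\uHom_{\hat{R}}^G(\hat{R}\otimes_R P^\bullet,\hat{R}\otimes_R N)\;\cong\;\uHom_R^G(P^\bullet,\hat{R}\otimes_R N)$$
in $\Mod^G\Z$. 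Finite generation of each $P^j$ makes $\uHom_R^G(P^j,-)$ commute with filtered colimits (each graded component $\Hom_{\Mod^GR}(P^j,\Sigma_\gamma(-))$ does, and direct sums commute with colimits), so by the previous step the right-hand side is naturally isomorphic to $\colim_k\uHom_R^G(P^\bullet,\Sigma_{k\varepsilon}{}^{\tau_\hbar^k}N)$, with transition maps precisely the post-compositions $(\lambda_\hbar)_*$.

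The final step is to pass to the $i$-th cohomology and invoke the fact that $H^i$ commutes with filtered colimits in the Grothendieck category $\Mod^G\Z$ (AB5); this gives the claimed natural isomorphism. The only piece of bookkeeping that requires real care is the first step, where the shift $\Sigma_{k\varepsilon}$ and the twist ${}^{\tau_\hbar^k}$ must be read off correctly from the commutation rule for $\hbar$; everything afterwards is a routine combination of flatness of $\hat{R}$, the tensor-hom adjunction, preservation of filtered colimits by finitely presented objects, and AB5 for $\Mod^G\Z$, so no substantive homological obstacle remains.
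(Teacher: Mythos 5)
Your proposal is correct and follows essentially the same route as the paper: the paper's proof is exactly the combination of the change-of-rings isomorphism $\uExt^{i,G}_{\hat{R}}(\hat{R}\otimes_RM,\hat{R}\otimes_RN)\cong\uExt^{i,G}_R(M,\hat{R}\otimes_RN)$ (flat localization plus adjunction on a finitely generated projective resolution), the identification of $\hat{R}\otimes_RN$ with $\colim_k\Sigma_{k\varepsilon}{}^{\tau_\hbar^k}N$, and the fact that $\uExt^{i,G}_R(M,-)$ preserves filtered colimits. You have merely spelled out the bookkeeping (the twist $a\hbar^{-k}=\hbar^{-k}\tau_\hbar^k(a)$ and the AB5 step) that the paper leaves implicit.
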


\begin{proof}
We have: $\uExt^{i,G}_{\hat{R}}(\hat{R}\otimes_RM, \hat{R}\otimes_RN) \cong \uExt_{R}^{i,G}(M,\hat{R}\otimes_RN)$ in $\Mod^G\Z$; the functor $\uExt_R^{i,G}(M,-)$ preserves colimits; and the colimit of the direct system $N\xrightarrow{\lambda_\hbar} \Sigma_\varepsilon {}^{{{\tau_\hbar}}}\! N \xrightarrow{\lambda_\hbar}  \Sigma_{2\varepsilon} {}^{{{\tau_\hbar}}^2}\! N \to \cdots$ is naturally isomorphic to $\hat{R}\otimes_RN$ in $\Mod^GR$.  The result follows directly.
\end{proof}

\begin{proposition}\label{idim-cut-at-infinity-general}
Assume $R$ is left $G$-noetherian and $N\in \Mod^GR$ is $\hbar$-torsionfree. Then
$$\idim^G_RN = \max\{\  \idim^G_{\bar{R}}\bar{R}\otimes_RN+1,\ \idim^G_{\hat{R}}\hat{R}\otimes_RN\ \}.$$
\end{proposition}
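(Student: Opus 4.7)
The plan is to prove the two inequalities separately, using the three preceding lemmas to pass back and forth between $\uExt$-groups over $R$, $\bar{R}$ and $\hat{R}$.

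For the direction $\geq$: the Rees-type formula of Lemma \ref{Rees-lemma-general} turns any nonzero $\uExt_{\bar{R}}^{i,G}(L, \bar{R}\otimes_R N)$ with $L\in \smod^G\bar{R}$ into a nonzero $\uExt_R^{i+1,G}({}^{\tau_\hbar}L, N)$, yielding $\idim_R^G N \geq \idim_{\bar{R}}^G \bar{R}\otimes_R N + 1$ after taking the supremum. For the $\hat{R}$-side, first observe that every $\hat{L}\in\smod^G\hat{R}$ has the form $\hat{R}\otimes_R L$ for some $L\in\smod^GR$ (take $L$ to be the $R$-span of a chosen finite set of homogeneous generators and invoke flatness of $\hat{R}$ over $R$); Lemma \ref{localization-ext-general} then exhibits $\uExt_{\hat{R}}^{i,G}(\hat{L}, \hat{R}\otimes_R N)$ as a colimit of groups $\uExt_R^{i,G}(L, \Sigma_{k\varepsilon}{}^{\tau_\hbar^k}N)$, all of which vanish when $i > \idim_R^G N$.

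For the direction $\leq$: set $d = \max\{\idim_{\bar{R}}^G\bar{R}\otimes_R N + 1,\, \idim_{\hat{R}}^G\hat{R}\otimes_R N\}$. I would show $\uExt_R^{i,G}(M,N) = 0$ for every $M\in\smod^GR$ and every $i>d$, by a d\'evissage along $0\to T_\hbar(M)\to M\to F_\hbar(M)\to 0$. For the $\hbar$-torsionfree quotient $F_\hbar(M)$, Lemma \ref{Rees-lemma-general} (applied to a twist of $\bar{R}\otimes_R F_\hbar(M)$) makes both boundary terms $\uExt_R^{i,G}(\bar{R}\otimes_R F_\hbar(M), N)$ and $\uExt_R^{i+1,G}(\bar{R}\otimes_R F_\hbar(M), N)$ vanish for $i>d$, since then $i-1\geq d-1 \geq \idim_{\bar{R}}^G\bar{R}\otimes_R N$. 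Plugging this into Lemma \ref{long-exact-sequence-general} forces $(\lambda_\hbar)_*$ to be an isomorphism in that range. Iterating, the direct system of Lemma \ref{localization-ext-general} becomes a system of isomorphisms, whence $\uExt_R^{i,G}(F_\hbar(M), N) \cong \uExt_{\hat{R}}^{i,G}(\hat{R}\otimes_R F_\hbar(M), \hat{R}\otimes_R N) = 0$ for $i>d$.

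For the torsion piece $T_\hbar(M)$: since $R$ is left $G$-noetherian, $T_\hbar(M)$ is finitely generated and hence annihilated by some $\hbar^k$. The finite filtration $T_\hbar(M)\supseteq \hbar T_\hbar(M)\supseteq \cdots \supseteq \hbar^k T_\hbar(M)=0$ has successive quotients which are $G$-graded $\bar{R}$-modules, and Lemma \ref{Rees-lemma-general} yields $\uExt_R^{j,G}(\text{quotient}, N) \cong \Sigma_\varepsilon \uExt_{\bar{R}}^{j-1,G}(\text{twist}, \bar{R}\otimes_R N) = 0$ for $j>d$. A routine filtration induction via the long $\uExt$-sequence extends this to $T_\hbar(M)$. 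Combining both cases through the long $\uExt$-sequence of $0\to T_\hbar(M)\to M\to F_\hbar(M)\to 0$ completes the proof. The main obstacle, I expect, is orchestrating Lemmas \ref{Rees-lemma-general}, \ref{long-exact-sequence-general} and \ref{localization-ext-general} so that the boundary terms arising from $\bar{R}\otimes_R F_\hbar(M)$ vanish in \emph{exactly} the range $i>d$ needed to convert Lemma \ref{localization-ext-general}'s direct system into a system of isomorphisms; the tracking of the shifts $\Sigma_{k\varepsilon}$ and twists ${}^{\tau_\hbar^k}$ adds bookkeeping but no conceptual difficulty.
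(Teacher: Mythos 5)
Your proof is correct and takes essentially the same route as the paper: the same d\'evissage along $0\to T_\hbar(M)\to M\to F_\hbar(M)\to 0$, with Lemma \ref{Rees-lemma-general} disposing of the $\hbar$-torsion part and Lemmas \ref{long-exact-sequence-general} and \ref{localization-ext-general} (after the same shift/twist bookkeeping) showing the maps in the direct system are isomorphisms above degree $d$, so that $\uExt_R^{d+1,G}(F_\hbar(M),N)$ is computed over $\hat{R}$ and vanishes. The only divergence is in the lower bound $\idim_{\hat R}^G\hat R\otimes_RN\leq \idim_R^GN$, for which the paper cites \cite[Theorem 1.3]{GJ}, whereas you give a short self-contained argument via Lemma \ref{localization-ext-general} and the fact that every finitely generated graded $\hat R$-module is of the form $\hat R\otimes_RL$ with $L\in\smod^GR$ — a legitimate substitute.
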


\begin{proof}
Let $d=\max\{\  \idim^G_{\bar{R}}\bar{R}\otimes_RN+1,\ \idim^G_{\hat{R}}\hat{R}\otimes_RN\ \}$. By Lemma \ref{Rees-lemma-general} and \cite[Theorem 1.3]{GJ}, we have $\idim_R^GN\geq d$. Thus we may assume $d<\infty$ and are left to show $\uExt_R^{d+1,G}(M,N)=0$ for every module $M\in \smod^GR$. Note that $T_\hbar(M)$ is finitely generated, so $\hbar^nT_\hbar(M)=0$ for some integer $n\geq1$. By Lemma \ref{Rees-lemma-general}, $\uExt_R^{d+1,G}(\hbar^iT_\hbar(M)/\hbar^{i+1}T_\hbar(M),N) =0$ for every integer $i\geq 0$, and thereof $$\uExt_R^{d+1,G}(T_\hbar(M), N)=0.$$
Lemma \ref{Rees-lemma-general} also tells us that $\uExt_R^{d+1,G}(\bar{R}\otimes_RF_\hbar(M),\Sigma_{i\varepsilon}{}^{{{\tau_\hbar^i}}}N) =  \uExt_R^{d+2,G}(\bar{R}\otimes_RF_\hbar(M), \Sigma_{i\varepsilon}{}^{{{\tau_\hbar^i}}}N) =0$ for every integer $i\geq0$. Then, by Lemma \ref{long-exact-sequence-general} and Lemma \ref{localization-ext-general}, we have $$\uExt_R^{d+1,G}(F_\hbar(M),N)\cong \uExt_{\hat{R}}^{d+1,G}(\hat{R}\otimes_RF_\hbar(M), \hat{R}\otimes_RN) = 0.$$
Now the desired equality $\uExt_R^{d+1,G}(M,N)=0$ follows immediately.
\end{proof}

\begin{proposition}\label{idim-cut-at-infinity-special}
Assume $R$ is left $G$-noetherian and $N\in \Mod^GR$ is $\hbar$-torsionfree and $\hbar$-discrete. Then
$$\idim_{\hat{R}}^G\hat{R}\otimes_RN \leq \idim_{\bar{R}}^G\bar{R}\otimes_R N \quad \text{ and } \quad \idim^G_RN  = \idim_{\bar{R}}^G\bar{R}\otimes_R N+1.$$
\end{proposition}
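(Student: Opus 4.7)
The plan is to derive both assertions from a single Key Vanishing: \emph{for every $\hbar$-torsionfree module $F \in \smod^G R$ and every integer $j > d := \idim_{\bar R}^G \bar R \otimes_R N$, one has $\uExt_R^{j,G}(F,N) = 0$}. I will first show how this claim implies the two conclusions of the proposition, then outline its proof.

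For the reductions, I would start by writing an arbitrary $M \in \smod^G \hat R$ as $\hat R \otimes_R M_0$ with $M_0 \in \smod^G R$; since $\hat R \otimes_R T_\hbar(M_0) = 0$, I may replace $M_0$ by $F_\hbar(M_0)$ and arrange for it to be $\hbar$-torsionfree. Lemma \ref{localization-ext-general} expresses $\uExt_{\hat R}^{i,G}(M, \hat R \otimes_R N)$ as a colimit over $j \geq 0$ of $\uExt_R^{i,G}(M_0, \Sigma_{j\varepsilon}\, {}^{\tau_\hbar^j} N)$. The shifted twisted modules $\Sigma_{j\varepsilon}\, {}^{\tau_\hbar^j} N$ are again $\hbar$-torsionfree and $\hbar$-discrete, and their quotients by $\hbar$ are isomorphic to $\bar R \otimes_R N$ up to shift and twist, hence have the same injective dimension $d$ over $\bar R$; so the Key Vanishing annihilates every term of the colimit for $i > d$, which yields $\idim_{\hat R}^G \hat R \otimes_R N \leq d$. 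The second equality $\idim_R^G N = d + 1$ then drops out of Proposition \ref{idim-cut-at-infinity-general}.

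For the Key Vanishing itself I plan to combine two complementary facts about $X := \uExt_R^{j,G}(F, N)$ for $j > d$. First, using Lemma \ref{long-exact-sequence-general} (applicable since $F$ is $\hbar$-torsionfree), the cokernel of $(\lambda_\hbar)_* : X \to \uExt_R^{j,G}(F, \Sigma_\varepsilon\, {}^{\tau_\hbar} N)$ embeds into $\uExt_R^{j+1,G}(\bar R \otimes_R F, N)$, which by Lemma \ref{Rees-lemma-general} is a shift of $\uExt_{\bar R}^{j,G}({}^{\tau_\hbar^{-1}}(\bar R \otimes_R F),\, \bar R \otimes_R N) = 0$. Thus $(\lambda_\hbar)_*$ is surjective, and running the same argument with $N$ replaced by $\Sigma_{k\varepsilon}\, {}^{\tau_\hbar^k} N$ gives surjectivity of every iterate $(\lambda_\hbar)_*^n$. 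Second, I would fix a resolution $P_\bullet \to F$ by finitely generated projectives (available by left $G$-noetherianness), so that $X$ is a subquotient of the finite direct sum of shifts of $N$ realized by $\uHom_R^G(P_j, N)$; since the class of $\hbar$-discrete graded abelian groups is closed under shifts, finite direct sums, and subquotients, the group $X$ is itself $\hbar$-discrete. At each degree $\gamma$ the image of $X_{\gamma - n\varepsilon} \to X_\gamma$ under $(\lambda_\hbar)_*^n$ is then simultaneously all of $X_\gamma$ (by surjectivity) and zero for $n$ large (by discreteness), forcing $X_\gamma = 0$.

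The main obstacle I anticipate is identifying this dichotomy in the first place: $\hbar$-torsionfreeness of $N$ is what keeps $(\lambda_\hbar)_*$ maximally nondegenerate on the high $\uExt$'s, whereas $\hbar$-discreteness is what makes each graded piece eventually annihilated by powers of $\hbar$. These two tendencies can be reconciled only by vanishing, and the preparatory Lemmas \ref{Rees-lemma-general}, \ref{long-exact-sequence-general}, \ref{localization-ext-general} together with Proposition \ref{idim-cut-at-infinity-general} are precisely what is needed to convert this heuristic into the rigorous surjectivity-versus-discreteness argument sketched above.
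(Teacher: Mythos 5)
Your proposal is correct and takes essentially the same route as the paper: reduce via Proposition \ref{idim-cut-at-infinity-general} and Lemma \ref{localization-ext-general} to the vanishing of $\uExt_R^{j,G}$ of finitely generated $\hbar$-torsionfree modules against shifted twists of $N$ for $j>d$, and obtain this by playing the surjectivity of the maps induced by $\lambda_\hbar$ (from Lemmas \ref{Rees-lemma-general} and \ref{long-exact-sequence-general}) against the degreewise vanishing of pushforward along large powers of $\hbar$, which comes from finite generation of the projective terms and $\hbar$-discreteness of $N$ --- exactly the paper's cochain-level argument. The only slip is notational: $X=\uExt_R^{j,G}(F,N)$ carries no $\hbar$-action, so ``$X$ is $\hbar$-discrete'' and the map ``$X_{\gamma-n\varepsilon}\to X_\gamma$'' are not literal; the relevant map is $\uExt_R^{j,G}(F,\Sigma_{-n\varepsilon}\,{}^{\tau_\hbar^{-n}}N)_\gamma\to X_\gamma$, which your own surjectivity step (stated for twisted coefficients) and the finitely-generated-resolution argument show is both surjective and zero for $n\gg0$, so the conclusion stands.
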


\begin{proof}
By Proposition \ref{idim-cut-at-infinity-general}, we may assume $\idim^G_{\bar{R}}\bar{R}\otimes_RN =d <\infty$ and then we are left to show $\idim_{\hat{R}}^G\hat{R}\otimes N\leq d$. By Lemma \ref{localization-ext-general}, it suffices to show that $\uExt_R^{d+1,G}(M,\Sigma_{j\varepsilon}{}^{\tau_{\hbar}^j}N)=0$ for every integer $j\geq0$ and every $\hbar$-torsionfree module $M\in\smod^GR$. By Lemma \ref{Rees-lemma-general} and Lemma \ref{long-exact-sequence-general},  $$(\lambda_\hbar^s)_*:\uExt_{R}^{d+1,G}(M,\Sigma_{(j-s)\varepsilon}{}^{\tau_\hbar^{j-s}}N) \to \uExt_{R}^{d+1,G}(M,\Sigma_{j\varepsilon}{}^{\tau_\hbar^{j}}N)$$ is surjective for every integer $s\geq1$. Choose a projective resolution $\cdots \to P_1 \xrightarrow{\partial_1} P_0\to M\to 0$ in $\Mod^GR$ with all $P_n$ finitely generated. Then,  given any $f\in \uHom_R^G(P_{d+1},\Sigma_{j\varepsilon}{}^{\tau_{\hbar}^j}N)_\alpha$ with $f\circ \partial_{d+2}=0$, there is a $g_s\in\uHom_R^G(P_{d},\Sigma_{j\varepsilon}{}^{\tau_{\hbar}^j}N)_\alpha$ and an $h_s\in \uHom_R^G(P_{d+1},\Sigma_{(j-s)\varepsilon}{}^{\tau_{\hbar}^{j-s}}N)_\alpha$ such that $$f-g_s\circ \partial_{d+1}=\lambda_\hbar^s\circ h_s.$$
Since $P_{d+1}$ is finitely generated and $N$ is $\hbar$-torsionfree and $\hbar$-discrete, one concludes  that $\lambda_\hbar^s\circ h_s=0$ and hence $f=g_s\circ \partial_{d+1}$ for $s\gg0$. Consequently, the desired equality $\uExt_R^{d+1,G} (M,\Sigma_{j\varepsilon}{}^{\tau_{\hbar}^j}N) = 0$ holds.
\end{proof}

\begin{proposition}\label{gldim-cut-at-infinity-general}
Assume $R$ is left $G$-noetherian and $\gldim(\Mod^G\bar{R}) <\infty$. Then
$$\gldim(\Mod^GR)=\max\{\ \gldim(\Mod^G\bar{R}) + 1,\ \gldim(\Mod^G\hat{R})\ \}.$$
\end{proposition}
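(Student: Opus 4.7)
Set $d:=\max\{\gldim(\Mod^G\bar R)+1,\gldim(\Mod^G\hat R)\}$. The plan is to prove the two opposite inequalities $\gldim(\Mod^GR)\geq d$ and $\gldim(\Mod^GR)\leq d$ separately, with Proposition \ref{idim-cut-at-infinity-general} doing the heavy lifting on the upper bound.

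For the lower bound I would argue in two steps. First, $\gldim(\Mod^GR)\geq\gldim(\Mod^G\hat R)$ follows from a standard localization argument: a graded projective $R$-resolution $P_\bullet\to N$ of any $N\in\Mod^G\hat R$ becomes, under the exact functor $\hat R\otimes_R-$, a graded projective $\hat R$-resolution of $\hat R\otimes_RN\cong N$, so that $\pdim^G_{\hat R}N\leq\pdim^G_RN$. Second, $\gldim(\Mod^GR)\geq\gldim(\Mod^G\bar R)+1$ is the classical change-of-rings bound for a regular normal element: picking any $L\in\Mod^G\bar R$ whose $\bar R$-projective dimension attains $\gldim(\Mod^G\bar R)$ (assumed finite), one has $\pdim^G_RL=\pdim^G_{\bar R}L+1$. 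The length-one resolution $0\to\Sigma_{-\varepsilon}{}^{\tau_\hbar^{-1}}R\xrightarrow{\lambda_\hbar}R\to\bar R\to 0$ in $\Mod^GR$, together with Lemma \ref{Rees-lemma-general} applied with $N=R$, is what I would use to detect the extra homological degree.

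For the upper bound, assume $d<\infty$. Since $\gldim(\Mod^GR)=\sup_N\idim^G_RN$, it is enough to show $\idim^G_RN\leq d$ for every $N\in\Mod^GR$. Split $N$ by the sequence $0\to T_\hbar(N)\to N\to F_\hbar(N)\to 0$; the $\hbar$-torsionfree quotient is handled directly by Proposition \ref{idim-cut-at-infinity-general}, giving
\[
\idim^G_RF_\hbar(N)=\max\{\idim^G_{\bar R}\bar R\otimes_R F_\hbar(N)+1,\ \idim^G_{\hat R}\hat R\otimes_R F_\hbar(N)\}\leq d.
\]
For the $\hbar$-torsion part I plan a three-step reduction: (i) for each $L\in\Mod^G\bar R$, the change-of-rings spectral sequence $\uExt^{p,G}_{\bar R}(\Tor^{R,G}_q(\bar R,M),L)\Rightarrow\uExt^{p+q,G}_R(M,L)$ has only the rows $q=0,1$ nonzero (since $\bar R$ admits a length-one projective resolution as a right $R$-module), whence $\idim^G_RL\leq\idim^G_{\bar R}L+1\leq d$; (ii) for each $n\geq 1$, the submodule $T_\hbar(N)[\hbar^n]:=\{x\in N:\hbar^nx=0\}$ carries the finite filtration $T_\hbar(N)[\hbar^n]\supseteq\hbar T_\hbar(N)[\hbar^n]\supseteq\cdots\supseteq\hbar^nT_\hbar(N)[\hbar^n]=0$ with consecutive quotients in $\Mod^G\bar R$, so iterating (i) along the long exact $\uExt$-sequence yields $\idim^G_RT_\hbar(N)[\hbar^n]\leq d$; (iii) since $R$ is left $G$-noetherian, every cyclic graded module $R/I$ is pseudo-coherent, so $\uExt^{d+1,G}_R(R/I,-)$ commutes with the filtered colimit $T_\hbar(N)=\colim_n T_\hbar(N)[\hbar^n]$, giving $\uExt^{d+1,G}_R(R/I,T_\hbar(N))=0$ for every homogeneous left ideal $I$ and therefore $\idim^G_RT_\hbar(N)\leq d$ by the graded Baer criterion.

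The hard part is step (iii). While Proposition \ref{idim-cut-at-infinity-general} handles the $\hbar$-torsionfree case essentially optimally, it offers no direct control over $T_\hbar(N)$, which in general is neither finitely generated nor annihilated by any fixed power of $\hbar$. The left $G$-noetherian hypothesis is essential precisely here: it permits the Baer-type reduction to cyclic test modules and allows $\uExt^{d+1,G}_R(R/I,-)$ to commute with filtered colimits, converting the problem on the whole of $T_\hbar(N)$ into the tractable problems on the submodules $T_\hbar(N)[\hbar^n]$.
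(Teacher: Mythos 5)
Your argument is correct, and while the lower bound matches the paper (which simply invokes the graded versions of \cite[Theorem 7.3.5, Corollary 7.4.3]{MR}, the results your localization and Rees-lemma sketches reprove; for the Rees detection you should take $L$ finitely generated, so that $\pdim_{\bar{R}}^GL=m$ forces $\uExt_{\bar{R}}^{m,G}(L,\bar{R})\neq0$), your upper bound takes a genuinely different route. The paper works with projective dimension on finitely generated modules: for $M\in\smod^GR$ it bounds $\pdim_R^GT_\hbar(M)$ by the change-of-rings theorem applied to the finite filtration of $T_\hbar(M)$ (possible since $T_\hbar(M)$ is finitely generated, hence killed by a fixed power of $\hbar$), and bounds $\pdim_R^GF_\hbar(M)$ by showing $\uExt_R^{d+1,G}(F_\hbar(M),N)\cong\uExt_{\hat{R}}^{d+1,G}(\hat{R}\otimes_RF_\hbar(M),\hat{R}\otimes_RN)=0$ via Lemma \ref{long-exact-sequence-general} and Lemma \ref{localization-ext-general}; the noetherian hypothesis enters through Lemma \ref{localization-ext-general} (and through computing $\gldim$ on finitely generated modules). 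You instead work with injective dimension on arbitrary modules: the torsionfree quotient is dispatched wholesale by Proposition \ref{idim-cut-at-infinity-general} (so you reuse, rather than redo, the localization machinery), while the torsion part --- which for arbitrary $N$ is not killed by a fixed power of $\hbar$ --- is handled by the inequality $\idim_R^GL\leq\idim_{\bar{R}}^GL+1$ (your change-of-rings spectral sequence, valid since $\pdim_{R^o}^G\bar{R}\leq1$; this inequality is not stated in the paper but is standard), the finite $\hbar$-adic filtrations of the $N[\hbar^n]$, and the Baer-criterion-plus-filtered-colimit step, which is where your use of left $G$-noetherianity is concentrated. The trade-off is clear: the paper's route stays inside $\smod^GR$ and needs the full strength of Lemmas \ref{long-exact-sequence-general} and \ref{localization-ext-general} a second time, whereas yours leans on the already-proved Proposition \ref{idim-cut-at-infinity-general} at the price of the colimit argument for non-finitely-generated torsion modules; both are complete proofs of the statement.
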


This proposition is a generalization of \cite[Chapter I, Section 7.2, Theorem 4 (2)]{LV}, where $R$ is assumed to be left and right $G$-noetherian and $\hbar$ is regular central.

\begin{proof}
Let $d=\max\{\ \gldim(\Mod^G\bar{R}) + 1,\ \gldim(\Mod^G\hat{R})\ \}$. Then $\gldim(\Mod^GR)\geq d$ by the graded version of \cite[Theorem 7.3.5 (b), Corollary 7.4.3]{MR}. We may assume $d<\infty$ and proceed to see $\gldim(\Mod^GR)\leq d$. It suffices to show $\pdim_R^GM\leq d$ for every module $M\in\smod^GR$. By the graded version of \cite[Theorem 7.3.5 (a)]{MR}, we have $\pdim_R^G \hbar^iT_\hbar(M)/\hbar^{i+1}T_\hbar(M) \leq d$ for every integer $i\geq 0$.
Since $T_\hbar(M)$ is finitely generated,  $\hbar^n T_\hbar(M)=0$ for some integer $n\geq 1$. It follows that $\pdim_R^G T_\hbar(M)\leq d.$
The graded version of \cite[Theorem 7.3.5 (a)]{MR}  also yields $\pdim_{R}^G \bar{R}\otimes_RF_\hbar(M) \leq d$. Then, for every module $N\in \Mod^GR$, Lemma \ref{long-exact-sequence-general} together with Lemma \ref{localization-ext-general} tells us that
$$
\uExt_{R}^{d+1,G}(F_\hbar(M),N) \cong \uExt^{d+1,G}_{\hat{R}}(\hat{R}\otimes_RF_\hbar(M), \hat{R}\otimes_RN) = 0.
$$
Consequently, $\pdim_R^GF_\hbar(M) \leq d.$ Now the desired inequality $\pdim_R^GM \leq d$ follows immediately.
\end{proof}

\begin{proposition}\label{gldim-cut-at-infinity-special}
Assume one of the following two conditions hold: (a) $R$ is left $G$-noetherian and $\hbar\in J^G(R)$; (b) $p(\supp R) \subseteq \N$ and $p(\varepsilon)>0$ for some group homomorphism $p:G\to \Z$. Then
$$
\gldim(\Mod^G\hat{R}) \leq \gldim(\Mod^G\bar{R}).
$$
Furthermore, if (a) holds and $\gldim(\Mod^G\bar{R}) =d <\infty$ then $\gldim(\Mod^GR) =d+1$.
\end{proposition}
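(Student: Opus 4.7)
The strategy is to prove $\gldim(\Mod^G\hat R) \leq d$, where $d := \gldim(\Mod^G\bar R)$, since the supplementary assertion under (a) then follows immediately from Proposition~\ref{gldim-cut-at-infinity-general}, which gives $\gldim(\Mod^GR) = \max\{d+1,\gldim(\Mod^G\hat R)\} = d+1$.

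A preliminary step is to establish $\hbar \in J^G(R)$ in both cases. Under (a) this is assumed; under (b), for any maximal homogeneous left ideal $I$, the graded simple module $R/I$ is supported in non-negative $p$-degrees, so $\hbar\cdot(R/I)$ is a homogeneous submodule whose nonzero homogeneous elements have $p$-degree $\geq p(\varepsilon)>0$, making it a proper submodule, hence zero by simplicity, so $\hbar \in I$. The core of the proof is the following key lemma: if $M \in \Mod^GR$ is $\hbar$-torsionfree and either (a) finitely generated or (b) bounded below in $p$-degree, then $\pdim_R^GM \leq \pdim_{\bar R}^G(\bar R\otimes_RM)$. I would prove this by constructing a minimal graded free resolution $F_\bullet \to M$ in $\Mod^GR$---available in either case by graded Nakayama (under (a) via finite generation together with noetherianness; under (b) via the positive $p$-grading, which forces bounded-below modules $N$ with $\hbar N = N$ to be zero). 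Since $M$ is $\hbar$-torsionfree, $\Tor_i^R(\bar R,M) = 0$ for all $i\geq1$, so tensoring with $\bar R$ keeps the sequence exact; and since $\hbar \in J^G(R)$ gives $J^G(R)/\hbar R \subseteq J^G(\bar R)$, the induced resolution $\bar F_\bullet \to \bar R\otimes_R M$ over $\bar R$ remains minimal. Since lengths of minimal graded free resolutions compute projective dimension in both contexts, the lengths coincide and the stated inequality follows.

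To finish, I need to realize any $L \in \Mod^G\hat R$ as $\hat R\otimes_R M$ for some $M$ of the admissible type. Under (a), $\hat R$ is left $G$-noetherian (as a localization of $R$), so it suffices to treat $L \in \smod^G\hat R$; take $M$ to be the $R$-submodule of $L$ generated by any finite homogeneous generating set and replace $M$ by $M/T_\hbar(M)$ to render it $\hbar$-torsionfree. Under (b), set $M := \bigoplus_{p(\gamma)\geq0} L_\gamma$, which is an $R$-submodule of $L$ (using $p(\supp R)\subseteq \N$), bounded below in $p$-degree, $\hbar$-torsionfree as a submodule of $L$, and satisfies $\hat R\otimes_R M \cong L$ since multiplying any homogeneous $y \in L$ by $\hbar^k$ for $k\gg0$ lands it in $M$. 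The key lemma then yields $\pdim_R^G M \leq d$, and flatness of $\hat R$ over $R$ gives $\pdim_{\hat R}^G L \leq d$; passing to the supremum over $L$ completes the proof. The main obstacle will be case (b) of the key lemma: without noetherianness, the minimal-resolution machinery has to be built purely on the positive $p$-grading, and one must verify that each syzygy inherits the bounded-below property so that graded Nakayama continues to apply at every stage of the construction.
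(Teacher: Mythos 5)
Your overall architecture coincides with the paper's: everything is reduced to the projective-dimension comparison $\pdim_R^G M \leq \pdim_{\bar{R}}^G(\bar{R}\otimes_R M)$ for $\hbar$-torsionfree $M$, every graded $\hat{R}$-module is realized as $\hat{R}\otimes_R M$ for such an $M$, flatness of $\hat{R}$ gives $\pdim_{\hat{R}}^G(\hat{R}\otimes_RM)\leq\pdim_R^GM$, and the final assertion follows from Proposition \ref{gldim-cut-at-infinity-general}. Your reduction steps and the verification that $\hbar\in J^G(R)$ under (b) are correct. The difference is that the paper obtains the key comparison by invoking the graded version of \cite[Proposition 7.3.6 (b)]{MR}, whereas you try to prove it via minimal graded free resolutions, and that is where your argument has a genuine gap.

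Minimal graded free resolutions need not exist under either hypothesis (a) or (b): a surjection $F\to M$ from a graded free module with kernel contained in $\mathfrak{m}F$ (for $\mathfrak{m}=J^G(R)$, or $\hbar R$, or the positive part of the $p$-grading) forces $M/\mathfrak{m}M\cong F/\mathfrak{m}F$ to be a graded free $R/\mathfrak{m}$-module, and nothing in the hypotheses makes $R$ graded semiperfect or graded local. Concretely, take $R=\Z[[t]]$ with trivial grading and $\hbar=t\in J(R)$ (condition (a)), or $R=\Z[t]$ graded by $t$-degree with $p=\mathrm{id}$ and $\hbar=t$ (condition (b)); then $M=R/2R$ is cyclic, bounded below and $\hbar$-torsionfree, yet any resolution with $d_1(F_1)\subseteq tF_0$ would give $\Z/2\cong M/tM\cong F_0/tF_0$, which is a nonzero free $\Z$-module --- impossible. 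So the construction already fails at the first syzygy, in case (a) as well as in case (b), not only in the place you flagged. The standard repair, which is exactly the content of the result the paper cites, is an induction on $n=\pdim_{\bar{R}}^G(\bar{R}\otimes_RM)$: for $n=0$ one proves directly that an $\hbar$-torsionfree $M$ (finitely generated under (a), bounded below in $p$-degree under (b)) with $\bar{R}\otimes_RM$ projective is itself projective, by lifting a splitting of $\bar{F}_0\to\bar{R}\otimes_RM$ and using Nakayama with respect to $\hbar\in J^G(R)$, resp. the positive $p$-grading; for $n\geq1$ one takes $0\to K\to F_0\to M\to 0$ with $F_0$ graded free, notes that $K$ is again $\hbar$-torsionfree and of the admissible type ($G$-noetherianness under (a), bounded-below under (b)), and that $\hbar$-torsionfreeness of $M$ keeps $0\to\bar{K}\to\bar{F}_0\to\bar{R}\otimes_RM\to0$ exact, so the induction hypothesis applies to $K$. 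With the key lemma proved this way, the rest of your argument goes through as written.
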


\begin{proof}
By the graded version of \cite[Proposition 7.3.6 (b)]{MR} under condition (a) and by an  obvious modification of the discussion of \cite[Proposition 7.3.6 (b)]{MR} under condition (b), we obtain that $$\pdim_{\hat{R}}^G\hat{R}\otimes_RM \leq \pdim_R^GM = \pdim_{\bar{R}}^G\bar{R}\otimes_R M$$ for every $\hbar$-torsionfree module $M\in \smod^GR$. It follows readily that $\gldim(\Mod^G\hat{R}) \leq \gldim(\Mod^G\bar{R})$. The last statement is a direct consequence of this inequality and Proposition \ref{gldim-cut-at-infinity-general}.
\end{proof}

\section{On the Auslander condition}

We follow the notations and conventions in the previous sections. In particular, $R$ is a $G$-graded ring and $\hbar\in R$ is a homogeneous regular normal non-invertible element of degree $\varepsilon$. This section is devoted to study the relations between the Auslander conditions of $R$, $\bar{R}$ and $\hat{R}$.

First we establish several technical lemmas on $\uExt$-groups.

\begin{lemma}(Rees' Lemma)\label{Rees-lemma-regular}
There is a natural isomorphism
$$\uExt_{\bar{R}}^{i,G}(L,\bar{R}) \cong \Sigma_{-\varepsilon}\uExt_R^{i+1,G}(L,R)^{{\tau_\hbar}}$$ in $\Mod^GR^o$ for every integer $i\in \Z$ and every module $L\in \Mod^G\bar{R}$.
\end{lemma}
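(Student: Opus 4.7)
The plan is to derive this Rees-type identity from the general Lemma \ref{Rees-lemma-general} by specializing to $N = R$ and then transferring the $\tau_\hbar$-twist from the first slot of the Ext to the right $R$-module structure.

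First, since $\hbar$ is regular, $R$ is $\hbar$-torsionfree, so Lemma \ref{Rees-lemma-general} with $N = R$ supplies a natural isomorphism
\[
\uExt_{\bar{R}}^{i,G}(L, \bar{R}) \;\cong\; \Sigma_{-\varepsilon}\, \uExt_R^{i+1,G}({}^{\tau_\hbar} L, R)
\]
of graded abelian groups. The proof of that lemma rests on the convergence of a spectral sequence that is functorial in $N$, so when $N = R$ is given its standard bimodule structure the isomorphism automatically upgrades to one in $\Mod^G R^o$, the right $R$-action on both sides being induced by the right regular action of $R$ on itself (and factoring through $\bar R$ on the left for the left-hand side).

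Second, I will prove that for every $\mu \in \Aut_G(R)$ and every $L \in \Mod^G R$ there is a canonical right $R$-module isomorphism
\[
\uHom_R^G({}^\mu L, R) \;\xrightarrow{\sim}\; \uHom_R^G(L, R)^\mu, \qquad f \longmapsto \mu \circ f,
\]
extending to $\uExt_R^{i,G}({}^\mu L, R) \cong \uExt_R^{i,G}(L, R)^\mu$ in $\Mod^G R^o$ for each $i$. The $\uHom$-level statement is a direct computation: rewriting the left $R$-linearity of $f: {}^\mu L \to R$ in the form $f(ax) = \mu^{-1}(a) f(x)$ (substitute $b = \mu^{-1}(a)$ into $f(\mu(b)x) = bf(x)$), one checks that $\mu \circ f$ is honestly left $R$-linear from $L$ to $R$, and that $\mu \circ (f \cdot r) = (\mu \circ f) \cdot \mu(r)$---which is precisely the $\mu$-twisted right action on the target. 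Since ${}^\mu R \cong R$ in $\Mod^G R$ via $r \mapsto \mu^{-1}(r)$, twisting any projective resolution of $L$ by $\mu$ yields a projective resolution of ${}^\mu L$, so this $\uHom$-identification lifts to the claimed isomorphism on $\uExt$.

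Specializing the second isomorphism to $\mu = \tau_\hbar$ and composing with the first immediately yields the lemma. The only real point requiring care will be keeping track of whether $\tau_\hbar$ or $\tau_\hbar^{-1}$ appears at each stage: a $\tau_\hbar^{-1}$-twist is built into the resolution $0 \to \Sigma_{-\varepsilon}{}^{\tau_\hbar^{-1}} R \to R \to \bar R \to 0$ underlying Lemma \ref{Rees-lemma-general} and gets flipped to $\tau_\hbar$ on passing to $\uHom$; likewise the map $f \mapsto \tau_\hbar \circ f$ in the second step converts a ${}^{\tau_\hbar}$-twist on the first argument into a $\tau_\hbar$-twist on the right $R$-structure (not a $\tau_\hbar^{-1}$-twist). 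Once this bookkeeping is done correctly, the proof reduces to the two ingredients above.
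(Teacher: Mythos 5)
Your argument is correct, and at bottom it runs on the same engine as the paper's proof: the change-of-rings spectral sequence for $R\to\bar{R}$ applied to a shifted, twisted copy of $R$. The difference lies in how the right $R$-structure and the $\tau_\hbar$-twist are tracked. The paper reproves the statement directly, running the spectral sequence in $\Mod^GR^o$ with the bimodule coefficient $\Sigma_{-\varepsilon}{}^{\tau_\hbar^{-1}}\!R^1$ (left action twisted, right action untwisted), using that $\uExt_R^{1,G}(\bar{R},\Sigma_{-\varepsilon}{}^{\tau_\hbar^{-1}}\!R^1)\cong\bar{R}$ as a bimodule and the identification $\uExt_R^{n,G}(L,\Sigma_{-\varepsilon}{}^{\tau_\hbar^{-1}}\!R^1)\cong\Sigma_{-\varepsilon}\uExt_R^{n,G}(L,R)^{\tau_\hbar}$, so the twist lands on the right structure from the start. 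You instead recycle Lemma \ref{Rees-lemma-general} with $N=R$, upgrade it to $\Mod^GR^o$ by functoriality in $N$ (right multiplications $\rho_r\colon R\to\Sigma_{\deg r}R$ are morphisms of graded left modules, and both sides of that lemma are functorial in such maps, so the induced right actions are intertwined and agree with the standard ones --- this is the one point you should spell out, but it is routine), and then move the twist from the first argument to the right structure via $\uExt_R^{i,G}({}^{\mu}L,R)\cong\uExt_R^{i,G}(L,R)^{\mu}$, whose Hom-level and resolution-level verifications you carry out correctly, including the direction of the twist. Your route buys economy --- Lemma \ref{Rees-lemma-regular} becomes a formal consequence of Lemma \ref{Rees-lemma-general} plus a twisting lemma --- whereas the paper's version keeps all the structure inside a single spectral sequence computed with bimodule coefficients and thereby avoids the naturality bookkeeping.
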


\begin{proof}
It follows from the collapsing of the following spectral sequence in $\Mod^GR^o$:
$$\uExt_{\bar{R}}^{p,G}(L, \uExt_{R}^{q,G}(\bar{R}, \Sigma_{-\varepsilon}{}^{{{\tau_\hbar}}^{-1}}\!R^1)) \, \Longrightarrow \, \uExt_{R}^{n,G}(L, \Sigma_{-\varepsilon}{}^{{{\tau_\hbar}}^{-1}}\!R^1)\cong \Sigma_{-\varepsilon}\uExt_{R}^{n,G}(L, R\,)^{{{\tau_\hbar}}}.$$
Note that $\uExt_{R}^{q,G}(\bar{R}, \Sigma_{-\varepsilon}{}^{{{\tau_\hbar}}^{-1}}\!R^1)=0$ for $q\neq 1$ and  $\uExt_R^{1,G}(\bar{R}, \Sigma_{-\varepsilon}{}^{{{\tau_\hbar}}^{-1}}\!R^1)\cong \bar{R}$ in $\Mod^GR\otimes R^o$.
\end{proof}

\begin{lemma}\label{long-exact-sequence-regular}
Assume $M\in \Mod^GR$ is $\hbar$-torsionfree. Then there is a long exact sequence in $\Mod^GR^{\rm o}$:
$$
\cdots \to \uExt_{\bar{R}}^{i-1,G}(\bar{R}\otimes_R M, \bar{R})   \to \Sigma_{-\varepsilon}\uExt_R^{i,G}(M,R)^{{{\tau_\hbar}}} \xrightarrow{\rho_{\hbar}} \uExt_R^{i,G}(M,R) \to \uExt_{\bar{R}}^{i,G}(\bar{R}\otimes_R M, \bar{R})  \to   \cdots.
$$
\end{lemma}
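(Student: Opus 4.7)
The plan is to produce the long exact sequence by applying $\uHom_R^G(M,-)$ and its right derived functors to a suitable short exact sequence of graded $R$-bimodules built from $\hbar$, and then to use the hypothesis on $M$ to rewrite the resulting $\uExt$-groups into the claimed form.

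First I would set up the short exact sequence. Using the normalizing identity $\tau_\hbar(c)\hbar = \hbar c$ for $c \in R$, one verifies that the map $\rho_\hbar\colon \Sigma_{-\varepsilon}R^{\tau_\hbar} \to R$ defined by $r \mapsto r\hbar$ is a degree-preserving $R$-bimodule homomorphism; it is injective because $\hbar$ is regular, and its cokernel is $\bar{R}$. Thus we obtain a short exact sequence of graded $R$-bimodules
$$0 \to \Sigma_{-\varepsilon}R^{\tau_\hbar} \xrightarrow{\rho_\hbar} R \to \bar{R} \to 0.$$
Applying $\uHom_R^G(M,-)$ and passing to right derived functors yields a long exact sequence in $\Mod^G R^o$
$$\cdots \to \uExt_R^{i-1,G}(M,\bar{R}) \to \uExt_R^{i,G}(M, \Sigma_{-\varepsilon}R^{\tau_\hbar}) \xrightarrow{(\rho_\hbar)_*} \uExt_R^{i,G}(M,R) \to \uExt_R^{i,G}(M,\bar{R}) \to \cdots.$$

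Next I would make two identifications to bring this sequence into the form stated in the lemma. The first is the natural isomorphism
$$\uExt_R^{i,G}(M, \Sigma_{-\varepsilon}R^{\tau_\hbar}) \cong \Sigma_{-\varepsilon}\uExt_R^{i,G}(M,R)^{\tau_\hbar},$$
obtained from the fact that the shift $\Sigma_{-\varepsilon}$ and the right twist $(-)^{\tau_\hbar}$ are exact endofunctors of $\Mod^G R^o$ that commute with $\uHom_R^G(M,-)$ when applied to the target; under this identification the map $(\rho_\hbar)_*$ transports to right multiplication by $\hbar$. The second is the change-of-rings isomorphism
$$\uExt_R^{i,G}(M,\bar{R}) \cong \uExt_{\bar{R}}^{i,G}(\bar{R}\otimes_R M, \bar{R}),$$
which holds because, by the hypothesis that $M$ is $\hbar$-torsionfree, the sequence $0 \to \Sigma_{-\varepsilon}{}^{\tau_\hbar^{-1}}M \xrightarrow{\lambda_\hbar} M \to \bar{R}\otimes_R M \to 0$ obtained by tensoring the length-one resolution $0 \to \Sigma_{-\varepsilon}{}^{\tau_\hbar^{-1}}R \xrightarrow{\lambda_\hbar} R \to \bar{R} \to 0$ of $\bar{R}$ with $M$ on the right stays exact; consequently the higher torsion groups of $\bar{R}$ against $M$ vanish, and the standard change-of-rings spectral sequence collapses.

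Substituting these two identifications into the displayed long exact sequence produces exactly the sequence claimed by the lemma. I expect the main piece of bookkeeping to be the first identification together with the verification that $(\rho_\hbar)_*$ transports to right multiplication by $\hbar$; both can be checked at the $\uHom$-level using a graded projective resolution of $M$, relying on the normalizing relation $\tau_\hbar(c)\hbar = \hbar c$ to align the shifts and twists, and then transferred to $\uExt$ via exactness of $\Sigma_{-\varepsilon}$ and $(-)^{\tau_\hbar}$.
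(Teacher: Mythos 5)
Your proposal is correct, but it takes a genuinely different route from the paper. The paper applies $\uHom_R^G(-,R)$ to the short exact sequence of left modules $0 \to \Sigma_{-\varepsilon}{}^{\tau_\hbar^{-1}}M \xrightarrow{\lambda_\hbar} M \to \bar{R}\otimes_R M \to 0$ (this is where $\hbar$-torsionfreeness enters there), then runs a four-row commutative diagram of shift/twist identifications to convert $(\lambda_\hbar)^*$ into right multiplication $\rho_\hbar$, and finally invokes Rees' Lemma (Lemma \ref{Rees-lemma-regular}) to rewrite the outer terms $\uExt_R^{i,G}(\bar{R}\otimes_RM,R)$ as $\uExt_{\bar{R}}^{i-1,G}(\bar{R}\otimes_RM,\bar{R})$ up to shift and twist. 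You instead apply $\uHom_R^G(M,-)$ to the bimodule sequence $0 \to \Sigma_{-\varepsilon}R^{\tau_\hbar} \xrightarrow{\rho_\hbar} R \to \bar{R} \to 0$, so the middle map is visibly $\rho_\hbar$ with essentially no bookkeeping, and you handle the outer terms by the other change-of-rings comparison, $\uExt_R^{i,G}(M,\bar{R}) \cong \uExt_{\bar{R}}^{i,G}(\bar{R}\otimes_RM,\bar{R})$, which is where torsionfreeness enters for you: $\Tor_1^R(\bar{R},M)=0$ makes $\bar{R}\otimes_RP^\bullet$ a projective resolution of $\bar{R}\otimes_RM$ for any finitely-or-not generated graded projective resolution $P^\bullet\to M$, and adjunction $\uHom_R^G(P^\bullet,\bar{R})\cong\uHom_{\bar{R}}^G(\bar{R}\otimes_RP^\bullet,\bar{R})$ gives the isomorphism (your spectral-sequence phrasing is an equivalent packaging). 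The trade-off: your argument avoids both Rees' Lemma and the twist-chasing diagram, at the cost of introducing the Tor--Ext base-change, whereas the paper reuses Lemma \ref{Rees-lemma-regular}, already established, and stays within the single pattern of first-variable long exact sequences used throughout Sections 3--4. Both versions correctly track the right $R$-module structures, since all identifications (the twist $\uExt_R^{i,G}(M,R^{\tau_\hbar})\cong\uExt_R^{i,G}(M,R)^{\tau_\hbar}$, the shift, and the adjunction) are compatible with the right action coming from the bimodule argument.
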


\begin{proof}
Consider the following commutative diagram in $\Mod^GR^o$:
{\small$$
\begin{array}{ccccccccccc}
\cdots \!\! & \!\! \to\!\!  & \!\!  \uExt_R^{i,G}(\bar{R}\otimes_R M,R)\!\!  & \!\! \to \!\!  & \!\! \uExt_R^{i,G}(M, R) \!\! & \!\! \xrightarrow{(\lambda_\hbar)^*} \!\!  & \!\!  \uExt_R^{i,G}(\Sigma_{-\varepsilon} {}^{{{\tau_\hbar}}^{-1}}\! M,R) \!\!  & \!\!  \to \!\!  & \!\!  \uExt_R^{i+1,G}(\bar{R}\otimes_R M,R) \!\! & \!\!  \to \!\!   & \!\!  \cdots \\
&&\!\! \downarrow = \!\!& &\!\! \downarrow = \!\!&&\!\! \downarrow \cong \!\!&&\!\! \downarrow = \!\!&& \\
\cdots\!\! & \!\! \to\!\!  & \!\!  \uExt_R^{i,G}(\bar{R}\otimes_R M,R)\!\!  & \!\! \to \!\!  & \!\! \uExt_R^{i,G}(M, R) \!\! & \!\! \xrightarrow{(\lambda_\hbar)_*} \!\!  & \!\!  \uExt_R^{i,G}(M,\Sigma_{\varepsilon} {}^{{{\tau_\hbar}}}\! R) \!\!  & \!\!  \to \!\!  & \!\!  \uExt_R^{i+1,G}(\bar{R}\otimes_R M,R) \!\! & \!\!  \to \!\!   & \!\!  \cdots \\
&&\!\!  \downarrow = \!\! & & \!\!  \downarrow = \!\! && \quad \,\,\, \downarrow({{\tau_\hbar}}^{-1})_*  \!\!&& \!\!  \downarrow =  && \\
\cdots\!\! & \!\! \to\!\!  & \!\!  \uExt_R^{i,G}(\bar{R}\otimes_R M,R)\!\!  & \!\! \to \!\!  & \!\! \uExt_R^{i,G}(M, R) \!\! & \!\! \xrightarrow{(\rho_\hbar)_*} \!\!  & \!\!  \uExt_R^{i,G}(M,\Sigma_{\varepsilon}\, R^{{{\tau_\hbar}}^{-1}}) \!\!  & \!\!  \to \!\!  & \!\!  \uExt_R^{i+1,G}(\bar{R}\otimes_R M,R) \!\! & \!\!  \to \!\!   & \!\!  \cdots\\
&&\!\! \downarrow = \!\!& &\!\! \downarrow = \!\!&&\!\! \downarrow \cong  \!\!&&\!\! \downarrow = \!\!&& \\
\cdots\!\! & \!\! \to\!\!  & \!\!   \uExt_R^{i,G}(\bar{R}\otimes_R M,R)\!\!  & \!\! \to \!\!  & \!\!  \uExt_R^{i,G}(M,R) \!\! & \!\! \xrightarrow{\rho_\hbar} \!\!  & \!\!  \Sigma_{\varepsilon}\uExt_R^{i,G}(M,R)^{{{\tau_\hbar}}^{-1}} \!\!  & \!\!  \to \!\!  & \!\!  \uExt_R^{i+1,G}(\bar{R}\otimes_R M,R) \!\! & \!\!  \to \!\!   & \!\!  \cdots,
\end{array}
$$}

\noindent where the top row is the long $\uExt$-sequence associated to $0 \to  \Sigma_{-\varepsilon} {}^{{{\tau_\hbar}}^{-1}}\! M \xrightarrow{\lambda_\hbar} M  \to  \bar{R}\otimes_R  M  \to 0$. Apply Lemma \ref{Rees-lemma-regular} to the 1st and 4th term of the bottom row, the result follows.
\end{proof}

\begin{lemma}\label{spectral-sequence}
Assume $R$ is left and right $G$-noetherian and $M\in \smod^GR$ is $\hbar$-torsionfree. Then there is a sequence of complexes $(E_1^*,d_1^*), (E_2^*,d_2^*),\cdots$ in $\smod^G\bar{R}^o$ such that for every $i\in \Z$ one has:
\begin{enumerate}
\item $E_1^i \cong \Sigma_{-i\varepsilon} \uExt_{\bar{R}}^{i,G}(\bar{R}\otimes_RM,\bar{R})^{{{\tau_\hbar}}^{i}} $ in $\Mod^G\bar{R}^o$;
\item $E_r^i \cong \Sigma_{-i\varepsilon}H^i(E_{r-1}^*,d_{r-1}^*)^{{{\tau_\hbar}}^{i}}$ in $\Mod^G\bar{R}^o$ for every $r\geq2$;
\item $E_r^i \cong \Sigma_{-ir\varepsilon} F_\hbar(\uExt_R^{i,G}(M,R))\otimes_R\bar{R}^{{{\tau_\hbar}}^{ir}}$ in $\Mod^G\bar{R}^o$ for all $r\gg1$.
\end{enumerate}
\end{lemma}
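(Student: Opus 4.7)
Plan: I would construct $(E_r^*,d_r^*)$ as the pages of a Bockstein-type spectral sequence built from an exact couple coming from Lemma~\ref{long-exact-sequence-regular}, in analogy with the classical Bockstein spectral sequence for multiplication by a prime.

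For the setup, write $Y^i:=\uExt_R^{i,G}(M,R)$ and $Z^i:=\uExt_{\bar R}^{i,G}(\bar R\otimes_R M,\bar R)$. Because $R$ is left and right $G$-noetherian and $M\in\smod^G R$, both $Y^i\in\smod^G R^o$ and (via Rees' Lemma~\ref{Rees-lemma-regular} together with noetherianness of $\bar R$) $Z^i\in\smod^G\bar R^o$. The long exact sequence
\begin{equation*}
\cdots\to Z^{i-1}\xrightarrow{\partial}\Sigma_{-\varepsilon}(Y^i)^{\tau_\hbar}\xrightarrow{\rho_\hbar} Y^i\xrightarrow{\pi} Z^i\to\cdots
\end{equation*}
from Lemma~\ref{long-exact-sequence-regular} is then literally an exact couple $D\xrightarrow{\rho_\hbar}D\xrightarrow{\pi}E\xrightarrow{\partial}D$ whose $D$-part records the $Y^i$'s and whose $E$-part records the $Z^i$'s; the subquotients of $E$ arising at higher pages are automatically right $\bar R$-modules since they are subquotients of the $Z^i$'s.

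For the pages, I would iterate the derived couple construction. Morally, on the $r$-th page the differential is $d_r=\pi\circ\rho_\hbar^{-(r-1)}\circ\partial$, defined on classes in $E$ that lift across $r-1$ applications of $\rho_\hbar$. Each appearance of $\partial$ (and each inversion of $\rho_\hbar$) contributes, in its target, a shift by $-\varepsilon$ and a twist by $\tau_\hbar$ — exactly the factors visible in $\Sigma_{-\varepsilon}(Y^i)^{\tau_\hbar}$. To promote $d_r$ from a map of abelian groups to a degree-$+1$ morphism in $\Mod^G\bar R^o$, I would apply at the $i$-th component the normalisation $\Sigma_{-i\varepsilon}(-)^{\tau_\hbar^i}$, which precisely absorbs the $\varepsilon$-shift and $\tau_\hbar$-twist collected between positions $0$ and $i$. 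At the level of $E_1$ this gives assertion (1), and each subsequent passage $E_{r-1}\rightsquigarrow E_r$ of derived couples contributes one additional factor $\Sigma_{-i\varepsilon}(-)^{\tau_\hbar^i}$ at position $i$, producing assertion (2).

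For the stabilization (3), I would exploit finite generation: since $Y^i\in\smod^G R^o$, the torsion submodule $T_\hbar(Y^i)$ is finitely generated and hence annihilated by $\hbar^{n_i}$ for some $n_i\ge 0$. For $r>\max\{n_{i-1},n_i,n_{i+1}\}$, multiplication by $\hbar^r$ kills every piece of $\hbar$-torsion that can interfere around position $i$, so the $r$-th derived couple at that spot is determined entirely by the $\hbar$-torsion-free quotients $F_\hbar(Y^\bullet)$, on which $\rho_\hbar$ is injective. A direct unwinding of the construction identifies $E_r^i$, before the accumulated normalisations, with the cokernel of $\rho_\hbar$ on $F_\hbar(Y^i)$, namely $F_\hbar(Y^i)\otimes_R\bar R$; applying the normalisation $\Sigma_{-i\varepsilon}(-)^{\tau_\hbar^i}$ a total of $r$ times yields the shift $\Sigma_{-ir\varepsilon}$ and twist $\tau_\hbar^{ir}$ of assertion (3).

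The main obstacle, as I see it, is the meticulous bookkeeping of shifts by multiples of $\varepsilon$ and twists by powers of $\tau_\hbar$, so that every step of the derived couple construction genuinely takes place in $\Mod^G\bar R^o$ rather than in an ambient category of abelian groups; once that normalisation is set up correctly, both the inductive formula (2) and the stabilization (3) follow essentially formally from the exact couple together with finite generation of the $Y^i$.
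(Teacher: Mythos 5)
Your proposal is correct, but it takes a genuinely different route from the paper. The paper never passes through an exact couple: it chooses a finitely generated projective resolution $P^*\to M$, sets $(Q^*,d^*)=\uHom_R^G(P^*,R)$, and defines every page $E_r^{p,q}$ directly as an explicit subquotient of $Q^{p+q}$ built from the $\hbar$-adic filtration $Q^*\supseteq Q^*\hbar\supseteq Q^*\hbar^2\supseteq\cdots$; the crucial stabilization (3) is then obtained by a graded Artin--Rees type argument, using that the Rees-type ring $R[\hbar t]\subseteq R[t]$ is $\bar{G}$-noetherian to find generators $x_jt^{m_j}$ of the boundaries and conclude $\im(d^i)\cap Q^{i+1}\hbar^r\subseteq \im(d^i)\hbar$ for $r\gg0$. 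Your Bockstein-style construction instead derives the pages from the exact couple given by Lemma \ref{long-exact-sequence-regular}, and your stabilization argument works entirely at the level of the cohomology modules $Y^i=\uExt_R^{i,G}(M,R)$: since these are finitely generated over the right $G$-noetherian ring $R$, their $\hbar$-torsion is bounded, so for $r\gg0$ one has $k^{-1}\bigl(Y^{i+1}\hbar^{r-1}\bigr)=\ker k=\im j$ and $\ker(\rho_\hbar^{r-1})=T_\hbar(Y^i)$, giving $E_r^i\cong Y^i/\bigl(Y^i\hbar+T_\hbar(Y^i)\bigr)\cong F_\hbar(Y^i)\otimes_R\bar{R}$ exactly as required, with the twist $\Sigma_{-ir\varepsilon}(-)^{\tau_\hbar^{ir}}$ accumulated from your normalisation. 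Both routes need $R$ left and right $G$-noetherian (you use it to make $Y^i$ and $\uExt_{\bar R}^{i,G}(\bar R\otimes_RM,\bar R)$ finitely generated; the paper uses it to make $Q^*$ finitely generated and $R[\hbar t]$ noetherian), and since $Q^*$ consists of $\hbar$-torsionfree modules the two constructions produce the same spectral sequence; but your exact-couple version buys a cleaner stabilization step that avoids the Artin--Rees computation, while the paper's filtration version gives a concrete description of every page as a subquotient of $Q^{p+q}$. To turn your sketch into a complete proof you would still have to carry out the twist bookkeeping you flag (checking that the derived-couple differentials, which pick up one factor $\Sigma_{-\varepsilon}(-)^{\tau_\hbar}$ from $k$ and one from each division by $\rho_\hbar$, become degree-one morphisms in $\Mod^G\bar R^o$ after the normalisation $\Sigma_{-ir\varepsilon}(-)^{\tau_\hbar^{ir}}$ in position $i$ of page $r$), and note that the lemma only asks properties (1)--(3) of \emph{some} sequence of complexes, so nothing forces you to reproduce the paper's filtration pages; this also suffices for the later use in Proposition \ref{grade-cut-at-infinity}, where only (1)--(3) and the formal consequence $E_{r+1}^p\hookrightarrow\Sigma_{-p\varepsilon}(E_r^p)^{\tau_\hbar^p}$ when $E_r^{p-1}=0$ are invoked.
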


The demonstration of this lemma involves the spectral sequence technique.

\begin{proof}
Choose a projective resolution $P^*\to M$ of $M$ in $\Mod^GR$ with each term finitely generated. Then $\bar{R}\otimes_RP^*\to \bar{R}\otimes_R M$ is a projective resolution of $\bar{R}\otimes_R M$ in $\Mod^G\bar{R}$ and $\uHom_R^G(P^*,R)$ is a complex in $\Mod^GR^o$ with each  term  finitely generated and $\hbar$-torsionfree. Note that $$\uHom_R^G(P^*,R)\otimes_R\bar{R} \cong \uHom_{\bar{R}}^G(\bar{R}\otimes_RP^*,\bar{R})$$
as complexes in $\Mod^G\bar{R}^o$. To save notations we write $(Q^*,d^*):= \uHom_{R}^G(P^*,R)$. For subgroups $N\subseteq Q^m$  and integers $n\geq 1$, we set $N\hbar^{-n}:= \{\,x\in Q^m\,|\, x\hbar^n\in N\,\}$. For all $r\geq1$ and all $p, q\in \Z$, we define
$$
E_r^{p,q} := \frac{Q^{p+q}\hbar^p \cap (d^{p+q})^{-1}(Q^{p+q+1}\hbar^{p+r} ) + Q^{p+q}\hbar^{p+1}}{Q^{p+q}\hbar^p \cap \im (d^{p+q-1})\, \hbar^{p+1-r}+ Q^{p+q}\hbar^{p+1}}.
$$
Then the differential $d^*$ induces morphisms  $d_r^{p,q}: E_r^{p.q} \to E_r^{p+r,q-r+1}$ in $\Mod^G\bar{R}^o$. It is tedious but straightforward to check that  $d_r^{p,q}\circ d_r^{p-r,q+r-1}=0$ and $\ker (d_r^{p,q})/\im(d_r^{p-r,q+r-1}) \cong E_{r+1}^{p,q}$.

For all $r\geq1$ and all $n\in \Z$, let $E_r^n= E_r^{nr,n(1-r)}$ and $d_r^n= d_r^{nr,n(1-r)}$. We are going to show the sequence of complexes $(E_1^*,d_1^*), (E_2^*,d_2^*),\cdots$ in $\smod^G\bar{R}^o$ fulfills the requirements. Clearly,
$$
E_1^i \cong \Sigma_{-i\varepsilon} (E_1^{0,i})^{{{\tau_\hbar}}^i} \cong \Sigma_{-i\varepsilon} H^i(Q^*\otimes_R\bar{R})^{{{\tau_\hbar}}^i} \cong \Sigma_{-i\varepsilon} \uExt_{\bar{R}}^{i,G} (\bar{R}\otimes_RM,\bar{R})^{{{\tau_\hbar}}^i}
$$
and
$$
E_r^i \cong \Sigma_{-i\varepsilon} (E_r^{i(r-1),i(2-r)})^{{{\tau_\hbar}}^i} \cong \Sigma_{-i\varepsilon}H^i(E_{r-1}^*,d_{r-1}^*)^{{{\tau_\hbar}}^i}
$$
for all $r\geq 2$ in $\Mod^G\bar{R}^o$, so it remains to show the requirement (3) holds.

It is easy to check that the $\bar{G}$-graded ring $A:=\sum\nolimits_{n\geq0} (R\hbar^n)t^n =R[\hbar t] \subseteq R[t]$, where $\bar{G}=\Z\times G$, is left and right $\bar{G}$-noetherian. So the $\bar{G}$-graded right $A$-module $U^{i+1} := \sum_{n\geq 0} (Q^{i+1}\hbar^n) t^n=Q^{i+1}[\hbar t] \subseteq Q^{i+1}[t]$ and the submodule $V^{i+1}: = \sum_{n\geq 0} (\im (d^i)\cap Q^{i+1}\hbar^n)t^n \subseteq U^{i+1}$ are $\bar{G}$-noetherian. Choose $\bar{G}$-homogeneous elements $x_1t^{m_1} ,\cdots,x_st^{m_s} $  such that $V^{i+1}=\sum_{j=1}^sx_jt^{m_j}A$. Then for all $r> \max\{\,m_1,\cdots,m_s\,\}$, one has
$\im (d^i)\cap Q^{i+1}\hbar^r = \sum\nolimits_{j=1}^s x_jR\hbar^{r-m_i} \subseteq \im(d^i)\, \hbar$ which yields that
$$(d^i)^{-1}( Q^{i+1}\hbar^r)+ Q^i\hbar = \ker(d^i)+ Q^i\hbar. $$
Indeed, if $d^i(y)\in Q^{i+1}\hbar^{r}$ then $d^i(y)=d^i(z)\hbar$ for some $z\in Q^i$ and hence $y=y-z\hbar +z\hbar\in \ker(d^i)+Q^i\hbar$. In addition, since $Q^i$ is $G$-noetherian, one has for $r\gg1$ that
$$
\im(d^{i-1})\hbar^{1-r} + Q^i\hbar = \cup_{n\geq1} ( \im(d^{i-1})\hbar^{1-n} + Q^i\hbar).
$$
Therefore, we conclude for  $r\gg1$ that
\begin{eqnarray*}
E_r^{0,i} = \frac{\ker(d^i)+Q^i\hbar }{\cup_{n\geq1}(\im(d^{i-1})\hbar^{1-n} + Q^i\hbar)} \cong \frac{\ker(d^i)}{\cup_{n\geq1}\im(d^{i-1})\hbar^{1-n} + \ker(d^i)\hbar} \cong F_\hbar(H^i(Q^*,d^*))\otimes_R\bar{R},
\end{eqnarray*}
where the first isomorphism is established from the observation $ \ker(d^i)\cdot\hbar = \ker(d^i)\cap (Q^i\cdot\hbar) $.  Finally, since $E_r^i\cong \Sigma_{-ir\varepsilon} (E_r^{0,i})^{{{\tau_\hbar}}^i}$ in $\Mod^G\bar{R}^o$,  the requirement (3) is satisfied.
\end{proof}

\begin{theorem}\label{Auslander-cut-at-infinity-general}
Suppose that $R$ is left and right $G$-noetherian. Then
\begin{enumerate}
\item $R$ is $G$-Gorenstein if and only if $\bar{R}$ and $\hat{R}$ are so.
\item $R$ is $G$-Auslander-Gorenstein if and only if $\bar{R}$ and $\hat{R}$ are so.
\item $R$ is $G$-Auslander-regular provided that $\bar{R}$ and $\hat{R}$ are so.
\end{enumerate}
\end{theorem}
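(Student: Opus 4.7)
The plan is to prove (1) first and then to deduce (2) and (3) by combining (1) with the injective- and global-dimension comparisons of Section 3. For the forward direction of (1), $\bar{R}$ and $\hat{R}$ inherit noetherianness from $R$ (as quotient and as Ore localization, respectively); the Auslander condition passes to $\bar{R}$ via two applications of Rees' Lemma, which turn a homogeneous $\bar{R}^o$-submodule $K$ of $\uExt_{\bar{R}}^{i,G}(L,\bar{R})$ into an $\hbar$-annihilated $R^o$-submodule of $\uExt_R^{i+1,G}(L,R)$ and back; and it passes to $\hat{R}$ via Lemma 3.3 (exactness of Ore localization on $\uExt$ of finitely generated modules).

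For the converse of (1), the closure of $\smod^G_{au}R$ under extensions (Lemma 2.10) applied to $0\to T_\hbar(M)\to M\to F_\hbar(M)\to 0$ reduces the check to $M\in\smod^GR$ being either $\hbar$-torsion or $\hbar$-torsionfree. The $\hbar$-torsion case is dispatched by filtering $M$ by powers of $\hbar$ (a finite filtration since $M$ is finitely generated), whose factors lie in $\smod^G\bar{R}$, a second application of Lemma 2.10, and the dual of the forward direction via Rees' Lemma and the Auslander condition on $\bar{R}$. The $\hbar$-torsionfree case is the technical heart: setting $X^i:=\uExt_R^{i,G}(M,R)$ and taking a homogeneous $R^o$-submodule $N\subseteq X^i$, we split $N$ via $0\to T_\hbar(N)\to N\to F_\hbar(N)\to 0$ and treat each piece separately (the condition ``every submodule has $R^o$-grade $\geq i$'' is preserved under extensions and subquotients by Lemma 2.5). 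For $T_\hbar(N)$, Lemma 4.2 exhibits the left $\hbar$-annihilator of $X^i$ as a homomorphic image of $\uExt_{\bar{R}}^{i-1,G}(\bar{R}\otimes_R M,\bar{R})$, and induction along the filtration $\{\,\{\,x\in X^i : x\hbar^n=0\,\}\,\}_{n\geq 0}$ (combined with the Auslander condition on $\bar{R}$ and Rees' Lemma) supplies the required grade estimate. For $F_\hbar(N)$, the Auslander condition on $\hat{R}$ and Lemma 3.3 force $\uExt_{R^o}^{j,G}(F_\hbar(N),R)$ to be $\hbar$-torsion for every $j<i$, and the $R^o$-analogue of Lemma 4.2 then upgrades this $\hbar$-torsion to vanishing provided we have $\grad_{\bar{R}^o}^G(\bar{R}\otimes_R F_\hbar(N))\geq i-1$. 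The latter estimate is where Lemma 4.3 enters: its first page is (a shift and twist of) $\uExt_{\bar{R}}^{i,G}(\bar{R}\otimes_R M,\bar{R})$, whose submodules have $\bar{R}^o$-grade $\geq i$ by the Auslander condition on $\bar{R}$; the subquotient-closure propagates this grade bound through all pages to the stable page $F_\hbar(X^i)\otimes_R\bar{R}$, and one then compares $\bar{R}\otimes_R F_\hbar(N)$ with $F_\hbar(X^i)\otimes_R\bar{R}$ via the Tor-sequence for $\bar{R}\otimes_R-$. The main obstacle I anticipate is precisely this last comparison: the kernel of $\bar{R}\otimes_R F_\hbar(N)\to F_\hbar(X^i)\otimes_R\bar{R}$ is a quotient of the $\hbar$-annihilator of $F_\hbar(X^i)/F_\hbar(N)$ and must be controlled separately, likely by iterating the argument along the $\hbar$-saturation chain of $F_\hbar(N)$ in $F_\hbar(X^i)$.

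Given (1), Proposition 3.4 applied to $N=R$ (which is $\hbar$-torsionfree since $\hbar$ is regular), once on the left and once on the right, yields
\[
\idim_R^GR=\max\bigl\{\,\idim_{\bar{R}}^G\bar{R}+1,\ \idim_{\hat{R}}^G\hat{R}\,\bigr\}\quad\text{and}\quad \idim_{R^o}^GR=\max\bigl\{\,\idim_{\bar{R}^o}^G\bar{R}+1,\ \idim_{\hat{R}^o}^G\hat{R}\,\bigr\},
\]
which together with (1) immediately gives (2). For (3), Proposition 3.6 gives $\gldim(\Mod^GR)=\max\{\,\gldim(\Mod^G\bar{R})+1,\,\gldim(\Mod^G\hat{R})\,\}$ whenever $\gldim(\Mod^G\bar{R})$ is finite, so if $\bar{R}$ and $\hat{R}$ are both $G$-Auslander-regular then $R$ has finite global dimension on both sides; combined with (1), this gives (3).
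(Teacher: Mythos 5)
Your route is the same as the paper's: (2) and (3) are deduced from (1) via Propositions \ref{idim-cut-at-infinity-general} and \ref{gldim-cut-at-infinity-general} exactly as you write, and for (1) the paper likewise transfers the Auslander condition to $\bar{R}$ by two applications of Rees' Lemma, to $\hat{R}$ by Lemma \ref{localization-ext-general}, and proves the converse by reducing (via extension-closure and the $\hbar$-power filtration) to $\hbar$-torsion and $\hbar$-torsionfree modules, then splitting $\uExt_R^{i,G}(M,R)$ into its $\hbar$-torsion part (annihilator filtration plus Lemma \ref{long-exact-sequence-regular} plus Rees) and its $\hbar$-torsionfree part, where the ``torsion from $\hat{R}$ plus torsionfree from a $\bar{R}^o$-grade bound implies zero'' mechanism and the identification of $F_\hbar(\uExt_R^{i,G}(M,R))\otimes_R\bar{R}$ as a subquotient of $\uExt_{\bar{R}}^{i,G}(\bar{R}\otimes_RM,\bar{R})$ via Lemma \ref{spectral-sequence} are exactly the paper's ingredients. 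All of these steps are sound as you state them.

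The genuine gap is the step you flag yourself and leave unresolved: passing the grade bound from $F_n\otimes_R\bar{R}$ (where $F_n=F_\hbar(\uExt_R^{n,G}(M,R))$) to $L\otimes_R\bar{R}$ for a submodule $L=F_\hbar(N)\subseteq F_n$. Your suggested fix, iterating along the saturation chain $L^{(k)}=\{x\in F_n\,|\,x\hbar^k\in L\}$, is circular: each factor $L^{(k)}/L^{(k-1)}$ embeds (up to shift and twist, via right multiplication by $\hbar$) into $L^{(1)}/L$, and $L^{(1)}/L$ embeds into $L\otimes_R\bar{R}$, whose grade is precisely what you are trying to bound, so the iteration never produces an independent estimate. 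The paper breaks the circle differently, and this is the crux of its Claim 3: set $L'=\{x\in F_n\,|\,x\hbar^i\in L\text{ for }i\gg0\}$; since $F_n/L'$ is $\hbar$-torsionfree, $L'\otimes_R\bar{R}$ genuinely injects into $F_n\otimes_R\bar{R}$, so your torsion/torsionfree argument applies verbatim to $L'$ and gives $\grad_{R^o}^GL'\geq n$. Then $L'/L$ is filtered by $L_i'=L'\hbar^i+L$ (a finite filtration by noetherianness) whose factors are $\hbar$-annihilated subquotients of $F_n\otimes_R\bar{R}$ — the top factor is $(L'/L)\otimes_R\bar{R}$, a quotient of $L'\otimes_R\bar{R}$, and $\rho_\hbar$ surjects each factor onto the next — hence have $\bar{R}^o$-grade $\geq n$ and therefore, by Rees' Lemma, $R^o$-grade $\geq n+1$. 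That extra $+1$ is exactly what the long exact sequence of $0\to L\to L'\to L'/L\to 0$ requires to conclude $\grad_{R^o}^GL\geq n$; without this (or an equivalent control of the kernel of $L\otimes_R\bar{R}\to F_n\otimes_R\bar{R}$), your argument for the torsionfree part does not close.
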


This theorem stems from \cite[Chapter III, Section 3.1, Theorem 6]{LV}, which deals with the Auslander-regularity in the ungraded setting under the stronger condition that $\hbar$ is regular central.

\begin{proof}
Clearly, (2) is a direct consequence of (1) and Proposition \ref{idim-cut-at-infinity-general}, and  (3) is a direct consequence of (1) and Proposition \ref{gldim-cut-at-infinity-general}.
By symmetry, it remains to show that  $\smod^G_{au}R=\smod^GR$ if and only if $\smod^G_{au}\bar{R}=\smod^G\bar{R}$ and $\smod^G_{au}\hat{R}=\smod^G\hat{R}$.
This can be read from the following three claims:
\begin{description}
\item[Claim 1] For any module $J\in\smod^G\bar{R}$,  $J\in \smod^G_{au}\bar{R}$ if and only if  $J\in \smod^G_{au}R$.
\item[Claim 2] For any module $K\in \smod^G R$, if $K\in \smod^G_{au}R$ then  $\hat{R}\otimes_RK \in \smod^G_{au}\hat{R}$.
\item[Claim 3] For any $\hbar$-torsionfree module $M\in \smod^GR$, if $\bar{R}\otimes_RM\in \smod^G_{au}\bar{R}$ and $\hat{R}\otimes_RM\in \smod^G_{au} \hat{R}$ then $M\in \smod^G_{au}R$.
\end{description}
Indeed, the desired forward implication is clear by Claim 1 and Claim 2. To see the desired converse implication, we assume $\smod^G_{au}\bar{R}=\smod^G\bar{R}$ and $\smod^G_{au}\hat{R}=\smod^G\hat{R}$. Fix an arbitrary  module $E\in \smod^GR$. Claim 1 tells us that  $\hbar^nT_\hbar (E)/\hbar^{n+1}T_\hbar (E)\in \smod^G_{au}R$ for every $n\geq0$. Since $\hbar^rT_\hbar(E)=0$ for some $r\geq1$, it follows that $T_\hbar(E)\in \smod^G_{au}R$ by Lemma \ref{Auslander-extension}. Also, by Claim 3, one has $F_\hbar(E)\in \smod^G_{au}R$. Apply Lemma \ref{Auslander-extension} again, one gets $E\in \smod^G_{au}R$. Thus, $\smod^G_{au}R=\smod^GR$ as required.

Now we proceed to justify the above three claims.

\textbf{Proof of Claim 1.} This is an easy consequence of Lemma \ref{Rees-lemma-regular}. We leave it to readers.

\textbf{Proof of Claim 2.}  Let $U$ be an arbitrary homogeneous $\hat{R}^o$-submodule of
$\uExt_{\hat{R}}^{n,G}(\hat{R}\otimes_RK,\hat{R})\cong \uExt_R^{n,G}(K,R)\otimes_R\hat{R}$.  Then there is a homogeneous $R^o$-submodule $K'$ of $\uExt_R^{n,G}(K,R)$ such that $U\cong K'\otimes_R\hat{R}$.  Therefore
$\uExt_{\hat{R}^o}^{i,G}(U,\hat{R}) \cong \hat{R}\otimes_R \uExt_{R^o}^{i,G}(K',R) = 0$ for all $i<n$. Thus $\hat{R}\otimes_RK\in \smod^G_{au}\hat{R}$.

\textbf{Proof of Claim 3.} By Lemma \ref{grade-short-exact-sequence}, to see $M\in \smod^G_{au}R$  it suffices to show every homogeneous $R^o$-submodule of $T_n=T_\hbar(\uExt_R^{n,G}(M,R))$  and of $F_n=F_\hbar(\uExt_R^{n,G}(M,R))$ have grade $\geq n$.

Let $N$ be an arbitrary homogeneous $R^o$-submodule of $T_n$. Let $N_{i}=\{\, x\in N\,|\, x\hbar^i=0\,\}$ for all $i\geq0$. One has $0=N_{0}\subseteq N_{1}\subseteq \cdots \subseteq N_{r} = N$ for some $r\gg0$ because $N$ is $G$-noetherian. The right multiplication by $\hbar$ induces injective morphisms in $\Mod^GR^o$ as follows:
$$ \cdots \hookrightarrow \Sigma_{-2\varepsilon}(N_3/N_{2})^{{{\tau_\hbar}}^{2}} \hookrightarrow \Sigma_{-\varepsilon}(N_{2}/N_{1})^{{\tau_\hbar}}\hookrightarrow N_1/N_0.$$
Then, for $i\geq0$, $\Sigma_{-i\varepsilon}(N_{i+1}/N_i)^{{{\tau_\hbar}}^i}$ is a subquotient of $\Sigma_{\varepsilon}\uExt_{\bar{R}}^{n-1,G}(\bar{R}\otimes_R M,\bar{R})^{\tau_\hbar^{-1}}$ by Lemma \ref{long-exact-sequence-regular} and thereof $\grad_{R^o}^GN_{i+1}/N_i = \grad_{\bar{R}^o}^GN_{i+1}/N_i +1 \geq n$ by Lemma \ref{Rees-lemma-regular}. Thus $\grad_{R^o}^GN \geq n$ by Lemma \ref{grade-short-exact-sequence}.

Let $L$ be an arbitrary homogeneous $R^o$-submodule of $F_n$. Let
$L'=\{\, x\in F_n\,|\, x\hbar^i\in L \text{ for }i\gg0\}$. Apply the long $\uExt$-sequence associated to the exact sequence $0\to L\to L'\to L'/L\to 0$, to see $\grad_{R^o}^GL\geq n$ it suffices to show $\grad_{R^o}^GL'\geq n$ and $\grad_{R^o}^GL'/L\geq n+1$.

By Lemma \ref{spectral-sequence}, we have  that $F_n\otimes_R\bar{R}$ is a subquotient of $\uExt_{\bar{R}}^{n,G}(\bar{R}\otimes_RM,\bar{R})$ in $\Mod^G\bar{R}^o$. We also have $F_n\otimes_R\hat{R} \cong \uExt_R^{n,G}(M,R)\otimes_R\hat{R} \cong \uExt_{\hat{R}}^{n,G}(\hat{R}\otimes_RM,\hat{R})$ in $\Mod^G\hat{R}^o$.
Since $L'\otimes_R\bar{R}$ is a submodule of $F_n\otimes_R\bar{R}$ (because $F_n/L'= F_\hbar(F_n/L)$ is $\hbar$-trosionfree)
and $L'\otimes_R\hat{R}$ is a submodule of $F_n\otimes_R\hat{R}$, it follows that
$\grad_{R^o}^GL'\geq \min \{\, \grad_{\bar{R}^o}^GL'\otimes_R \bar{R}, \, \grad_{\hat{R}^o}^GL'\otimes_R\hat{R}\, \}\geq n.$

Let $L'_i= L'\hbar^i+L$ for all $i\geq0$. One has  $L'=L'_0\supseteq L'_1\supseteq \cdots \supseteq L'_r=L$ for some $r\gg0$ because $L'/L$ is $G$-noetherian.
The right multiplication by $\hbar$ induces surjective morphisms in $\Mod^GR^o$ as follows:
$$ L'_0/L'_1 \twoheadrightarrow \Sigma_\varepsilon(L'_1/L'_2)^{{{\tau_\hbar}}^{-1}} \twoheadrightarrow \Sigma_{2\varepsilon}(L'_1/L'_2)^{{{\tau_\hbar}}^{-2}} \twoheadrightarrow \cdots. $$
Since $L_0'/L_1'\cong L'/L\otimes_R\bar{R}$ in $\Mod^G\bar{R}^o$, $L_0'/L_1'$ is a subquotient of $F_n/L\otimes_R\bar{R}$. Consequently, for $i\geq0$, $\Sigma_{i\varepsilon}(L'_i/L'_{i+1})^{{{\tau_\hbar}}^{-i}}$ is a subquotient of $F_n\otimes_R\bar{R}$ and thereof
$\grad_{R^o}^G L'_i/L'_{i+1} = \grad_{\bar{R}^o}^GL'_i/L'_{i+1} +1  \geq n+1$ by Lemma \ref{Rees-lemma-regular}. Thus $\grad_{R^o}^GL'/L  \geq n+1$ by Lemma \ref{grade-short-exact-sequence}.
\end{proof}

\begin{proposition}\label{grade-cut-at-infinity}
Suppose $R$ is left and right $G$-noetherian, and $M\in \smod^GR$ is $\hbar$-torsionfree.  Then
\begin{enumerate}
\item $\grad_{\hat{R}}^G \hat{R}\otimes_RM \geq \grad_{R}^GM \geq \grad_{\bar{R}}^G\bar{R}\otimes_R M$ when $\hbar\in J^G(R)$.
\item $ \grad_{\hat{R}}^G \hat{R}\otimes_RM = \grad_{R}^GM \leq \grad_{\bar{R}}^G\bar{R}\otimes_R M$ when $\bar{R}$ is $G$-Auslander-Gorenstein.
\end{enumerate}
\end{proposition}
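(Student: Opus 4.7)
My plan is to decouple each chain of (in)equalities and drive them with the long exact sequences and spectral sequence from Lemmas \ref{Rees-lemma-regular}--\ref{spectral-sequence} together with Lemma \ref{localization-ext-general}: graded Nakayama furnishes the key non-vanishing for part~(1), and the Auslander condition on $\bar R$ for part~(2).

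For part~(1), the inequality $\grad_{\hat R}^G\hat R\otimes_RM\geq \grad_R^GM$ needs no hypothesis, because Lemma \ref{localization-ext-general} with $N=R$ presents $\uExt_{\hat R}^{i,G}(\hat R\otimes_RM,\hat R)$ as a colimit of shifted, twisted copies of $\uExt_R^{i,G}(M,R)$, which therefore vanishes throughout the range where the latter does. For the reverse inequality, set $n:=\grad_R^GM$; by Lemma \ref{long-exact-sequence-regular} at degree $n$, the cokernel of $\rho_\hbar$ on $\uExt_R^{n,G}(M,R)$ embeds into $\uExt_{\bar R}^{n,G}(\bar R\otimes_RM,\bar R)$, so it suffices to show $\uExt_R^{n,G}(M,R)\hbar\neq \uExt_R^{n,G}(M,R)$. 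This is the graded Nakayama Lemma applied to the nonzero finitely generated right $R$-module $\uExt_R^{n,G}(M,R)$; the hypothesis $\hbar\in J^G(R)$ transfers to $\hbar\in J^G(R^o)$ by normality, since any $1-s\hbar=1-\hbar\tau_\hbar^{-1}(s)$ is then a unit.

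For part~(2), I keep $n=\grad_R^GM$ and set $m:=\grad_{\bar R}^G\bar R\otimes_RM$; I would handle the equality $\grad_{\hat R}^G\hat R\otimes_RM=n$ and the inequality $n\leq m$ simultaneously by contradiction. Rees' Lemma \ref{Rees-lemma-regular} combined with the long $\uExt$-sequence of $0\to \Sigma_{-\varepsilon}{}^{\tau_\hbar^{-1}}M\to M\to \bar R\otimes_RM\to 0$ immediately contradicts $n>m$ when $m+1<n$; both the delicate case $m+1=n$ and the hypothetical vanishing $F_\hbar(\uExt_R^{n,G}(M,R))=0$ (which would break the desired equality in part~(2)) reduce, via the same long exact sequence at degree $n-1$, to the configuration $E_1^{n-1}\neq 0$ with $E_\infty^{n-1}=0$ and $E_1^i=0$ for $i\leq n-2$ in the spectral sequence of Lemma \ref{spectral-sequence}. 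The absence of incoming differentials forces an outgoing $d_r^{n-1}:E_r^{n-1}\to E_r^n$ to be eventually injective. The Auslander condition forces every subquotient of $E_1^i$ to have $\bar R^o$-grade $\geq i$ (the long-$\uExt$ chase is the same as in Claim~3 of the proof of Theorem \ref{Auslander-cut-at-infinity-general}), so every $E_r^i$ carries grade $\geq i$; if in addition nonzero subquotients of $E_1^{n-1}$ have grade exactly $n-1$, then the injection $E_r^{n-1}\hookrightarrow E_r^n$ lands a grade-$(n-1)$ submodule inside a module of grade $\geq n$, the desired contradiction.

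The main obstacle, I expect, is this last purity step: the Auslander condition alone delivers only a lower bound on grades, and upgrading it to the statement that nonzero subquotients of $\uExt_{\bar R}^{n-1}(\bar R\otimes_RM,\bar R)$ realize grade exactly $n-1$ requires the biduality mechanism powered by the finite two-sided injective dimension of $\bar R$---which is exactly why the theorem insists $\bar R$ be Auslander-\emph{Gorenstein} rather than merely $G$-Gorenstein.
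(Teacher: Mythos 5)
Your part (1) is correct and is in substance the paper's own argument: $\grad_{\hat R}^G\hat R\otimes_RM\geq\grad_R^GM$ is flat base change, and $\grad_R^GM\geq\grad_{\bar R}^G\bar R\otimes_RM$ follows from Lemma \ref{long-exact-sequence-regular} together with graded Nakayama applied to the finitely generated right module $\uExt_R^{n,G}(M,R)$ (the left--right symmetry of $J^G(R)$ you need is standard, cf.\ \cite[\S 2.9]{NV2}, and is used implicitly by the paper as well). Your reduction in part (2) is also sound: the long exact sequence excludes $n>m+1$; if $n\le m$ it makes $\uExt_R^{n,G}(M,R)$ $\hbar$-torsionfree, so both bad scenarios collapse to the single configuration $E_1^{n-1}\neq0$, $E_1^i=0$ for $i\le n-2$, $E_r^{n-1}=0$ for $r\gg1$ in Lemma \ref{spectral-sequence} --- which is exactly the configuration the paper analyses (at degree $p=\grad_{\bar R}^G\bar R\otimes_RM$), so up to this point you are following the paper's route.

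The genuine gap is the purity step you yourself flag, and as you state it the claim is false: nonzero \emph{subquotients} of $\uExt_{\bar R}^{n-1,G}(\bar R\otimes_RM,\bar R)$ need not have grade exactly $n-1$, because quotients can raise the grade --- already for $\bar R=\Z$ with trivial grading, the quotient $\Z/2\Z$ of $\uExt^{0}_{\Z}(\Z,\Z)=\Z$ has grade $1$, not $0$. What is true over an Auslander--Gorenstein ring is purity for nonzero \emph{submodules} of $\uExt^{j}(X,\bar R)$ with $j=\grad^G_{\bar R}X$, and that is itself a nontrivial theorem requiring the bidualizing machinery; you neither prove it nor cite it precisely, and you also do not record the observation that makes it applicable, namely that the vanishing of all incoming differentials exhibits each $E_r^{n-1}$ (up to shift and twist) as a submodule of $E_1^{n-1}$. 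The paper sidesteps quoting purity wholesale: it notes $\grad^G_{\bar R^o}E_1^{n-1}=n-1$ by the graded version of \cite[Proposition 1.6(2)]{Bj}, observes that each cokernel in the chain of monomorphisms $\cdots\hookrightarrow E_{r+1}^{n-1}\hookrightarrow E_r^{n-1}$ is a subquotient of $\uExt_{\bar R}^{n,G}(\bar R\otimes_RM,\bar R)$ and hence has grade $\ge n$ by the Auslander condition and Lemma \ref{grade-short-exact-sequence}, and then uses the exact minimum formula \cite[Proposition 1.8]{Bj} (this is where finite injective dimension enters) to force $\grad^G_{\bar R^o}E_{r+1}^{n-1}=n-1$, in particular $E_{r+1}^{n-1}\neq0$ for all $r$, contradicting $E_r^{n-1}=0$ for $r\gg1$. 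So your skeleton is repairable, but as written the proof of part (2) stops exactly at the point where the Auslander--Gorenstein hypothesis has to do its work.
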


\begin{proof}
Let $p:=\grad^G_{\bar{R}}\bar{R}\otimes_R M$. Then, by Lemma \ref{long-exact-sequence-regular}, $\uExt_R^{i,G}(M,R)$ is $\hbar$-torsionfree for every integer $i\leq p$ and $\uExt_R^{i,G}(M,R)\cdot \hbar =\uExt_R^{i,G}(M,R)$ for every integer $i<p$.

(1) Assume $\hbar\in J^G(R)$.  By the graded version of the Nakayama lemma, $\grad_R^GM$ and $\grad_{\hat{R}}^G\hat{R}\otimes_RM$ are both $\geq p$. Hence, to see the result, we may assume $p<\infty$. Then, by Lemma \ref{long-exact-sequence-regular}, $\grad_R^GM$ has only two possible choices, which are $p$ and $p+1$. The desired result now follows directly in both cases.

(2) Assume $\bar{R}$ is $G$-Auslander-Gorenstein. Note that the groups $\uExt_R^{i,G}(M,R)$ and
$\uExt_{\hat{R}}^{i,G}(\hat{R}\otimes_RM,\hat{R})$ are both zero or both nonzero for every integer $i\leq p$. Thus, we may assume $p<\infty$.

By Lemma \ref{spectral-sequence}, there is a sequence of monomorphisms in $\Mod^G\bar{R}^o$ as follows
$$
\cdots \xrightarrow{f_3} \Sigma_{3p\varepsilon} (E_3^p)^{{{\tau_\hbar}}^{-3p}} \xrightarrow{f_2} \Sigma_{2p\varepsilon} (E_2^p)^{{{\tau_\hbar}}^{-2p}}\xrightarrow{f_1} \Sigma_{p\varepsilon} (E_1^p)^{{{\tau_\hbar}}^{-p}} \cong \uExt_{\bar{R}}^{p,G}(\bar{R}\otimes_RM, \bar{R})
$$
such that
\begin{itemize}
\item $\coker(f_r)$ is a subquotient of $\Sigma_{-r\varepsilon} \uExt_{\bar{R}}^{p+1,G}(\bar{R}\otimes_RM,\bar{R})^{{{\tau_\hbar}}^r}$  for all $r\geq 1$; and
\item $\Sigma_{rp\varepsilon} (E_r^p)^{{{\tau_\hbar}}^{-rp}} \cong \uExt_R^{p,G}(M,R)\otimes_R\bar{R} $ for all $r\gg1$.
\end{itemize}
Indeed, use notations in Lemma \ref{spectral-sequence}, the monomorphism $f_r$ is just the composition
$$
E_{r+1}^p \xrightarrow{\cong} \Sigma_{-p\varepsilon} H^p(E_{r}^*, d_{r}^*)^{\tau_\hbar^p} = \Sigma_{-p\varepsilon} (\ker d_{r}^p)^{\tau_\hbar^p} \hookrightarrow  \Sigma_{-p\varepsilon} (E_{r}^p)^{\tau_\hbar^p}.
$$
Here, the second ``='' comes from the fact $E_{r}^{p-1}=0$.

Now, by the graded version of \cite[Proposition 1.6 (2), Proposition 1.8]{Bj}, we have
$$ p=\grad^G_{\bar{R}^o}E_1^p=\grad^G_{\bar{R}^o} E_{2}^p =\cdots = \grad_{\bar{R}^o}^G\uExt_R^{p,G}(M,R)\otimes_R\bar{R}.$$
Consequently, $\uExt_R^{p,G}(M,R)\neq 0$ and thereof $\grad_{\hat{R}}^G \hat{R}\otimes_RM = \grad_{R}^GM \leq p$.
\end{proof}

Next we focus on a special situation. We say  that the $G$-graded ring $R$ is $\hbar$-discrete if ${}_RR$ is $\hbar$-discrete, or equivalently $R_R$ is $\hbar$-discrete. It is easy to check that if $R$ is $\hbar$-discrete then $\hbar\in J^G(R)$.

\begin{proposition}(\cite[Lemma 1.11]{Ro})\label{noetherian-cut-at-infinity}
Suppose that $R$ is $\hbar$-discrete. Then $R$ is left (resp. right) $G$-noetherian if and only if $\bar{R}$ is too; and in this case so is $\hat{R}$.
\end{proposition}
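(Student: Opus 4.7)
The plan is to handle the easy implication plus the noetherianness of $\hat{R}$ by standard graded-ring arguments, and to prove the nontrivial direction using the associated graded construction for the $\hbar$-adic filtration, where the $\hbar$-discrete hypothesis forces the descent to terminate in finitely many steps. By symmetry I treat only the left case.

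If $R$ is left $G$-noetherian, then $\bar{R}=R/\hbar R$, being a $G$-graded quotient, is left $G$-noetherian; similarly $\hat{R}=R[\hbar^{-1}]$ is left $G$-noetherian, since every homogeneous left ideal $J$ of $\hat{R}$ equals $\hat{R}(J\cap R)$ (each homogeneous element of $\hat{R}$ has the form $\hbar^{-k}r$ for some homogeneous $r\in R$, because $\hbar$ is normal), and $J\cap R$ is finitely generated over $R$. It remains to show that if $\bar{R}$ is left $G$-noetherian then so is $R$.

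For this, first observe by induction on $n$, using the short exact sequences
$$0\to \hbar^{n-1}R/\hbar^{n}R\to R/\hbar^{n}R\to R/\hbar^{n-1}R\to 0$$
and the isomorphism $\bar{R}\cong \hbar^{n-1}R/\hbar^{n}R$ (up to a $G$-shift by $(n-1)\varepsilon$) given by right multiplication by $\hbar^{n-1}$, that each $R/\hbar^{n}R$ is a left $G$-noetherian $R$-module. Collecting these pieces yields the associated graded ring $\mathrm{gr}_\hbar(R):=\bigoplus_{n\geq 0}\hbar^{n}R/\hbar^{n+1}R$, which with its $\Z\times G$-grading is identified with a twisted polynomial ring $\bar{R}[T;\bar{\tau}_\hbar]$ over $\bar{R}$, where $\bar{\tau}_\hbar$ is the $G$-graded automorphism of $\bar{R}$ induced by $\tau_\hbar$. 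A graded Hilbert-basis argument then shows that this twisted polynomial ring is left $\Z\times G$-noetherian.

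Given a homogeneous left ideal $I\subseteq R$, the bi-graded left ideal
$$\mathrm{in}(I):=\bigoplus_{n\geq 0}\bigl(I\cap \hbar^{n}R+\hbar^{n+1}R\bigr)/\hbar^{n+1}R\ \subseteq\ \mathrm{gr}_\hbar(R)$$
is therefore finitely generated; lift bi-homogeneous generators to $x_1,\dots,x_s\in I$ with $x_i\in \hbar^{m_i}R\setminus \hbar^{m_i+1}R$. For any homogeneous $y\in I$ of $G$-degree $\gamma$, the $\hbar$-discreteness supplies an integer $n_\gamma$ with $(\hbar^{n}R)_\gamma=0$ for $n\geq n_\gamma$, so $v(y):=\max\{\,n\,|\,y\in \hbar^{n}R\,\}$ is a well-defined integer strictly less than $n_\gamma$. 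A downward induction on $n_\gamma-v(y)$, at each step using the generators of $\mathrm{in}(I)$ to write the image of $y$ in $\hbar^{v(y)}R/\hbar^{v(y)+1}R$ as a combination of the $x_i$'s and subtracting to increase the $\hbar$-order, terminates in finitely many steps and yields $y\in\sum_{i}Rx_i$. Hence $I$ is finitely generated. The main technical point is the $\Z\times G$-graded bookkeeping and the twists by $\tau_\hbar$ when identifying $\mathrm{gr}_\hbar(R)$ with the twisted polynomial ring; once this is arranged, the Hilbert basis step and the $\hbar$-discrete terminating descent are both routine.
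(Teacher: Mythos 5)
Your proof is correct, but it follows a genuinely different route from the paper. You prove the nontrivial implication ($\bar{R}$ left $G$-noetherian $\Rightarrow$ $R$ left $G$-noetherian) by forming the $\hbar$-adic associated graded ring $\mathrm{gr}_\hbar(R)\cong\bar{R}[T;\bar{\tau}_\hbar]$, invoking a graded Hilbert basis theorem for this skew polynomial ring, and lifting a finite set of bi-homogeneous generators of the initial ideal $\mathrm{in}(I)$ back to $I$, with $\hbar$-discreteness guaranteeing that the lifting descent stops after finitely many steps in each $G$-degree; this is the classical "noetherian associated graded ring $\Rightarrow$ noetherian filtered ring" mechanism, with discreteness replacing completeness. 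The paper instead argues by contradiction à la Rogalski: by Zorn's Lemma it picks a homogeneous left ideal $L$ maximal among the infinitely generated ones, notes $R/L$ is $G$-noetherian, and uses the exact sequence $0\to \hbar L'/\hbar L\to L/\hbar L\to L/\hbar L'\to 0$ with $L'=\{x\in R\mid \hbar x\in L\}$ to see that $L/\hbar L$ is finitely generated, whence $L$ is finitely generated by $\hbar$-discreteness, a contradiction. Your approach is constructive (no Zorn), and it in effect establishes along the way the graded Hilbert basis statement that the paper only obtains later, in Lemma \ref{behavior-polynomial-extension} (1), as a consequence of this very proposition; for that reason you must (as you do) prove the graded Hilbert basis step directly rather than quote it from the paper, which is indeed routine here since $(\Z\times G)$-homogeneous elements of $\bar{R}[T;\bar{\tau}_\hbar]$ are monomials $\bar{a}T^n$ and a homogeneous left ideal decomposes as $\bigoplus_n I_nT^n$ with $\bar{\tau}_\hbar^{-n}(I_n)$ an ascending chain. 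The paper's argument buys brevity and avoids setting up the associated graded ring and its grading bookkeeping; yours buys an explicit generating procedure and a self-contained graded Hilbert basis theorem. Your treatment of the easy direction and of $\hat{R}$ (via $J=\hat{R}(J\cap R)$ for homogeneous $J$) is fine and fills in what the paper leaves implicit.
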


\begin{proof}
We only need to show $R$ is left $G$-noetherian under the assumption that $\bar{R}$ is so. Suppose that $R$ has an infinitely generated homogeneous left ideal, then by Zorn's Lemma, we may choose a maximal one, say $L$. Note that $R/L$ is $G$-noetherian. Consider the exact sequence
$$
0\to \hbar L'/ \hbar L \to L/\hbar L \to L/\hbar L'\to 0,
$$
where $L'=\{x\in R\ |\ \hbar x\in L\}$. Clearly, $\hbar L'/\hbar L$ and $L/\hbar L' \cong (L+\hbar R)/\hbar R$ are finitely generated. Therefore  $L/\hbar L$ is finitely generated and hence $L$ is too (because $L$ is $\hbar$-discrete), a contradiction.
\end{proof}

\begin{theorem}\label{Auslander-cut-at-infinity-special}
Suppose that $R$ is $\hbar$-discrete. Then
\begin{enumerate}
\item $R$ is $G$-Gorenstein if and only if $\bar{R}$ is too; and in this case so is $\hat{R}$.
\item $R$ is $G$-Auslander-Gorenstein if and only if $\bar{R}$ is too; and in this case so is $\hat{R}$.
\item $\hat{R}$ and $R$ are $G$-Auslander-regular when $\bar{R}$ is so.
\end{enumerate}
\end{theorem}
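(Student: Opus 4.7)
The plan is to reduce the theorem to Theorem \ref{Auslander-cut-at-infinity-general}, exploiting two consequences of the $\hbar$-discreteness of $R$: first, $\hbar\in J^G(R)$, which makes Proposition \ref{grade-cut-at-infinity}(1) available and allows it to replace the role played by $\hat{R}$-Gorensteinness in Theorem \ref{Auslander-cut-at-infinity-general}; second, the noetherianness transfer of Proposition \ref{noetherian-cut-at-infinity}, which handles the noetherian clause of each of the three conditions at once.

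For part (1), the forward implication is immediate from Theorem \ref{Auslander-cut-at-infinity-general}(1). For the backward implication, I will establish a strengthening of Claim 3 from the proof of Theorem \ref{Auslander-cut-at-infinity-general}: if $\bar{R}$ is $G$-Gorenstein and $R$ is $\hbar$-discrete, then every $\hbar$-torsionfree $M\in \smod^GR$ lies in $\smod^G_{au}R$. The strategy follows the original Claim 3. Given a homogeneous $R^o$-submodule $K$ of $\uExt_R^{n,G}(M,R)$, the analysis of $T_\hbar(K)$ proceeds unchanged, using Lemma \ref{Rees-lemma-regular}, Lemma \ref{long-exact-sequence-regular} and $\bar{R}$-Gorensteinness. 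For the image of $K/T_\hbar(K)$ in $F_n := F_\hbar(\uExt_R^{n,G}(M,R))$, call it $L$, form the $\hbar$-saturation $L'\subseteq F_n$ so that $L'/L$ is $\hbar$-torsion and $F_n/L'$ is $\hbar$-torsionfree; the exact sequence $0\to L\to L'\to L'/L\to 0$ and Lemma \ref{grade-short-exact-sequence} reduce matters to $\grad_{R^o}^GL'\geq n$ and $\grad_{R^o}^GL'/L\geq n+1$. The bound on $L'/L$ follows as in the original Claim 3 using only $\bar{R}$-Gorensteinness. For the bound on $L'$ — where the original Claim 3 invoked $\hat{R}$-Gorensteinness — I use Proposition \ref{grade-cut-at-infinity}(1) (applicable since $\hbar\in J^G(R)$) to reduce to $\grad_{\bar{R}^o}^G\bar{R}\otimes_RL'\geq n$; the $\hbar$-torsionfreeness of $F_n/L'$ ensures $\bar{R}\otimes_RL'$ injects into $\bar{R}\otimes_RF_n$, which by Lemma \ref{spectral-sequence} is a subquotient (up to shift-twist) of $\uExt_{\bar{R}}^{n,G}(\bar{R}\otimes_RM,\bar{R})$. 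In a Gorenstein ring, every subquotient $U/V$ of $\uExt^{n,G}_{\bar{R}}(\bar{R}\otimes_RM,\bar{R})$ has grade $\geq n$: both $V$ and $U$ are submodules of this $\uExt^{n,G}$ and so have grade $\geq n$ by the Auslander condition, and the long $\uExt$-sequence of $0\to V\to U\to U/V\to 0$ then forces $\grad U/V\geq n$. This completes the strengthened Claim 3. Combining it with Lemma \ref{Auslander-extension} and Claim 1 of Theorem \ref{Auslander-cut-at-infinity-general} (which needs no alteration), and the symmetric argument on $R^o$, shows $R$ is $G$-Gorenstein; Theorem \ref{Auslander-cut-at-infinity-general}(1) then gives $\hat{R}$ is $G$-Gorenstein.

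Parts (2) and (3) follow by combining part (1) with the results of Section 3. For (2), if $\bar{R}$ is $G$-Auslander-Gorenstein, then by (1) both $R$ and $\hat{R}$ are $G$-Gorenstein, and Proposition \ref{idim-cut-at-infinity-special} applied to $N={}_RR$ (which is $\hbar$-torsionfree and $\hbar$-discrete) yields $\idim_R^GR = \idim_{\bar{R}}^G\bar{R}+1<\infty$ and $\idim_{\hat{R}}^G\hat{R}\leq \idim_{\bar{R}}^G\bar{R}<\infty$; symmetric inequalities on the right side come from applying the same proposition to $R^o$. The converse direction uses Theorem \ref{Auslander-cut-at-infinity-general}(2) and the same identities from Proposition \ref{idim-cut-at-infinity-special}. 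For (3), if $\bar{R}$ is $G$-Auslander-regular then condition (a) of Proposition \ref{gldim-cut-at-infinity-special} is met, giving $\gldim(\Mod^G\hat{R})\leq \gldim(\Mod^G\bar{R})<\infty$ and $\gldim(\Mod^GR)=\gldim(\Mod^G\bar{R})+1<\infty$; together with the Gorensteinness from (1), both $R$ and $\hat{R}$ are $G$-Auslander-regular.

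The main obstacle is the strengthened Claim 3 in the second paragraph: the replacement of $\hat{R}$-Gorensteinness by Proposition \ref{grade-cut-at-infinity}(1) hinges on $\hbar\in J^G(R)$, which is guaranteed by $\hbar$-discreteness but fails in general; once this replacement is in place, the rest of the proof is bookkeeping built on Theorem \ref{Auslander-cut-at-infinity-general} and the injective/global dimension results of Section 3.
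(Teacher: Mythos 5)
Your proposal is correct, and its overall skeleton coincides with the paper's: Proposition \ref{noetherian-cut-at-infinity} disposes of the noetherian clauses, parts (2) and (3) are deduced from part (1) together with Propositions \ref{idim-cut-at-infinity-special} and \ref{gldim-cut-at-infinity-special} (applied to $N={}_RR$, resp.\ via condition (a)), and the decisive new input is exactly the one the paper uses, namely that $\hbar$-discreteness gives $\hbar\in J^G(R)$ so that Proposition \ref{grade-cut-at-infinity}(1) can replace the Gorenstein hypothesis on $\hat{R}$. The one genuine difference is the organization of the key step in part (1). The paper does \emph{not} re-prove the Auslander condition for $R$; instead it shows directly that ``$\bar{R}$ $G$-Gorenstein implies $\hat{R}$ $G$-Gorenstein'': for $\hbar$-torsionfree $M\in\smod^GR$ and a homogeneous $\hat{R}^o$-submodule $V$ of $\uExt^{n,G}_{\hat{R}}(\hat{R}\otimes_RM,\hat{R})$ it identifies $F_n=F_\hbar(\uExt^{n,G}_R(M,R))$ inside this module, and estimates the grade of $V$ through $(V\cap F_n)\otimes_R\bar{R}$ using Proposition \ref{grade-cut-at-infinity}(1), Lemma \ref{spectral-sequence} and Lemma \ref{grade-short-exact-sequence}; Theorem \ref{Auslander-cut-at-infinity-general}(1) then delivers that $R$ is $G$-Gorenstein. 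You run the implication in the opposite order: you re-execute Claim 3 of Theorem \ref{Auslander-cut-at-infinity-general} with the $\hat{R}$-input replaced by Proposition \ref{grade-cut-at-infinity}(1) precisely at the bound $\grad^G_{R^o}L'\geq n$ (indeed the only place the $\hat{R}$-hypothesis entered), conclude that $R$ is $G$-Gorenstein, and only then invoke Theorem \ref{Auslander-cut-at-infinity-general}(1) to obtain $\hat{R}$. Both arguments rest on the same two facts --- the subquotient control of $F_n\otimes_R\bar{R}$ from Lemma \ref{spectral-sequence} and the grade comparison under $\hbar\in J^G(R)$ --- so the trade-off is modest: the paper's version is shorter, since intersecting $V$ with $F_n$ replaces your torsion-part and saturation bookkeeping, while your version makes the Auslander condition for $R$ itself explicit without ever passing through modules over $\hat{R}$. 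All remaining steps, including the left--right symmetry and the assembly via Claim 1 and Lemma \ref{Auslander-extension}, agree with the paper.
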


\begin{proof}
By Proposition \ref{noetherian-cut-at-infinity}, one may assume in priori that $R$, $\bar{R}$ and $\hat{R}$ are all left and right $G$-noetheiran. Then, clearly, (2) is a direct consequence of (1) and Proposition \ref{idim-cut-at-infinity-special}, and (3) is a direct consequence of (1) and Proposition \ref{gldim-cut-at-infinity-special}. Now, by  Theorem \ref{Auslander-cut-at-infinity-general} (1), we are left to see ``$\bar{R}$ is $G$-Gorenstein implies $\hat{R}$ is too''. By symmetry, it suffices to show that for any $n\geq0$ and any $\hbar$-torsionfree module $M\in \smod^GR$, it follows that every homogenous $\hat{R}^o$-submodule $V$ of $\uExt_{\hat{R}}^{n,G}(\hat{R}\otimes_RM,\hat{R})$ has grade $\geq n$.

Let $D_n=\uExt_{\hat{R}}^{n,G}(\hat{R}\otimes_RM,\hat{R})$ and $F_n=F_\hbar(\uExt_R^{n,G}(M,R))$. One may identify $F_n$ with a homogeneous $R^o$-submodule of $D_n$ because $F_n\otimes_R\hat{R} \cong D_n$ in $\Mod^G\hat{R}^o$. Since $F_n/(V\cap F_n)$ is $\hbar$-torsionfree, $(V\cap F_n)\otimes_R\bar{R}$ is a submodule of $F_n\otimes_R\bar{R}$ and hence a subquotient of $\uExt_{\bar{R}}^n(\bar{R}\otimes_R M,\bar{R})$ by Lemma \ref{spectral-sequence}. Therefore, $$\grad^G_{\hat{R}^o}V = \grad_{\hat{R}^o}^G(V\cap F_n)\otimes_R\hat{R}\geq \grad^G_{\bar{R}^o}(V\cap F_n)\otimes_R\bar{R}\geq n,$$
where the first ``$\geq$'' used  Proposition \ref{grade-cut-at-infinity} (1) and the second ``$\geq$'' used Lemma \ref{grade-short-exact-sequence}.
\end{proof}

\section{Proof of the main results}

In this section, we  prove Theorems \ref{Auslander-regrading} and \ref{homo-dim-regrading-general}.

We employ the the following notations. For a group homomorphism $\varphi:G\to H$, let $r_\varphi$ denote the rank of $\ker\varphi$; and for an element $\varepsilon \in G$, let $\pi_\varepsilon:\bar{G}\to G$ be the group homomorphism given by $(n,\gamma)\mapsto n\varepsilon+\gamma$, where $\bar{G}=\Z\times G$. We also recall the following two conventions. For a $G$-graded ring $A$, the polynomial ring $A[t]$ and Laurant ring $A[t,t^{-1}]$ are $\bar{G}$-graded by $\deg(at^i) = (i,\deg(a))$ when $a\in A$ is homogeneous; and for a $G$-graded ring $A$ and a semigroup $\Omega$ of $G$, the semigroup ring $A[\Omega]$ is $G$-graded by $\deg(ae_\gamma) = \deg (a) +\gamma$ for homogeneous  elements $a\in A$ and indexes $\gamma\in \Omega$.

\begin{lemma}\label{group-ring-isomorphism}
Let $A$ be a $G$-graded ring and $\varepsilon\in G\backslash T(G)$. Then there are natural isomorphisms  $\pi_\varepsilon^*(A[t]) \cong A[\N\varepsilon]$ and $\pi_\varepsilon^*(A[t,t^{-1}]) \cong A[\Z\varepsilon] \cong A[\N\varepsilon][e_\varepsilon^{-1}]$ as $G$-graded rings. \hspace{\fill} $\Box$
\end{lemma}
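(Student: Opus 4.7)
The plan is to write down an explicit $A$-linear candidate $\Phi_+: A[t] \to A[\N\varepsilon]$ by $at^i \mapsto a e_{i\varepsilon}$ for $i \geq 0$ and $a \in A$, and to verify it is a ring isomorphism that intertwines the $\pi_\varepsilon$-induced $G$-grading on the source with the standard $G$-grading on the target. The hypothesis $\varepsilon \notin T(G)$ is exactly what ensures that $i \mapsto i\varepsilon$ is a bijection $\N \to \N\varepsilon$, so $\{e_{i\varepsilon}\}_{i \geq 0}$ is a free $A$-basis of $A[\N\varepsilon]$ in one-to-one correspondence with $\{t^i\}_{i\geq 0}$; this gives bijectivity of $\Phi_+$ at the level of $A$-modules. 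Multiplicativity is immediate from the semigroup-ring formula $(ae_\alpha)(be_\beta) = (ab)e_{\alpha+\beta}$ recalled in the paper.

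Next I would check the grading. A homogeneous element $at^i$ with $a\in A_\alpha$ carries $\bar{G}$-degree $(i,\alpha)$ in $A[t]$, which $\pi_\varepsilon$ sends to $i\varepsilon+\alpha$; on the other side, $ae_{i\varepsilon}$ has $G$-degree $\alpha+i\varepsilon$ in $A[\N\varepsilon]$ by the convention on semigroup rings. These agree, so $\Phi_+$ is a graded ring isomorphism and the first claim follows. For the Laurent case, I would extend the same recipe to $\Phi: A[t,t^{-1}] \to A[\Z\varepsilon]$, $at^i \mapsto ae_{i\varepsilon}$ with $i \in \Z$; since $\varepsilon$ has infinite order, $i \mapsto i\varepsilon$ is again a group isomorphism $\Z \to \Z\varepsilon$, so the argument above transports verbatim.

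For the final identification $A[\Z\varepsilon] \cong A[\N\varepsilon][e_\varepsilon^{-1}]$, I would note that the elements $e_\gamma$ are central in any semigroup ring $A[\Omega]$ by the defining formula, so $e_\varepsilon$ is a central regular element of $A[\N\varepsilon]$ and $\{e_\varepsilon^i\}_{i \geq 0}$ is a (central) Ore set. Both rings then satisfy the universal property of inverting $e_\varepsilon$ over $A[\N\varepsilon]$: in $A[\Z\varepsilon]$ the inverse is explicitly $e_{-\varepsilon}$, and adjoining $e_{-\varepsilon}$ to $A[\N\varepsilon]$ produces every $e_{i\varepsilon}$ with $i<0$. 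Honestly there is no substantive obstacle here; the lemma is a bookkeeping statement, and the only role of $\varepsilon \notin T(G)$ is to guarantee that the index (semi)group $\N\varepsilon$ (resp. $\Z\varepsilon$) is free of rank one, so it faithfully models the exponents of $t$.
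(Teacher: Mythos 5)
Your proposal is correct and is exactly the routine verification the paper intends (the lemma is stated without proof): the explicit graded ring map $at^i\mapsto ae_{i\varepsilon}$, with $\varepsilon\notin T(G)$ guaranteeing that $i\mapsto i\varepsilon$ bijects $\N$ (resp. $\Z$) onto $\N\varepsilon$ (resp. $\Z\varepsilon$), and the central-localization argument for $A[\Z\varepsilon]\cong A[\N\varepsilon][e_\varepsilon^{-1}]$ are all as intended. The degree bookkeeping $\pi_\varepsilon(i,\alpha)=i\varepsilon+\alpha=\deg(ae_{i\varepsilon})$ is the whole content, and you have checked it.
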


The following lemma is crucial for our purpose.

\begin{lemma}\label{behavior-polynomial-extension}
Let $A$ be a $G$-graded ring and $\varepsilon\in G$.
\begin{enumerate}
\item If $A$ is left (resp. right) $G$-noetherian then so are $\pi_\varepsilon^*(A[t])$ and $\pi_\varepsilon^*(A[t,t^{-1}])$.
\item Assume $A$ is left $G$-noetherian. Then for every module $N\in\Mod^GA$, one has
$$
\idim_{\pi_\varepsilon^*(A[t,t^{-1}])}^G \pi_\varepsilon^*(A[t,t^{-1}])\otimes_AN \leq \idim_{\pi_\varepsilon^*(A[t])}^G \pi_\varepsilon^*(A[t]) \otimes_AN = \idim_A^G N +1.
$$
\item $\gldim(\Mod^G\pi_\varepsilon^*(A[t,t^{-1}])) \leq \gldim(\Mod^G\pi_\varepsilon^*(A[t])) = \gldim(\Mod^GA) +1$.
\item If $A$ is $G$-Gorenstein then so are $\pi_\varepsilon^*(A[t])$ and $\pi_\varepsilon^*(A[t,t^{-1}])$.
\item If $A$ is $G$-Auslander-Gorenstein  then so are $\pi_\varepsilon^*(A[t])$ and $\pi_\varepsilon^*(A[t,t^{-1}])$.
\item If $A$ is $G$-Auslander-regular then so are $\pi_\varepsilon^*(A[t])$ and $\pi_\varepsilon^*(A[t,t^{-1}])$.
\end{enumerate}
\end{lemma}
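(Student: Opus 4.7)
My plan is to apply the apparatus of Sections~\ref{thesection-3} and~4 with the specialization $R:=\pi_\varepsilon^*(A[t])$ and $\hbar:=t$. Since $t$ is a homogeneous central regular non-invertible element of $R$ of $G$-degree $\varepsilon$, one has $\bar R=R/tR\cong A$ and $\hat R=R[t^{-1}]\cong\pi_\varepsilon^*(A[t,t^{-1}])$, so that the triple $(R,\bar R,\hat R)$ fits exactly the template of those sections.

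For part~(1) I would first invoke the graded Hilbert basis theorem to see that $A[t]$ is left $\bar G$-noetherian, and then transfer this to the regrading $R$ through the $t$-adic filtration: the associated graded of any $G$-homogeneous left ideal of $R$ is a $\bar G$-homogeneous left ideal of $A[t]$, hence finitely generated, and its generators lift back. The $G$-noetherianness of $\hat R$ is then automatic since $\hat R$ is a central Ore localization of $R$ at $t$.

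For parts~(2) and~(3) I would apply Proposition~\ref{idim-cut-at-infinity-general} to $R\otimes_A N$ (which is $t$-torsionfree since $R$ is $A$-free) and Proposition~\ref{gldim-cut-at-infinity-general} to $R$, which yield
\[ \idim_R^G(R\otimes_A N)=\max\{\idim_A^G N+1,\ \idim_{\hat R}^G(\hat R\otimes_A N)\} \]
and the corresponding equality for $\gldim$. The localization inequalities on the left of the claimed chains come from the flatness of $R\to\hat R$. To close to equalities it remains to bound $\idim_{\hat R}^G(\hat R\otimes_A N)\le\idim_A^G N+1$ and $\gldim(\Mod^G\hat R)\le\gldim(\Mod^G A)+1$. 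For these I would use the length-one $\hat R^e$-projective resolution
\[ 0\to\hat R\otimes_A\hat R\xrightarrow{\,t\otimes 1-1\otimes t\,}\hat R\otimes_A\hat R\to\hat R\to0. \]
Tensoring with any left $\hat R$-module $M$ produces the exact sequence $0\to(\hat R\otimes_A M)(-\varepsilon)\xrightarrow{\alpha}\hat R\otimes_A M\to M\to0$ in $\Mod^G\hat R$, with $\alpha(r\otimes m)=tr\otimes m-r\otimes tm$; combining this with a graded $A$-projective (resp.\ injective) resolution of $M$ (resp.\ $N$) yields $\pdim_{\hat R}^G M\le\pdim_A^G M_A+1$ and dually $\idim_{\hat R}^G N\le\idim_A^G N_A+1$. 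The same bar argument with $A[t]$ in place of $A[t,t^{-1}]$ supplies $\pdim_R^G M\le\pdim_A^G M_A+1$, completing the equalities in~(2) and~(3).

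For parts~(4), (5), (6) I would apply Theorem~\ref{Auslander-cut-at-infinity-general} to the triple $(R,\bar R,\hat R)$, reducing the Gorenstein (resp.\ Auslander-Gorenstein, Auslander-regular) property of $R$ to the same property for $A$ (given) and for $\hat R$. The dimension bounds from~(2) and~(3) give the finite injective and global dimensions of $\hat R$ inherited from $A$, and the Auslander grade condition on homogeneous submodules of $\uExt_{\hat R}^{*,G}(-,\hat R)$ is transported from that of $A$ through the Ext comparison provided by the length-one bar sequence above. The main obstacle I anticipate is exactly this last Auslander-grade bookkeeping for $\hat R$: since $\hat R$ admits no non-invertible homogeneous regular element, one cannot iterate the $(R,\bar R,\hat R)$-reduction inside $\hat R$, and the required grade inequalities must be propagated directly through the bar resolution. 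Extra care is also needed in the torsion case $\varepsilon\in T(G)$, where Lemma~\ref{group-ring-isomorphism} does not supply the identification $\hat R\cong A[\Z\varepsilon]$.
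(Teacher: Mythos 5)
Your parts (2)--(3) are essentially workable (the bar/Koszul sequence $0\to R\otimes_AR\to R\otimes_AR\to R\to 0$ argument for the upper bounds, combined with Proposition \ref{idim-cut-at-infinity-general}, is a legitimate alternative to the paper's route, though for (3) note the statement carries no noetherian hypothesis while Proposition \ref{gldim-cut-at-infinity-general} does -- the paper just invokes the graded Hilbert syzygy theorem there). The genuine gap is in parts (4)--(6), and it is exactly the point you flag but do not resolve. Theorem \ref{Auslander-cut-at-infinity-general}(1) applied to $(R,\hbar)=(\pi_\varepsilon^*(A[t]),t)$ is an ``if and only if'': to conclude that $R$ is $G$-Gorenstein you must feed it the Gorensteinness of \emph{both} $\bar R\cong A$ and $\hat R\cong\pi_\varepsilon^*(A[t,t^{-1}])$, so your plan needs the Gorenstein property of the Laurent extension as an independent input. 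The proposed ``transport through the bar sequence'' does not provide it: a finitely generated graded $\hat R$-module $M$ is not finitely generated over $A$, so the comparison $\uExt_{\hat R}^{i,G}(\hat R\otimes_AM,\hat R)\cong\uExt_A^{i,G}(M,\hat R)$ does not reduce to $\uExt_A^{i,G}(M,A)$-data, and the required grade inequalities over $\hat R^o$ for arbitrary homogeneous submodules of $\uExt_{\hat R}^{i,G}(M,\hat R)$ cannot be read off from the Auslander condition on $A$ by this device. Without that step the argument for (4) is circular, and (5)--(6) fall with it.

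What you are missing is the paper's way around this: one cannot use the ``special'' results (Proposition \ref{noetherian-cut-at-infinity}, Proposition \ref{idim-cut-at-infinity-special}, Theorem \ref{Auslander-cut-at-infinity-special}) directly on $(\pi_\varepsilon^*(A[t]),t)$, because $\pi_\varepsilon^*(A[t])$ is in general \emph{not} $t$-discrete in the $G$-grading (e.g.\ when $\varepsilon\in T(G)$, or when $\supp A$ meets $\gamma-\N\varepsilon$ infinitely often); but it \emph{is} $t$-discrete when $A[t]$ is kept $\bar G$-graded. The paper therefore works upstairs: $\iota^*(A)$ is $\bar G$-Gorenstein by Lemma \ref{ext-regrading-injective}, Theorem \ref{Auslander-cut-at-infinity-special} applied first to $(A[t],t)$ and then to $(A[t][\N\eta],e_\eta)$ with $\eta=(1,-\varepsilon)$ lifts the property to $A[t][\Z\eta]$ \emph{using only the quotient}, and the dehomogenization equivalence of Lemma \ref{ext-regrading-surjective} (note $\pi_\varepsilon$ is surjective with kernel $\Z\eta$) carries it down to $\pi_\varepsilon^*(A[t])$. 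Only then is Theorem \ref{Auslander-cut-at-infinity-general}(1) invoked for $(\pi_\varepsilon^*(A[t]),t)$, in the ``only if'' direction, so that the Gorensteinness of $\pi_\varepsilon^*(A[t,t^{-1}])$ comes out as a consequence rather than going in as a hypothesis; the same two-step pattern ($\bar G$-graded discrete results plus dehomogenization) also underlies the paper's proofs of (1) and (2). Until you supply an actual proof of the Auslander condition for $\pi_\varepsilon^*(A[t,t^{-1}])$ (or reorganize the argument along these lines), parts (4)--(6) of your proposal remain unproved.
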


\begin{proof}
First note that $\pi_\varepsilon^*(A[t,t^{-1}]) \cong \pi_\varepsilon^*(A[t])[t^{-1}]$ as $G$-graded rings, and $\pi_\varepsilon$ is surjective with $\ker \pi_\varepsilon= \Z \eta$, where $\eta=(1,-\varepsilon)$. It is easy to check that $A[t]$ is $t$-discrete and $A[t][\N\eta]$ is $e_\eta$-discrete. Also, $A[t]/(t) \cong \iota^*(A)$ as $\bar{G}$-graded rings, where $\iota:G\to \bar{G}$ is the map given by $\gamma\mapsto (0,\gamma)$.

(1) This is the graded version of the Hilbert basis theorem. We give here an alternative demonstration. Suppose $A$ is left $G$-noetherian. By Lemma \ref{ext-regrading-injective}, $\iota^*(A)$ is left $\bar{G}$-noetherian. Apply Proposition \ref{noetherian-cut-at-infinity} to the situation $(R,\hbar)= (A[t], t)$, one obtains that $A[t]$ is left $G$-noetherian; then apply Proposition \ref{noetherian-cut-at-infinity}  to the situation $(R,\hbar)=(A[t][\N\eta], e_\eta)$, it follows that $A[t][\N\eta]$ and $A[t][\Z\eta]$ are left $\bar{G}$-noetherian. Now by Lemma \ref{ext-regrading-surjective}, $\pi_\varepsilon^*(A[t])$ is left $G$-noetherian and thereof so is $\pi_\varepsilon^*(A[t,t^{-1}])$.

(2) By above discussion, $A[t]$ and $A[t][\N\eta]$ are left $\bar{G}$-noetherian. So we have
\begin{equation*}
\begin{split}
\idim_{\pi_\varepsilon^*(A[t])}^{G} \pi_\varepsilon^*(A[t]) \otimes_A N
& =  \idim_{A[t][\Z\eta]}^{\bar{G}} A[t][\Z\eta]\otimes_{\iota^*(A)}\iota_A^*(N) \\
& \leq  \idim_{A[t]}^{\bar{G}} A[t] \otimes_{\iota^*(A)}\iota_A^*(N)\\
& =  \idim_{\iota^*(A)}^{\bar{G}} \iota_A^*(N) +1 \\
&=   \idim_A^GN + 1.
\end{split}
\end{equation*}
Here, the first ``='' used Lemma \ref{ext-regrading-surjective}, the ``$\leq$'' and the second ``='' used Proposition \ref{idim-cut-at-infinity-special}, and the final ``='' used Proposition \ref{homo-dim-regrading-basic} (3). Also, $\pi_\varepsilon^*(A[t])$ is left $G$-noetherian by (1). Thereof, we have $$\idim_{\pi_\varepsilon^*(A[t])}^G \pi_\varepsilon^*(A[t]) \otimes_AN = \max\{\, \idim_A^G N +1, \, \idim_{\pi_\varepsilon^*(A[t,t^{-1}])}^G \pi_\varepsilon^*(A[t,t^{-1}])\otimes_AN\, \}$$
by applying Proposition \ref{idim-cut-at-infinity-general} to the situation $(R,\hbar) = (\pi_\varepsilon^*(A[t]), t)$. The result now follows.

(3) The ``$\leq$'' is clear and the ``='' is the graded version of  \cite[Theorem 7.5.3]{MR}.

(4) Assume $A$ is $G$-Gorenstein. By  Lemma \ref{ext-regrading-injective}, $\iota^*(A)$ is $\bar{G}$-Gorenstein. Apply Theorem \ref{Auslander-cut-at-infinity-special} (1) to the situation $(R,\hbar)= (A[t], t)$ and then to the situation $(R,\hbar)=(A[t][\N\eta], e_\eta)$, it follows that $A[t][\Z\eta]$ is $\bar{G}$-Gorenstein. Now by Lemma \ref{ext-regrading-surjective}, $\pi_\varepsilon^*(A[t])$ is $G$-Gorenstein. Thereof, $\pi_\varepsilon^*(A[t,t^{-1}])$ is also $G$-Gorenstein by applying Theorem \ref{Auslander-cut-at-infinity-general} (1) to the situation  $(R,\hbar) = (\pi_\varepsilon^*(A[t]), t)$.

Finally, (5) follows directly from (2) and (4), and (6) follows directly from (3) and (4).
\end{proof}

\begin{remark}
In the work \cite{Ek}, by using the technique of filtered rings and the external homogenization,  similar results of Lemma \ref{behavior-polynomial-extension} were established in the special situation that $G=\Z$ and $\varepsilon=1$. However, these two techniques loose their effectiveness in the general case.
\end{remark}

Now we are ready to prove the main results. First we deal with Theorem \ref{homo-dim-regrading-general}.

\begin{proof}[Proof of Theorem \ref{homo-dim-regrading-general}] \hspace{\fill}

(1) The first ``$\leq$''  is by Proposition \ref{homo-dim-regrading-basic} (2). We proceed to see the second ``$\leq$''  by induction on $r_\varphi$.

First consider the case  $r_\varphi=0$. It is clear by Proposition \ref{homo-dim-regrading-basic} (3).

Now suppose $r_\varphi>0$. Factor the map $\varphi$ into $G\xrightarrow{\psi} G' \xrightarrow{\varphi'} H$ such that $\psi$ is surjective, $\ker\psi = \Z\varepsilon$ for some $\varepsilon \in \ker\varphi\backslash T(\ker\varphi)$ and $T(\ker\varphi') \cong T(\ker\varphi)$. Then for every module $N\in \Mod^GA$, we have
\begin{equation*}
\idim_{\psi^*(A)}^{G'}\psi_{A}^*(N)
=  \idim_{A[\Z\varepsilon]}^G A[\Z\varepsilon]\otimes_AN = \idim_{\pi_\varepsilon^*(A[t,t^{-1}])}^G \pi_\varepsilon^*(A[t,t^{-1}])\otimes_AN \leq   \idim_A^GN + 1.
\end{equation*}
Here, the first ``$=$'' used Lemma \ref{ext-regrading-surjective}, the second ``='' used Lemma \ref{group-ring-isomorphism} and the ``$\leq$'' used  Lemma \ref{behavior-polynomial-extension} (2). Also, by Lemma \ref{group-ring-isomorphism} and Lemma \ref{behavior-polynomial-extension} (1), $A[\Z\varepsilon]$ is left $G$-noetherian; hence, by Lemma \ref{ext-regrading-surjective}, $\psi^*(A)$ is left $G'$-notherian.   Finally, since $r_{\varphi'} = r_\varphi-1$, the desired ``$\leq$'' follows by the induction hypothesis.

(2) The first ``$\leq$''  is by Proposition \ref{homo-dim-regrading-basic} (4). We proceed to see the second ``$\leq$''  by induction on $r_\varphi$.

First consider the case $r_\varphi=0$. We may assume  $\gldim(\Mod^GA) = d <\infty$. In addition, by Lemma \ref{ext-regrading-injective}, we may assume further that $\varphi$ is surjective. Let $M\in\Mod^GA[\ker\varphi]$ and let
$$
\cdots \to P_n \xrightarrow{\partial_n} \cdots \xrightarrow{\partial_2} P_1 \xrightarrow{\partial_1} P_0 \xrightarrow{\partial_0} M\to 0
$$
be a projective resolution in $\Mod^GA[\ker\varphi]$. Clearly, it is also a projective resolution of $M$ in $\Mod^GA$. So $K= \im \partial_d$ is projective in $\Mod^GA$. Therefore, there exists a morphism $f\in \Hom_{\Mod^GA}(K,P_d)$ such that $\partial_d'\circ f = \id_K$, where $\partial_d':P_d\to K$  is the co-restriction of $\partial_d$.  Let $$\tilde{f}:= (|T(\ker \varphi)|\cdot 1_A)^{-1}\cdot \sum\nolimits_{\gamma\in \ker\varphi} \lambda_{e_\gamma}\circ f \circ \lambda_{e_{-\gamma}}.$$
It is easy to check that $\tilde{f} \in \Hom_{\Mod^GA[\ker\varphi]}(K,P_d)$ and $\partial_d'\circ \tilde{f} = \id_K$, so $K$ is projective in $\Mod^GA[\ker\varphi]$.
Hence from Lemma \ref{ext-regrading-surjective} we obtain $\gldim(\Mod^H\varphi^*(A)) = \gldim(\Mod^GA[\ker\varphi])\leq d$.

Now suppose $r_\varphi>0$. Factor the map $\varphi$ into $G\xrightarrow{\psi} G' \xrightarrow{\varphi'} H$ such that $\psi$ is surjective, $\ker\psi = \Z\varepsilon$ for some $\varepsilon \in \ker\varphi\backslash T(\ker\varphi)$ and $T(\ker\varphi') \cong T(\ker\varphi)$. Then we have
$$
\gldim(\Mod^{G'}\psi^*(A)) = \gldim(\Mod^GA[\Z\varepsilon]) =\gldim(\Mod^G\pi_\varepsilon^*(A[t,t^{-1}]))  \leq  \gldim(\Mod^GA) +1.
$$
Here, the first ``$=$'' used  Lemma \ref{ext-regrading-surjective}, the second ``='' used Lemma \ref{group-ring-isomorphism} and the ``$\leq$'' used Lemma \ref{behavior-polynomial-extension} (3). Finally, since $r_{\varphi'} = r_\varphi-1$,  the desired ``$\leq$'' follows by the induction hypothesis.
\end{proof}

Since Theorem \ref{homo-dim-regrading-general} is already proved, we are able to prove Theorem \ref{Auslander-regrading}.

\begin{proof}[Proof of Theorem \ref{Auslander-regrading}] \hspace{\fill}

Clearly, (2) is a direct consequence of (1) and Theorem \ref{homo-dim-regrading-general} (1), and (3) is a direct consequence of (1) and Theorem \ref{homo-dim-regrading-general} (2). The  converse implication of (1) is also clear by Lemma \ref{ext-regrading-basic}. We assume $A$ is $G$-Gorenstein and proceed to show the forward implication of (1) by induction on $r_\varphi$.

First consider the case $r_\varphi=0$. By Lemma \ref{ext-regrading-injective}, we may assume $\varphi$ is surjective. To save the notations, we let $\Omega=\ker\varphi$. It is easy to check that the map $\uHom_{A}^G(A[\Omega], A) \to A[\Omega]$ given by $$f\mapsto \sum_{\gamma\in \Omega} f(e_{-\gamma}) e_\gamma$$ is an isomorphism of $G$-graded $A[\Omega]$-bimodules. Therefore, for every integer $i\in \Z$ and every module $M\in \Mod^G{A[\Omega]}$, we have the following natural isomorphisms in  $\Mod^GA[\Omega]^o$:
 $$\uExt^{i,G}_{A[\Omega]}(M,A[\Omega]) \cong \uExt^{i,G}_{A[\Omega]}(M,\uHom_A^G(A[\Omega],A)) \cong \uExt^{i,G}_A(A[\Omega] \otimes_{A[\Omega]} M,A) \cong \uExt^{i,G}_A(M,A).$$  It follows readily that $A[\Omega]$ is $G$-Gorenstein. Thereof, by Lemma \ref{ext-regrading-surjective}, $\varphi^*(A)$ is $H$-Gorenstein.

Now suppose $r_\varphi>0$. Factor the map $\varphi$ into $G\xrightarrow{\psi} G' \xrightarrow{\varphi'} H$ such that $\psi$ is surjective, $\ker\psi = \Z\varepsilon$ for some $\varepsilon \in \ker\varphi\backslash T(\ker\varphi)$ and $T(\ker\varphi) \cong T(\ker\varphi')$.  By Lemma \ref{group-ring-isomorphism} and Lemma \ref{behavior-polynomial-extension} (4),  $A[\Z\varepsilon]$ is $G$-Gorenstein. Then Lemma \ref{ext-regrading-surjective} tells us that $\psi^*(A)$ is $G'$-Gorenstein. Finally, since $r_{\varphi'} = r_\varphi-1$, it follows that $\varphi^*(A)=\varphi'^*(\psi^*(A))$ is $H$-Gorenstein by the induction hypothesis.
\end{proof}

By the same proof strategy as above, one may readily recover the following classical result.

\begin{proposition}\label{noetherian-regrading}
Let $A$ be a $G$-graded ring and $\varphi:G\to H$ a group homomorphism. Then, $A$ is left (resp. right) $G$-noetherian if and only if $\varphi^*(A)$ is left (resp. right) $H$-noetherian. \hspace{\fill} $\Box$
\end{proposition}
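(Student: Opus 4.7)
For the converse implication, I would observe that any $G$-homogeneous left ideal $I$ of $A$ is automatically $H$-homogeneous in $\varphi^*(A)$ since
$$I = \bigoplus_{\gamma\in G}(I\cap A_\gamma) = \bigoplus_{\delta\in H}\Big(\bigoplus_{\gamma\in\varphi^{-1}(\delta)}(I\cap A_\gamma)\Big) = \bigoplus_{\delta\in H}\bigl(I\cap\varphi^*(A)_\delta\bigr).$$
Hence any ascending chain of $G$-homogeneous left ideals of $A$ is an ascending chain of $H$-homogeneous left ideals of $\varphi^*(A)$ and must stabilize as soon as $\varphi^*(A)$ is left $H$-noetherian. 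The right-handed analogue is identical.

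For the forward implication, the plan is to induct on $r_\varphi$, mirroring the proof of Theorem \ref{Auslander-regrading}. In the base case $r_\varphi=0$, I would first factor $\varphi=\iota\circ\pi$ with $\pi:G\twoheadrightarrow G/\ker\varphi$ surjective and $\iota$ injective. For the injective factor, Lemma \ref{ext-regrading-injective} identifies the $H$-homogeneous components of $\iota^*(B)$ with the $(G/\ker\varphi)$-homogeneous components of $B$, so $H$-homogeneous left ideals of $\iota^*(B)$ are precisely the $(G/\ker\varphi)$-homogeneous left ideals of $B$, and noetherianity transfers directly. It then remains to handle $\pi$. Writing $\Omega:=\ker\varphi$, which is torsion of rank $0$ and hence finite under the standing hypothesis on the ambient groups, the semigroup ring $A[\Omega]$ is a finite direct sum of shifts of $A$ as a $G$-graded left $A$-module and is therefore left $G$-noetherian whenever $A$ is. The dehomogenization equivalence $\Xi_A^\pi:\Mod^G A[\Omega]\simeq\Mod^{G/\Omega}\pi^*(A)$ supplied by Lemma \ref{ext-regrading-surjective} respects the lattices of homogeneous left ideals, so $\pi^*(A)$ is left $(G/\Omega)$-noetherian.

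For the inductive step $r_\varphi>0$, I would reuse the decomposition from the proof of Theorem \ref{Auslander-regrading}: pick $\varepsilon\in\ker\varphi\setminus T(\ker\varphi)$ and factor $\varphi=\varphi'\circ\psi$ with $\psi:G\twoheadrightarrow G':=G/\Z\varepsilon$ surjective and $r_{\varphi'}=r_\varphi-1$. By Lemma \ref{group-ring-isomorphism}, $A[\Z\varepsilon]\cong\pi_\varepsilon^*(A[t,t^{-1}])$, and Lemma \ref{behavior-polynomial-extension}(1) tells us that this ring is left $G$-noetherian. Invoking Lemma \ref{ext-regrading-surjective} once more, $\psi^*(A)$ is left $G'$-noetherian. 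The induction hypothesis applied to $\varphi'$ then yields that $\varphi^*(A)={\varphi'}^*(\psi^*(A))$ is left $H$-noetherian, completing the induction.

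I expect no substantive obstacle in the assembly itself; the only nontrivial ingredient is Lemma \ref{behavior-polynomial-extension}(1), which in turn was deduced from the $R$/$\bar R$/$\hat R$ machinery of Section \ref{thesection-3} through Proposition \ref{noetherian-cut-at-infinity}. Once those tools are available, the present proposition is a formal bookkeeping exercise on $r_\varphi$, exactly parallel to Theorem \ref{Auslander-regrading}(1).
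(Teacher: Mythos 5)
Your proposal is correct and follows exactly the route the paper intends: the paper gives no explicit proof but states that Proposition \ref{noetherian-regrading} is recovered ``by the same proof strategy as above,'' i.e.\ induction on $r_\varphi$ using Lemmas \ref{ext-regrading-injective}, \ref{ext-regrading-surjective}, \ref{group-ring-isomorphism} and \ref{behavior-polynomial-extension}(1), which is precisely your assembly, with the easy direction handled by the standard observation that $G$-homogeneous ideals are $H$-homogeneous.
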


Theorem \ref{homo-dim-regrading-general} can be strengthened as follow by imposing a mild condition on the grading structure.

\begin{proposition}\label{homo-dim-regrading-special}
Let $A$ be a $G$-graded ring and $\varphi:G\to H$ a group homomorphism.
Suppose that $p(\supp A) \subseteq \N^r$ and $\ker p\cap \ker \varphi = T(\ker\varphi)$ for some group homomorphism $p:G\to \Z^r$, $r>0$.
\begin{enumerate}
\item Assume $A$ is left $G$-noetherian. Then for every module $N\in\smod^GA$, one has $$\idim_{\varphi^*(A)}^H \varphi^*_A(N) = \idim_A^GN.$$
\item Assume that the number of elements of $T(\ker\varphi)$ is invertible in $A$. Then $$\gldim(\Mod^H \varphi^*(A)) = \gldim(\Mod^GA).$$
\end{enumerate}
\end{proposition}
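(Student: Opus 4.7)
The plan is to follow the induction strategy from the proof of Theorem \ref{homo-dim-regrading-general}, but at each application of Proposition \ref{idim-cut-at-infinity-general} or Proposition \ref{gldim-cut-at-infinity-general} invoke instead the sharper Proposition \ref{idim-cut-at-infinity-special} or Proposition \ref{gldim-cut-at-infinity-special}(b), the positivity hypothesis being exactly what is needed to verify their extra assumptions. The reverse inequalities ``$\geq$'' are always supplied by Proposition \ref{homo-dim-regrading-basic}, so I concentrate on ``$\leq$'' and induct on $r_\varphi$. The base case $r_\varphi=0$ forces $\ker\varphi=T(\ker\varphi)$ to be finite, so for (1) every $N\in\smod^GA$ is $\varphi$-finite and Proposition \ref{homo-dim-regrading-basic}(3) yields equality, while for (2) the bounds of Theorem \ref{homo-dim-regrading-general}(2) collapse to equality.

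The key combinatorial ingredient I would establish first is the following lemma: for every rank-$k$ sublattice $L\subseteq\Z^r$ there exist a $\Z$-basis $w_1,\dots,w_k$ of $L$ and functionals $\mu_1,\dots,\mu_k\in\N^r$ with $\mu_i(w_j)=0$ for $j<i$ and $\mu_i(w_i)>0$. I would prove it by constructing a flag via iterated coordinate-hyperplane intersection: choose indices $i_1,\dots,i_k\in\{1,\dots,r\}$ such that $L^{k-j}:=L\cap\ker e^*_{i_1}\cap\cdots\cap\ker e^*_{i_j}$ has rank $k-j$, set $\mu_i:=e^*_{i_{k-i+1}}$ (automatically in $\N^r$ and annihilating $L^{i-1}$), and pick $w_i\in L^i$ with $\mu_i(w_i)$ equal to the positive generator of $\mu_i(L^i)\subseteq\Z$.

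Apply this lemma to $L:=p(\ker\varphi/T(\ker\varphi))\subseteq\Z^r$, which has rank $k=r_\varphi$ since $\ker p\cap\ker\varphi=T(\ker\varphi)$, and lift each $w_i$ to $\varepsilon_i\in\ker\varphi$; then $\varepsilon_1,\dots,\varepsilon_k$ is a $\Z$-basis of the free part of $\ker\varphi$, and each $q_i:=\mu_i\circ p:G\to\Z$ satisfies $q_i(\supp A)\subseteq\N$, $q_i(\varepsilon_j)=0$ for $j<i$, and $q_i(\varepsilon_i)>0$. Factor $\varphi=\varphi'\circ\psi$ with $\ker\psi=\Z\varepsilon_1$ and $r_{\varphi'}=r_\varphi-1$. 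Since $q_1(\supp A)\subseteq\N$ and $q_1(\varepsilon_1)>0$, condition (b) of Proposition \ref{gldim-cut-at-infinity-special} is fulfilled for $R:=\pi_{\varepsilon_1}^*(A[t])$ with $\hbar:=t$; the same bound makes $R\otimes_AN$ both $t$-torsionfree and $t$-discrete (as $q_1(\gamma-n\varepsilon_1)\to-\infty$), so Proposition \ref{idim-cut-at-infinity-special} applies. Via Lemma \ref{group-ring-isomorphism} and the dehomogenization equivalence of Lemma \ref{ext-regrading-surjective} the resulting bounds become $\idim_{\psi^*(A)}^{G'}\psi_A^*(N)\leq\idim_A^GN$ and $\gldim(\Mod^{G'}\psi^*(A))\leq\gldim(\Mod^GA)$.

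The remaining point—and the main obstacle, for which the triangular form of the lemma is designed—is propagating the hypothesis to $(\psi^*(A),\varphi')$ so the induction actually recurses. Setting $p'':=(\mu_2\circ p,\dots,\mu_k\circ p):G\to\Z^{k-1}$, non-negativity on $\supp A$ is inherited, and $\mu_i(w_1)=0$ for $i\geq 2$ gives $p''(\varepsilon_1)=0$, so $p''$ descends to $\bar{p}'':G'\to\Z^{k-1}$ with $\bar{p}''(\supp\psi^*(A))\subseteq\N^{k-1}$. For $\gamma=t+\sum_{i=1}^{k}a_i\varepsilon_i\in\ker\varphi$, the equation $p''(\gamma)=0$ becomes a strictly upper-triangular system with positive diagonal entries $\mu_i(w_i)$, forcing $a_k=\cdots=a_2=0$; hence $\ker p''\cap\ker\varphi=T(\ker\varphi)+\Z\varepsilon_1$, and pushing forward by $\psi$ gives $\ker\bar{p}''\cap\ker\varphi'=\psi(T(\ker\varphi))=T(\ker\varphi')$. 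For part (2), $|T(\ker\varphi')|=|T(\ker\varphi)|$ remains invertible in $\psi^*(A)=A$, so that hypothesis propagates as well. Applying the induction hypothesis (or the base case when $r_{\varphi'}=0$) closes the argument.
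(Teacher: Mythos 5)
Your proposal is correct, but it takes a genuinely different route from the paper's. The paper explicitly abandons the induction of Theorem \ref{homo-dim-regrading-general} here: it first regrades upward along $\varphi_r:\gamma\mapsto(p(\gamma),\varphi(\gamma))$ into $\Z^r\times H$, whose kernel is exactly $T(\ker\varphi)$, and then collapses one $\Z$-factor at a time through the explicit maps $g_i$ with kernels $\Z\varepsilon_i$, $\varepsilon_1=(1,0)$, $\varepsilon_i=(1,-1,0,\cdots,0)$; the positivity and discreteness hypotheses of Propositions \ref{idim-cut-at-infinity-special} and \ref{gldim-cut-at-infinity-special}(b) are then checked with the ready-made functionals $q_i(n_1,\cdots,n_i,\delta)=2n_1+n_2+\cdots+n_i$, and the torsion kernel is handled in a single stroke by Proposition \ref{homo-dim-regrading-basic}(3), resp.\ Theorem \ref{homo-dim-regrading-general}(2). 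You instead keep the induction on $r_\varphi$ inside $G$ itself, and the price is exactly the point you isolate: the hypothesis on $p$ must be propagated to $(\psi^*(A),\varphi')$. Your triangular-basis lemma for the sublattice $p(\ker\varphi)\subseteq\Z^r$ does this correctly: the flag via coordinate hyperplanes, the functionals $\mu_i$, the computation $\ker p''\cap\ker\varphi=T(\ker\varphi)+\Z\varepsilon_1$ and hence $\ker\bar{p}''\cap\ker\varphi'=T(\ker\varphi')$, and the use of $q_1=\mu_1\circ p$ to verify that $\pi_{\varepsilon_1}^*(A[t])\otimes_AN$ is $t$-torsionfree and $t$-discrete (this uses that $N$ is finitely generated, which is why (1) is stated for $\smod^GA$) and that condition (b) of Proposition \ref{gldim-cut-at-infinity-special} holds, all go through. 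The homological core is the same in both arguments (Lemmas \ref{group-ring-isomorphism} and \ref{ext-regrading-surjective}, Propositions \ref{idim-cut-at-infinity-special} and \ref{gldim-cut-at-infinity-special}); the paper's route buys freedom from lattice combinatorics, since upstairs in $\Z^i\times H$ the positive functionals exist trivially, while yours makes the inductive stability of the hypothesis explicit and reuses the factorization pattern of Theorem \ref{homo-dim-regrading-general} verbatim. Two small remarks: your claim that $r_\varphi=0$ \emph{forces} $\ker\varphi=T(\ker\varphi)$ to be finite is not literally automatic (a rank-zero subgroup of a finite-rank group can be infinite torsion), but in part (2) finiteness is built into the hypothesis and in part (1) the paper makes the same tacit assumption when it applies Proposition \ref{homo-dim-regrading-basic}(3) to $\varphi_r$; and for the inductive application of (1) you should record that $\psi^*(A)$ is again left noetherian, which follows from Proposition \ref{noetherian-regrading} (or from Lemma \ref{behavior-polynomial-extension}(1) together with Lemma \ref{ext-regrading-surjective}).
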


\begin{proof}
We need different proof strategy. First factor the map $\varphi_0=\varphi$ into
$$G \xrightarrow{\varphi_r} \Z^r\times H \xrightarrow{g_r} \cdots \xrightarrow{g_2} \Z \times H \xrightarrow{g_1} H,$$
where $\varphi_r: \gamma \mapsto (p(\gamma), \varphi(\gamma))$, $g_1:(n,\gamma) \mapsto \gamma$ and $g_i: (n_1,\cdots, n_i,\gamma) \mapsto (n_1+n_2, n_3,\cdots, n_{i},\gamma)$ for $i\geq2$. Clearly, $\ker\varphi_r=T(\ker\varphi)$, and $\ker g_i = \Z\varepsilon_i$ with $\varepsilon_1=(1,0)$ and $\varepsilon_i= (1,-1,0,\cdots,0)$ for $i\geq 2$. Also, for  $i=1,\cdots, r-1$, let $\varphi_i= g_{i+1} \circ \cdots \circ g_r \circ \varphi_r$.

(1) By Lemma \ref{group-ring-isomorphism}, Lemma \ref{behavior-polynomial-extension} (1) and Proposition \ref{noetherian-regrading}, $\varphi_i^*(A)[\N\varepsilon_i]$ is left $(\Z^i\times H)$-noetherian. It is easy to check that $ \varphi_i^*(A)[\N\varepsilon_i] \otimes_{\varphi_i^*(A)} \varphi_{i,A}^*(N)$  is $e_{\varepsilon_i}$-torsionfree and $e_{\varepsilon_i}$-discrete. So one has
$$
\idim_{\varphi_{i-1}^*(A)}^{\Z^{i-1}\times H} \varphi_{i-1,A}^*(N) = \idim_{\varphi_i^*(A)[\Z\varepsilon_i]}^{\Z^i\times H} \varphi_i^*(A)[\Z\varepsilon_i] \otimes_{\varphi_i^*(A)} \varphi_{i,A}^*(N)  \leq   \idim_{\varphi_i^*(A)}^{\Z^i\times H} \varphi_{i,A}^*(N).
$$
Here, ``$=$'' used Lemma \ref{ext-regrading-surjective} and ``$\leq$'' used Proposition \ref{idim-cut-at-infinity-special} for $(R,\hbar)= (\varphi_i^*(A)[\N\varepsilon_i], e_{\varepsilon_i} )$. Thus,
$$\idim_{\varphi^*(A)}^H \varphi_A^*(N) \leq \idim_{\varphi_r^*(A)}^{\Z^r\times H} \varphi_{r,A}^*(N) = \idim_A^GN \leq \idim_{\varphi^*(A)}^H \varphi_A^*(N),$$
where  ``$=$'' used Proposition \ref{homo-dim-regrading-basic} (3) and the second ``$\leq$'' used Proposition \ref{homo-dim-regrading-basic} (2).

(2) Let $q_i:\Z^i\times H \to \Z$ be the map given by $(n,\gamma) \mapsto n$ for $i=1$ and $(n_1,\cdots n_i, \gamma)\mapsto 2n_1+n_2+\cdots +n_i$ for $i\geq 2$. It is easy to check that $q_i(\supp \varphi_i^*(A)[\N\varepsilon_i]) \subseteq \N$ and $q_i(\varepsilon_i) =1$. So we have
$$
\gldim(\Mod^{\Z^{i-1}\times H}\varphi_{i-1}^*(A)) = \gldim(\Mod^{\Z^i\times H}\varphi_i^*(A)[\Z\varepsilon_i]) \leq \gldim(\Mod^{\Z^i\times H} \varphi_i^*(A)).
$$
Here,  ``$=$'' used Lemma \ref{ext-regrading-surjective} and ``$\leq$'' used Proposition \ref{gldim-cut-at-infinity-special}  for $(R,\hbar)= (\varphi_i^*(A)[\N\varepsilon_i], e_{\varepsilon_i} )$. Thus,
$$
\gldim(\Mod^H\varphi^*(A)) \leq \gldim(\Mod^{\Z^r\times H}\varphi_r^*(A)) = \gldim(\Mod^GA) \leq \gldim(\Mod^H\varphi^*(A)),
$$
where the ``$=$'' is by Theorem \ref{homo-dim-regrading-general} (2) and the second ``$\leq$'' is by  Proposition \ref{homo-dim-regrading-basic} (4).
\end{proof}

We finish this section with an expository  example on  injective and global dimensions.

\begin{example}
Let $A=\mathbb{C}[x,x^{-1}][y]$ be $\Z^2$-graded with $\deg(x)=(1,0)$ and $\deg(y) = (0,1)$. Let $\pi_i:\Z^2\to \Z$, $i=1,2$, be the coordinate projections. It is not hard to see the following equalities:
\begin{itemize}
\item $\idim_{A}^{\Z^2} A= \gldim(\Mod^{\Z^2}A) =1$,
\item $ \idim_{\pi_1^*(A)}^\Z \pi_1^*(A) = \gldim(\Mod^\Z\pi_1^*(A)) =1$,
\item $ \idim_{\pi_2^*(A)}^\Z \pi_2^*(A) = \gldim(\Mod^\Z\pi_2^*(A)) =2$.
\end{itemize}
We leave the proof to readers as flexible applications of the previous results.
\end{example}

\section{On the Cohen-Macaulay property}

Throughout this section all algebras are over a fixed field $\mathbb{K}$. Instead of the Auslander condition, the Cohen-Macaulay property on graded algebras are examined  from our viewpoint in this section.  We refer to \cite{KL,MR} for an exposition of the Gelfand-Kirillov dimension (GK-dimension, for short).

Given a $G$-graded algebra $A$ of finite GK-dimension, we denote by $\smod^G_{cm}A$  the full subcategory of $\smod^GA$  consisting of all objects $M$  such that $\gkdim_AM=\gkdim_A A - \grad_A^GM$.

\begin{definition}\label{Coehn-Macaulay-definition}
We say that a $G$-graded algebra $A$ is  {\em $G$-Cohen-Macaulay} if
it is left and right $G$-noetherian, has finite GK-dimension,  $\smod^G_{cm}A=\smod^GA$ and $\smod^G_{cm}A^o=\smod^GA^o$.
\end{definition}

We say that a $G$-graded algebra $A$ is \emph{well-supported} if there is a group homomorphism $p: G\to \Z$ such that  $p(\supp A) \subseteq \N$ and $\supp A\cap p^{-1}(n)$ is a finite set for every integer $n\in \Z$. Note that a $\Z^r$-graded algebra with support contained in $\N^r$ is obviously well-supported.

Recall that a $G$-graded vector space $V$ is called \emph{locally finite} if each piece $V_\gamma $ is of finite dimension. For a locally finite $\Z$-graded vector space $V$, we define a map  $d_V: \N\to \mathbb{R}$ by $n\mapsto \sum_{|i|\leq n} \dim V_i$.

\begin{lemma}\label{Cohen-Macaulary-extension}
Let $A$ be a well-supported and locally finite $G$-graded algebra. Suppose that $A$ is of finite GK-dimension and $G$-Auslander-Gorenstein. Then for any exact sequence $0\to L\to M\to N\to 0$ in $\smod^GA$, it follows that if $L$ and $N$ are in  $\smod^G_{cm}A$ then $M$ is too.
\end{lemma}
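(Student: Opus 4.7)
The plan is to combine three ingredients: additivity of GK-dimension on the given short exact sequence, the subadditivity of grade from Lemma~\ref{grade-short-exact-sequence}, and the Cohen-Macaulay inequality $\grad_A^GX+\gkdim_AX\leq \gkdim_AA$, valid for every $X\in\smod^GA$ because $A$ is Auslander-Gorenstein. Write $d:=\gkdim_AA$.

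The first step is to establish $\gkdim_AM=\max\{\gkdim_AL,\ \gkdim_AN\}$. The $\geq$ direction is immediate. For the reverse, I would use well-supportedness to pick a group homomorphism $p:G\to\Z$ with $p(\supp A)\subseteq\N$ and $\supp A\cap p^{-1}(n)$ finite for each $n$, so that $p^*(A)$ is a locally finite $\N$-graded algebra. Since the underlying ring structure is unchanged, $\gkdim$ is preserved by the functor $p^*_A$ and likewise a generating set transports along it. For finitely generated $\N$-graded modules over locally finite $\N$-graded algebras, GK-dimension equals the order of polynomial growth of the Hilbert function $n\mapsto\sum_{i\leq n}\dim_\mathbb{K} X_i$, and that growth rate is visibly additive on short exact sequences. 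Combining additivity with the Cohen-Macaulay hypothesis on $L$ and $N$ gives
\[
\gkdim_AM \;=\; d-\min\{\grad_A^GL,\ \grad_A^GN\}.
\]

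Next, Lemma~\ref{grade-short-exact-sequence} yields the lower bound $\grad_A^GM\geq\min\{\grad_A^GL,\grad_A^GN\}$, while the Cohen-Macaulay inequality yields the upper bound $\grad_A^GM\leq d-\gkdim_AM=\min\{\grad_A^GL,\grad_A^GN\}$. Both bounds coincide, forcing $\grad_A^GM=d-\gkdim_AM$, i.e.\ $M\in\smod^G_{cm}A$.

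The main obstacle will be securing the Cohen-Macaulay inequality in this graded framework. It requires $\gkdim_A$ to be an exact and finitely partitive dimension function on $\smod^GA$, which is exactly what the well-supported and locally finite assumptions deliver through the Hilbert-series argument of Step~1, together with a standard Bj\"ork--Ischebeck-type spectral sequence argument valid because $A$ is Auslander-Gorenstein. Once this inequality is in hand, everything else reduces to routine bookkeeping with Lemma~\ref{grade-short-exact-sequence}.
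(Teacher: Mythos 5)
Your Step 1 (the equality $\gkdim_AM=\max\{\gkdim_AL,\gkdim_AN\}$ via the Hilbert function of $p^*_A(-)$, using well-supportedness and local finiteness) is exactly the paper's first step, and the lower bound $\grad_A^GM\geq\min\{\grad_A^GL,\grad_A^GN\}$ from Lemma~\ref{grade-short-exact-sequence} is fine. The gap is your upper bound. You deduce $\grad_A^GM\leq d-\gkdim_AM$ from the asserted ``Cohen--Macaulay inequality'' $\grad_A^GX+\gkdim_AX\leq\gkdim_AA$ for \emph{every} $X\in\smod^GA$, claiming it follows from the Auslander--Gorenstein hypothesis together with exactness and finite partitivity of $\gkdim$. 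No such implication is available: Bj\"ork--Ischebeck-type arguments for Auslander--Gorenstein rings produce properties of the \emph{canonical} dimension $\idim^G_AA-\grad_A^G(-)$ (exactness, partitivity, $\grad_{A^o}^G\uExt_A^{j,G}(X,A)=\grad_A^GX$, etc.), but they give no comparison whatsoever between that dimension and GK-dimension. Such a comparison is precisely the content of the $G$-Cohen--Macaulay property, and your inequality is exactly one half of it, asserted for all finitely generated modules although the lemma only assumes it for $L$ and $N$. If that universal inequality did follow formally from the hypotheses, this lemma and much of Section~6 (which labors to transfer the CM property along quotients by $\hbar$, localizations and regradings) would be pointless; it has to be proved, not invoked, and your sketch (``a standard Bj\"ork--Ischebeck-type spectral sequence argument'') does not supply a proof.

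The repair is the route the paper takes: instead of bounding $\grad_A^GM$ from above by a GK-theoretic inequality, use the fact that for an Auslander--Gorenstein ring the grade is \emph{exact} on short exact sequences, i.e.\ $\grad_A^GM=\min\{\grad_A^GL,\grad_A^GN\}$ (the graded version of \cite[Proposition 1.8]{Bj}; the nontrivial direction is ruling out cancellation in the long $\uExt$-sequence when $\grad_A^GL\leq\grad_A^GN$, which uses the Auslander condition through the fact that $\uExt_A^{q,G}(L,A)$ has grade exactly $q=\grad_A^GL$ while any nonzero submodule of $\uExt_A^{q+1,G}(N,A)$ has grade $\geq q+1$). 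Combining this equality with your Step 1 and the CM hypothesis on $L$ and $N$ gives $\gkdim_AM=d-\grad_A^GM$ directly, with no need for any inequality valid for arbitrary modules.
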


\begin{proof}
Let $p: G\to \Z$  be a group homomorphism such that $\supp A \subseteq p^{-1}(\N)$ and $\supp A \cap p^{-1}(n)$ is finite for all $n\in \Z$. Then, clearly, $p^*(A)$ is locally finite and finitely generated.  So we have
\begin{equation*}
\begin{split}
\gkdim_AM
&=\limsup\nolimits_{n\to\infty} \log_n d_{p_A^*(M)}(n) \\
&= \max \{\ \limsup\nolimits_{n\to\infty} \log_n d_{p_A^*(L)}(n),\, \limsup\nolimits_{n\to\infty} \log_n d_{p_A^*(N)}(n) \ \}\\
&=\max \{\ \gkdim_A L,\ \gkdim_A N \ \}.
\end{split}
\end{equation*}
Here, the first and the third ``='' used  \cite[Lemma 6.1 (b)]{KL}.  Also, we have $$\grad_A^GM= \inf \{\, \grad_A^GL,\, \grad_A^GN\, \}$$  by the graded version of \cite[Proposition 1.8]{Bj}. The result now follows.
\end{proof}

In the sequel, we use the notations and conventions in Section \ref{thesection-3}. In particular, $R$ is a $G$-graded algebra and $\hbar\in R$ is a homogeneous regular normal non-invertible element of degree $\varepsilon$.

\begin{lemma}(\cite[Lemma 5.7]{Lev})\label{gkdim-cut-at-infinity}
Assume $R$ is well-supported, locally finite and finitely generated, and assume $\varepsilon\notin T(G)$. Then for every $\hbar$-torsionfree module $M\in\smod^GR$, it follows that
$$
\gkdim_RM=\gkdim_{\bar{R}}\bar{R}\otimes_RM +1.
$$
\end{lemma}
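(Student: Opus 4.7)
The plan is to reduce the general $G$-graded situation to a $\Z$-graded one via a well-supporting homomorphism, use the short exact sequence coming from left multiplication by $\hbar$ to relate Hilbert functions of $M$ and $\bar{R}\otimes_R M$, and extract the GK-dimensions from the asymptotic growth rates.

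First, I would use the well-supported hypothesis to fix a group homomorphism $p:G\to \Z$ with $p(\supp R)\subseteq \N$ and $\supp R\cap p^{-1}(n)$ finite for every $n\in\Z$, and then verify that $p(\varepsilon)>0$ automatically. Indeed, otherwise the subalgebra $R_{(0)}:=\bigoplus_{p(\gamma)=0} R_\gamma$ would be a finite direct sum of finite-dimensional pieces, hence a finite-dimensional $\mathbb{K}$-algebra; then $\hbar\in R_{(0)}$ would be a non-zero-divisor in a finite-dimensional algebra, therefore a unit in $R_{(0)}\subseteq R$, contradicting the standing assumption that $\hbar$ is non-invertible. Set $s:=p(\varepsilon)>0$. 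Local finiteness and well-supportedness of $R$ together with finite generation of $M$ ensure that $p^*_R(M)$ and $p^*_R(\bar{R}\otimes_R M)$ are locally finite $\Z$-graded vector spaces; after a harmless shift we may arrange their supports to lie in $\N$.

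Next, since $M$ is $\hbar$-torsionfree, left multiplication by $\hbar$ gives the short exact sequence
$$0\to \Sigma_{-\varepsilon}{}^{\tau_\hbar^{-1}}M\xrightarrow{\lambda_\hbar} M\to \bar{R}\otimes_R M\to 0$$
in $\Mod^GR$. Writing $f(n):=\dim p^*_R(M)_n$ and $g(n):=\dim p^*_R(\bar{R}\otimes_R M)_n$, observe that the twist $\tau_\hbar^{-1}$ acts trivially on underlying vector spaces while the shift $\Sigma_{-\varepsilon}$ translates $\Z$-degree by $-s$; counting dimensions piece by piece yields the recurrence
$$f(n)=f(n-s)+g(n)\qquad (n\in\Z),$$
with $f(n)=g(n)=0$ for $n<0$. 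Iterating, $f(n)=\sum_{k\ge 0,\ n-ks\ge 0} g(n-ks)$, and writing $F,G$ for the partial sums of $f,g$ one gets $F(n)-F(n-s)=G(n)$, whence $F(n)=\sum_{k=0}^{\lfloor n/s\rfloor}G(n-ks)$. This immediately sandwiches $F(n)$ between $\lfloor n/(2s)\rfloor\cdot G(\lfloor n/2\rfloor)$ and $(n/s+1)\cdot G(n)$.

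Finally, by \cite[Lemma 6.1(b)]{KL} both GK-dimensions are computed as $\limsup_{n\to\infty}\log_n$ of the partial Hilbert sums, and the sandwich above forces this limsup to increase by exactly one when passing from $g$ to $f$, yielding $\gkdim_R M=\gkdim_{\bar{R}}(\bar{R}\otimes_R M)+1$. The main step to justify carefully is $p(\varepsilon)>0$, where regularity and non-invertibility of $\hbar$ together with local finiteness of $R$ all enter crucially; the hypothesis $\varepsilon\notin T(G)$ is consistent with (indeed implied by) this fact. The remaining growth-rate comparison is elementary but requires attention to the fact that the relevant $\limsup$ is insensitive to the multiplicative constants and to the dilation $n\rightsquigarrow n/2$ appearing in the lower bound.
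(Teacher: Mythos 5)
Your argument is correct, and its skeleton coincides with the paper's: fix the well-supporting homomorphism $p$, pass to the $\Z$-grading $p^*$, use the exact sequence $0\to \Sigma_{-\varepsilon}{}^{\tau_\hbar^{-1}}M\to M\to \bar{R}\otimes_RM\to 0$ to get the identity $d_{p^*_R(M)}(n)=\sum_{i=0}^{[n/s]}d_{p^*_{\bar R}(\bar R\otimes_RM)}(n-is)$, and read off GK-dimensions via \cite[Lemma 6.1(b)]{KL}. The one genuine difference is how the two inequalities are obtained: the paper only extracts the upper bound $\gkdim_RM\le \gkdim_{\bar R}\bar R\otimes_RM+1$ from this identity and then quotes \cite[Proposition 8.3.5]{MR} for the reverse inequality, whereas you get both directions from the elementary sandwich $\lfloor n/(2s)\rfloor\, G(\lfloor n/2\rfloor)\le F(n)\le (n/s+1)\,G(n)$, which (after the routine $\limsup$ bookkeeping, including the harmless dilation $n\rightsquigarrow n/2$ and the trivial case $\bar R\otimes_RM=0$, which forces $M=0$ since $p(\supp M)$ is bounded below) is a perfectly valid, self-contained replacement for that citation. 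You also spell out why $s=p(\varepsilon)>0$, using regularity and non-invertibility of $\hbar$ together with finite-dimensionality of the $p$-degree-zero subalgebra; the paper simply asserts this, so your justification is a welcome addition, and your observation that it makes the hypothesis $\varepsilon\notin T(G)$ automatic is accurate. In short: same strategy, slightly more elementary and more self-contained execution; the paper's route is shorter at the cost of an external reference.
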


\begin{proof}
Let $p: G\to \Z$  be a group homomorphism such that $\supp A \subseteq p^{-1}(\N)$ and $\supp A \cap p^{-1}(n)$ is finite for all $n\in \Z$. Then $p^*(A)$ is positively graded and locally finite, and $a:=p(\varepsilon) >0$. Replace $M$ by $\Sigma_{-r\varepsilon}M$ for some $r\gg0$, we may assume $\supp p_R^*(M)\subseteq \N$.  Then
\begin{eqnarray*}
\begin{split}
\gkdim_RM
& = \limsup\nolimits_{n\to\infty} \log_n d_{p^*_R(M)} (n)\\
& = \limsup\nolimits_{n\to \infty} \log_n \sum\nolimits_{i=0}^{[\frac{n}{a}]} d_{p^*_{\bar{R}}(\bar{R}\otimes_R M)} (n-ia) \\
& \leq \limsup\nolimits_{n\to \infty} \log_n d_{p^*_{\bar{R}}(\bar{R}\otimes_R M)} (n) + 1 \\
& = \gkdim_{\bar{R}} \bar{R}\otimes_RM +1.
\end{split}
\end{eqnarray*}
Here, $[\frac{n}{a}]$ denotes the integral part of $n/a$, and the first and the final ``='' used \cite[Lemma 6.1 (b)]{KL}. The desired equality now follows by \cite[Proposition 8.3.5]{MR}.
\end{proof}

\begin{theorem}\label{Cohen-Macaulary-cut-at-infinity-quotient}
Assume $R$ is well-supported and locally finite, and assume $\varepsilon\notin T(G)$. Then $R$ is $G$-Auslander-Gorenstein and $G$-Cohen-Macaulay if and only if $\bar{R}$ is too.
\end{theorem}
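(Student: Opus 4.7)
My plan is to split the equivalence into the Auslander-Gorenstein half (supplied directly by Theorem \ref{Auslander-cut-at-infinity-special}(2) once I verify that $R$ is $\hbar$-discrete) and the Cohen-Macaulay half (which I reduce to computations over $\bar R$ via Rees' Lemma \ref{Rees-lemma-regular} for the grade, Lemma \ref{gkdim-cut-at-infinity} for the GK-dimension, and Lemma \ref{Cohen-Macaulary-extension} for extension-closure). To set up $\hbar$-discreteness, I fix a witness $p:G\to\Z$ for the well-supportedness of $R$; since $\varepsilon\in\supp R$ one has $p(\varepsilon)\geq 0$, and since $\varepsilon\notin T(G)$ the family $\{\gamma-n\varepsilon\}_{n\geq 0}$ is infinite for each $\gamma\in G$. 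If $p(\varepsilon)>0$ then $p(\gamma-n\varepsilon)<0$ eventually; if $p(\varepsilon)=0$ then the infinite family sits inside the single finite fiber $\supp R\cap p^{-1}(p(\gamma))$. In both cases $R_{\gamma-n\varepsilon}=0$ for $n\gg 0$, whence $(\hbar^n R)_\gamma=\hbar^n R_{\gamma-n\varepsilon}=0$, and $R$ is $\hbar$-discrete; in particular $\hbar\in J^G(R)$. Applying Lemma \ref{gkdim-cut-at-infinity} to the $\hbar$-torsionfree module $R$ also supplies the bookkeeping identity $\gkdim_R R=\gkdim_{\bar R}\bar R+1$.

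For the forward direction, assume $R$ is $G$-Auslander-Gorenstein and $G$-Cohen-Macaulay. Then $\bar R$ is $G$-Auslander-Gorenstein by Theorem \ref{Auslander-cut-at-infinity-special}(2); it inherits well-supportedness, local finiteness and two-sided $G$-noetherianity from $R$, and the bookkeeping identity forces its GK-dimension to be finite. For $L\in\smod^G\bar R$ regarded as an $R$-module, $\gkdim_R L=\gkdim_{\bar R}L$ (scalar restriction along the surjection $R\to\bar R$ preserves GK-dimension) while $\grad_R^G L=\grad_{\bar R}^G L+1$ by Lemma \ref{Rees-lemma-regular}; the Cohen-Macaulay identity on $R$ then rearranges to $\gkdim_{\bar R}L+\grad_{\bar R}^G L=\gkdim_R R-1=\gkdim_{\bar R}\bar R$, and the symmetric computation works over $\bar R^o$.

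For the converse, assume $\bar R$ is $G$-Auslander-Gorenstein and $G$-Cohen-Macaulay. Theorem \ref{Auslander-cut-at-infinity-special}(2) upgrades this to $R$ being $G$-Auslander-Gorenstein, so Lemma \ref{Cohen-Macaulary-extension} becomes available for $R$. Fix $M\in\smod^G R$ and consider $0\to T_\hbar(M)\to M\to F_\hbar(M)\to 0$. For the torsionfree part, Lemma \ref{gkdim-cut-at-infinity} yields $\gkdim_R F_\hbar(M)=\gkdim_{\bar R}\bar R\otimes_R F_\hbar(M)+1$, while Proposition \ref{grade-cut-at-infinity}(1) (using $\hbar\in J^G(R)$) and (2) (using $\bar R$ Auslander-Gorenstein) combine to give $\grad_R^G F_\hbar(M)=\grad_{\bar R}^G \bar R\otimes_R F_\hbar(M)$; Cohen-Macaulayness of $\bar R$ then places $F_\hbar(M)$ in $\smod^G_{cm}R$. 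For the torsion part, $\hbar^n T_\hbar(M)=0$ for some $n\geq 1$ by $G$-noetherianity, and the filtration $T_\hbar(M)\supseteq\hbar T_\hbar(M)\supseteq\cdots\supseteq\hbar^n T_\hbar(M)=0$ has subquotients that are $\bar R$-modules, each of which lies in $\smod^G_{cm}R$ by the same identity used in the forward step; iterating Lemma \ref{Cohen-Macaulary-extension} gives $T_\hbar(M)\in\smod^G_{cm}R$, and one final application places $M$ there. The symmetric argument works over $R^o$. The main obstacle is the dependency ordering: Lemma \ref{Cohen-Macaulary-extension} requires $R$ itself to already be $G$-Auslander-Gorenstein, so the Auslander-Gorenstein upgrade must be secured before the torsion/torsionfree splitting of $M$ can be assembled.
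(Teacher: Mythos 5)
Your proposal is correct and follows essentially the same route as the paper: the Auslander--Gorenstein half is delegated to Theorem \ref{Auslander-cut-at-infinity-special}(2), the forward Cohen--Macaulay implication is done via Lemma \ref{Rees-lemma-regular} and Lemma \ref{gkdim-cut-at-infinity}, and the converse via the torsion/torsionfree splitting, Proposition \ref{grade-cut-at-infinity} and the extension-closure Lemma \ref{Cohen-Macaulary-extension}. You even spell out the $\hbar$-discreteness of $R$ (hence $\hbar\in J^G(R)$) that the paper leaves implicit; the only hypothesis you pass over silently is the finite generation of $R$ required by Lemma \ref{gkdim-cut-at-infinity}, which the paper likewise only notes in passing.
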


\begin{proof}
By Theorem \ref{Auslander-cut-at-infinity-special} (2), one may assume in priori that $R$ and $\bar{R}$ are both $G$-Auslander-Gorenstein. Note in particular that in this case $R$ is finitely generated.

Now assume $R$ is $G$-Cohen-Macaulay. Then for every module $L\in\smod^G\bar{R}$ we have
\begin{eqnarray*}
\gkdim_{\bar{R}}L = \gkdim_RL = \gkdim_RR-\grad_R^GL
 = \gkdim_{\bar{R}} \bar{R} - \grad_{\bar{R}}^GL.
\end{eqnarray*}
Here, the final ``='' is by Lemma \ref{gkdim-cut-at-infinity} and Lemma \ref{Rees-lemma-regular}. Thus $\bar{R}$ is $G$-Cohen-Macaulay.

Next, we assume instead $\bar{R}$ is  $G$-Cohen-Macaulay and proceed to show $R$ is so. For every $\hbar$-torsionfree module $N\in \smod^GR$, we have
\begin{eqnarray*}
\gkdim_RN =\gkdim_{\bar{R}} \bar{R}\otimes_RN + 1 = \gkdim_{\bar{R}} \bar{R} - \grad_{\bar{R}}^G\bar{R}\otimes_RN +1 = \gkdim_RR-\grad_R^GN.
\end{eqnarray*}
Here, the first ``='' used Lemma \ref{gkdim-cut-at-infinity} and the third ``='' used  Lemma \ref{gkdim-cut-at-infinity} and Proposition \ref{grade-cut-at-infinity}. Also, for every module $L\in\smod^GR$ with $\hbar \cdot L=0$, we have
\begin{eqnarray*}
\gkdim_RL =\gkdim_{\bar{R}}L = \gkdim_{\bar{R}} \bar{R} - \grad_{\bar{R}}^GL = \gkdim_RR-\grad_R^GL.
\end{eqnarray*}
Here, the final ``='' used Lemma \ref{gkdim-cut-at-infinity} and Lemma \ref{Rees-lemma-regular}. Since such $N$ and $L$ generate $\smod^GR$ by extension, it follows that $R$ is indeed $G$-Cohen-Macaulay by Lemma \ref{Cohen-Macaulary-extension}.
\end{proof}

\begin{theorem}\label{Cohen-Macaulary-cut-at-infinity-localization}
Assume $\{\, \tau_\hbar^n(x)\, \}$ spans a finite dimensional subspace for every homogeneous element $x\in R$. Assume further $\bar{R}$ is $G$-Auslander-Gorenstein. Then, $\hat{R}$ is $G$-Cohen-Macaulay when $R$ is so.
\end{theorem}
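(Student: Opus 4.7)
The plan is to show that grade and Gelfand–Kirillov dimension are both preserved under the localization $R\to \hat{R}$, so that the Cohen–Macaulay identity for $R$ transfers directly to $\hat{R}$. First I would observe that $R$ being $G$-Cohen–Macaulay forces $R$ to be left and right $G$-noetherian of finite GK-dimension, whence $\hat{R}$ is left and right $G$-noetherian by standard Ore localization theory. Moreover, every $U\in\smod^G\hat{R}$ can be realized as $\hat{R}\otimes_RM$ for some $\hbar$-torsionfree $M\in\smod^GR$: take $M$ to be the $R$-submodule of $U$ generated by a finite $\hat{R}$-generating set, which is automatically $\hbar$-torsionfree as a submodule of the $\hbar$-torsionfree module $U$. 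Under this identification, Proposition \ref{grade-cut-at-infinity}(2) applies (the hypothesis that $\bar{R}$ is $G$-Auslander–Gorenstein is in force) and gives $\grad_{\hat{R}}^GU=\grad_R^GM$.

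The crux of the argument will be to prove that $\gkdim_{\hat{R}}U=\gkdim_RM$, which is where the hypothesis on $\tau_\hbar$ enters. Since each element of $R$ has finite homogeneous support, any finite-dimensional $V\subseteq R$ embeds into a finite-dimensional subspace spanned by homogeneous elements; the hypothesis then shows $V$ further embeds into a finite-dimensional $\tau_\hbar$-stable subspace (obtained by summing the spans of $\{\tau_\hbar^n(x)\}_n$ over a homogeneous basis). Such a subspace is automatically $\tau_\hbar^{-1}$-stable since $\tau_\hbar$ restricts to an injective endomorphism of a finite-dimensional space, hence an automorphism. For the upper bound, any finite-dimensional $V'\subseteq\hat{R}$ sits inside $V\hbar^{-k}$ for some finite-dimensional $\tau_\hbar$-stable $V\subseteq R$ and some $k\geq 0$, and any finite-dimensional $M_0\subseteq U$ sits inside $\hbar^{-m}N_0$ for some finite-dimensional $N_0\subseteq M$. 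Applying $v\hbar^{-j}=\hbar^{-j}\tau_\hbar^j(v)$ repeatedly together with $\tau_\hbar$-stability of $V$ yields the clean containment
$${V'}^n M_0\ \subseteq\ V^n\hbar^{-nk}\hbar^{-m}N_0\ =\ \hbar^{-(nk+m)}V^n N_0,$$
and since $\lambda_{\hbar^{-(nk+m)}}$ is a vector-space automorphism of $U$, this gives $\dim {V'}^n M_0\leq \dim V^n N_0$, hence $\gkdim_{\hat{R}}U\leq \gkdim_RM$. The reverse inequality follows immediately from $V^n N_0\subseteq (V+\mathbb{K}\hbar^{-1})^n N_0$ together with the injection $M\hookrightarrow U$ (which uses $\hbar$-torsionfreeness of $M$).

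Combining these two comparisons, for any $U\in\smod^G\hat{R}$ realized as $\hat{R}\otimes_RM$,
$$\gkdim_{\hat{R}}U\ =\ \gkdim_RM\ =\ \gkdim_RR-\grad_R^GM\ =\ \gkdim_{\hat{R}}\hat{R}-\grad_{\hat{R}}^GU,$$
where the middle equality uses the Cohen–Macaulay property of $R$ and the last uses $\gkdim_{\hat{R}}\hat{R}=\gkdim_RR$, which is the same GK-comparison applied to $M=R$. A symmetric argument applied to $R^o$ and $\bar{R}^o$ handles the right-module side. I expect the main obstacle to be the GK-dimension comparison: the naive estimate ${V'}^nM_0\subseteq\sum_{k=0}^n\hbar^{-k}V^nM_0$ only yields $\gkdim_{\hat{R}}U\leq\gkdim_RM+1$, so the essential trick is the cleaner containment into the single shifted copy $\hbar^{-(nk+m)}V^n N_0$, which crucially exploits $\tau_\hbar$-stability and hence the hypothesis on $\tau_\hbar$.
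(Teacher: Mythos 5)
Your proposal is correct and follows essentially the same route as the paper: realize each finitely generated graded $\hat{R}$-module as $\hat{R}\otimes_RM$ with $M\in\smod^GR$ $\hbar$-torsionfree, obtain $\grad_{\hat{R}}^G\hat{R}\otimes_RM=\grad_R^GM$ from Proposition \ref{grade-cut-at-infinity} (2), and combine this with the invariance of GK-dimension under inverting $\hbar$ (applied also to $M=R$ to get $\gkdim_{\hat{R}}\hat{R}=\gkdim_RR$). The only divergence is that where the paper observes that the powers $\hbar^i$ are local normal and cites \cite[Lemma 2.3]{ASZ2} for $\gkdim_{\hat{R}}\hat{R}\otimes_RM=\gkdim_RM$, you prove this invariance directly via finite-dimensional $\tau_\hbar$-stable subspaces and the containment ${V'}^nM_0\subseteq\hbar^{-(nk+m)}V^nN_0$, which is a sound self-contained substitute for that citation.
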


The first assumption in this theorem is fulfilled if $\hbar$ is central or $R$ is locally finite.

\begin{proof}
Assume $R$ is $G$-Cohen-Macaulay. It is easy to check that all $\hbar^i$ are local normal in the sense of \cite{ASZ2}. Then for every $\hbar$-torsionfree module $M\in \smod^GR$, we have
$$
\gkdim_{\hat{R}} \hat{R}\otimes_RM = \gkdim_{R}M = \gkdim_RR- \grad_R^GM = \gkdim_{\hat{R}}\hat{R} - \grad_{\hat{R}} \hat{R}\otimes_RM.
$$
Here, the first ``='' is by  \cite[Lemma 2.3]{ASZ2} and the final ``='' comes from \cite[Lemma 2.3]{ASZ2} and Proposition \ref{grade-cut-at-infinity} (2). Thus, $\hat{R}$ is $G$-Cohen-Macaulay.
\end{proof}

\begin{lemma}\label{gkdim-tensor}
Let $A$ and $B$ be algebras. Let $M\in \Mod A$ and $N\in \Mod B$. Assume there is a finite dimensional subspace $U$ of $A$ that containing $1_A$ and a finite dimensional subspace $X$ of $M$ such that $\gkdim_AM= \lim_{n\to \infty} \log_n\dim (U^n X)$. Then
\begin{flalign*}
&& \gkdim_{A\otimes B} M\otimes N = &\gkdim_AM +\gkdim_BN. &
\end{flalign*}
\end{lemma}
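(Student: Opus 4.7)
The plan is to prove the equality by establishing the two inequalities separately. The upper bound $\gkdim_{A\otimes B}(M\otimes N) \leq \gkdim_A M + \gkdim_B N$ is a general fact that holds without any limit hypothesis; the reverse inequality is where the assumption on $(U,X)$ is used in an essential way.

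For the upper bound, I would argue as follows. Given any finite-dimensional subspace $W \subseteq A\otimes B$ containing $1_{A\otimes B}$ and any finite-dimensional $Z \subseteq M\otimes N$, one may enlarge $W$ to $U'\otimes V'$ and $Z$ to $X'\otimes Y'$ with $U'\subseteq A$, $V'\subseteq B$, $X'\subseteq M$, $Y'\subseteq N$ finite-dimensional and $1_A \in U'$, $1_B \in V'$. From the product rule $(u_1\otimes v_1)(u_2\otimes v_2)= u_1u_2\otimes v_1v_2$ one checks directly that $(U'\otimes V')^n = (U')^n \otimes (V')^n$ and $(U'\otimes V')^n(X'\otimes Y')=(U')^nX'\otimes (V')^nY'$, whence $\dim(W^nZ)\leq \dim((U')^nX')\cdot\dim((V')^nY')$. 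Passing to $\log_n$, taking $\limsup$, and using the subadditivity $\limsup(f_n+g_n)\leq\limsup f_n+\limsup g_n$ followed by the supremum over $U',V',X',Y'$, one recovers the upper bound.

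For the lower bound I would use the specific $U$ and $X$ furnished by the hypothesis. Fix arbitrary finite-dimensional $V\subseteq B$ with $1_B\in V$ and $Y\subseteq N$, and put $W := U\otimes V$ and $Z := X\otimes Y$; the same multiplication computation yields $W^nZ= U^nX\otimes V^nY$, hence $\dim(W^nZ)=\dim(U^nX)\cdot\dim(V^nY)$. The key numerical fact is that if $\lim_n a_n$ exists and is finite, then $\limsup_n(a_n+b_n)=\lim_n a_n+\limsup_n b_n$; applied with $a_n=\log_n\dim(U^nX)$ (where the limit exists and equals $\gkdim_A M$ by hypothesis) and $b_n=\log_n\dim(V^nY)$, this gives
$$\limsup_{n\to\infty}\log_n\dim(W^nZ) \;=\; \gkdim_A M \;+\; \limsup_{n\to\infty}\log_n\dim(V^nY).$$
Taking the supremum over all admissible $V$ and $Y$ yields $\gkdim_{A\otimes B}(M\otimes N)\geq \gkdim_A M+\gkdim_B N$.

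The main obstacle, and the reason for the hypothesis, is precisely the non-additivity of $\limsup$. In general, $\gkdim$ is defined as a $\limsup$ and the tensor product formula can fail strictly; the assumption that $\gkdim_A M$ is realized as a genuine limit on some pairing $(U,X)$ is what upgrades the generic inequality $\limsup(f_n+g_n)\leq \limsup f_n+\limsup g_n$ to the equality needed to identify the sum of the GK-dimensions. No further obstacle is expected; the remaining verifications (that $(U\otimes V)^n=U^n\otimes V^n$, and the enlargement of arbitrary finite-dimensional subspaces of a tensor product to a tensor product of finite-dimensional subspaces) are routine.
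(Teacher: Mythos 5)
Your argument is correct and is essentially the paper's own proof: the paper simply cites \cite[Proposition 3.11]{KL} and calls the module version an ``obvious modification,'' and your two-inequality argument --- the identity $(U\otimes V)^n(X\otimes Y)=U^nX\otimes V^nY$ for the bounds, plus the fact that $\limsup$ becomes additive when one summand is a genuine limit --- is exactly that standard Krause--Lenagan argument transplanted to modules.
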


\begin{proof}
It follows from an obvious modification of  the discussion for \cite[Proposition 3.11]{KL}.
\end{proof}

\begin{theorem}\label{Cohen-Macaulay-regrading}
Let $A$ be a $G$-graded algebra and $\varphi:G\to H$ a group homomorphism.
Suppose that $A$ is locally finite and there is an injective group homomorphism $p:G\to \Z^r$, $r>0$, such that $p(\supp A)\subseteq \N^r$. Then $A$ is $G$-Auslander-Gorenstein and $G$-Cohen-Macaulay if and only if $\varphi^*(A)$ is $H$-Auslander-Gorenstein and $H$-Cohen-Macaulay.
\end{theorem}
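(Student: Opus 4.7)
The plan is to parallel the proof of Theorem \ref{Auslander-regrading}. The direction from $\varphi^*(A)$ to $A$ is quick: for any $M \in \smod^GA$, the module $\varphi_A^*(M) \in \smod^H\varphi^*(A)$ shares its underlying $A$-action with $M$, so $\gkdim_AM = \gkdim_{\varphi^*(A)}\varphi_A^*(M)$; Lemma \ref{ext-regrading-basic} supplies $\grad_A^GM = \grad_{\varphi^*(A)}^H\varphi_A^*(M)$ (with Proposition \ref{noetherian-regrading} providing $G$-noetherianness); and Theorem \ref{Auslander-regrading} transfers Auslander-Gorensteinness back to $A$. These three identities turn the Cohen-Macaulay equation for $\varphi_A^*(M)$ over $\varphi^*(A)$ into the corresponding one for $M$ over $A$, and the same argument works for right modules.

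For the reverse direction, I would induct on $r_\varphi$, as in the proofs of Theorems \ref{Auslander-regrading} and \ref{homo-dim-regrading-general}. Theorem \ref{Auslander-regrading} supplies the Auslander-Gorenstein conclusion, leaving the Cohen-Macaulay condition to establish. The base case $r_\varphi = 0$ splits into an injective subcase (handled by the Ext isomorphism of Lemma \ref{ext-regrading-injective} together with the intrinsic nature of $\gkdim$) and a finite-torsion-kernel surjective subcase with kernel $\Omega$ (handled via the dehomogenization equivalence of Lemma \ref{ext-regrading-surjective}, the identification $\uHom_A^G(A[\Omega], A) \cong A[\Omega]$ used in the proof of Theorem \ref{Auslander-regrading}, and the equality $\gkdim A[\Omega] = \gkdim A$ which is automatic since $A[\Omega]$ is a finite free $A$-module).

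For the induction step, I factor $\varphi = \varphi' \circ \psi$ with $\psi: G \to G'$ surjective, $\ker\psi = \Z\varepsilon$ for a suitably chosen $\varepsilon \in \ker\varphi$, and $r_{\varphi'} = r_\varphi - 1$, and reduce to proving that $\psi^*(A)$ is $G'$-Auslander-Gorenstein and $G'$-Cohen-Macaulay; the induction hypothesis applied to $\varphi'$ then finishes the argument. In the spirit of Lemma \ref{behavior-polynomial-extension}, the key intermediate step is that $A$ being $G$-Auslander-Gorenstein and $G$-Cohen-Macaulay forces $\pi_\varepsilon^*(A[t,t^{-1}])$ to share these properties; Lemma \ref{group-ring-isomorphism} identifies $\pi_\varepsilon^*(A[t,t^{-1}])$ with $A[\Z\varepsilon]$, and the dehomogenization equivalence of Lemma \ref{ext-regrading-surjective} then transfers the property to $\psi^*(A)$. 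The intermediate step is obtained by chaining
$$
A \ \rightsquigarrow \ \iota^*(A) \ \rightsquigarrow \ A[t] \ \rightsquigarrow \ A[t][\N\eta] \ \rightsquigarrow \ A[t][\Z\eta] \ \rightsquigarrow \ \pi_\varepsilon^*(A[t]) \ \rightsquigarrow \ \pi_\varepsilon^*(A[t,t^{-1}])
$$
with $\eta = (1, -\varepsilon) \in \bar G$, applying Lemma \ref{ext-regrading-injective}, Theorem \ref{Cohen-Macaulary-cut-at-infinity-quotient} twice (with $(R,\hbar) = (A[t], t)$ and then $(A[t][\N\eta], e_\eta)$), Theorem \ref{Cohen-Macaulary-cut-at-infinity-localization} (whose spanning hypothesis is trivial since $e_\eta$ is central), the dehomogenization equivalence of Lemma \ref{ext-regrading-surjective}, and finally Theorem \ref{Cohen-Macaulary-cut-at-infinity-localization} with $(R,\hbar) = (\pi_\varepsilon^*(A[t]), t)$ (its hypotheses being satisfied as $\pi_\varepsilon^*(A[t])/(t) = \pi_\varepsilon^*(\iota^*(A)) = A$ is $G$-Auslander-Gorenstein).

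The main obstacle I anticipate is bookkeeping the local-finiteness and well-supportedness hypotheses of Theorems \ref{Cohen-Macaulary-cut-at-infinity-quotient} and \ref{Cohen-Macaulary-cut-at-infinity-localization} at each link of the chain, as the grading group alternates between $G$ and $\bar G = \Z \times G$; these are controlled by lifting the injection $p: G \hookrightarrow \Z^r$ to $\bar p: \bar G \hookrightarrow \Z^{r+1}$ prepending the $t$-degree, noting that $T(\bar G) = 0$ since $G$ is torsion-free. A secondary point is that the dehomogenization $\Xi_{A[t]}^{\pi_\varepsilon}$ changes the underlying algebra, shifting $\gkdim$ by $\text{rank}(\ker\pi_\varepsilon) = 1$ uniformly for the ring and its finitely generated modules (computable via Lemma \ref{gkdim-tensor} applied to the algebra decomposition $A[t][\Z\eta] \cong A[t] \otimes_{\mathbb{K}} \mathbb{K}[\Z\eta]$); since grades are preserved by the Ext isomorphism in Lemma \ref{ext-regrading-surjective} and $\gkdim$ of the ring shifts by the same amount, the Cohen-Macaulay identity passes through intact.
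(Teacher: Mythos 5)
Your easy direction is fine, and the overall shape (reduce to the Cohen--Macaulay transfer, handle an injective regrading, then kill the free part of the kernel one $\Z$ at a time through the chain $A[t]\rightsquigarrow A[t][\N\eta]\rightsquigarrow A[t][\Z\eta]$) is the right family of tools. But the induction on $r_\varphi$ has a genuine gap: the standing hypotheses of the theorem are not inherited by $\psi^*(A)$ when $\varepsilon\in\ker\varphi\setminus T(\ker\varphi)$ is chosen arbitrarily. The components of $\psi^*(A)$ are $\bigoplus_{\gamma\in\psi^{-1}(\delta)}A_\gamma$, sums over cosets of $\Z\varepsilon$, and these are typically infinite dimensional; likewise $\supp\psi^*(A)$ need not admit any injective positive embedding into some $\Z^{r'}$. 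For instance, take $A=\mathbb{K}[x,y]$ with $G=\Z^2$, $\deg x=(1,0)$, $\deg y=(0,1)$, $H=0$, $p=\id$, and $\varepsilon=(1,0)$: then $\psi^*(A)$ is $\Z$-graded with $\psi^*(A)_0=\mathbb{K}[x]$ infinite dimensional. Consequently you can neither invoke the induction hypothesis for $\varphi':G'\to H$ (its hypotheses fail for $\psi^*(A)$), nor continue the chain at the next step, since Theorems \ref{Cohen-Macaulary-cut-at-infinity-quotient} and \ref{Cohen-Macaulary-cut-at-infinity-localization} require the ambient ring to be locally finite and well-supported. A smaller symptom of the same problem already occurs in your first step: controlling well-supportedness by ``prepending the $t$-degree'' to $p$ does not work for $A[t][\N\eta]$, because $\bar p(\eta)=(1,-p(\varepsilon))$ need not lie in $\N^{r+1}$; one must weight the new coordinate heavily (a functional of the shape $q(n,\gamma)=Cn+\sigma(p(\gamma))$ with $C$ large), which is exactly what the paper's $q_i=2n_1+n_2+\cdots+n_i$ accomplishes.

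This is precisely why the paper does not induct on $r_\varphi$ here. It first regrades along the \emph{injective} map $\varphi_r=(p,\varphi):G\to\Z^r\times H$ (Lemma \ref{ext-regrading-injective} together with \cite[Proposition 5.1 (a)]{KL}); injective regradings never merge homogeneous components, so local finiteness survives and the support now lies in $\N^r\times H$. It then peels off the explicit kernel generators $\varepsilon_i=(1,-1,0,\ldots,0)$ of the projections $g_i$, whose fibers over the support are finite because the first coordinates are nonnegative; this keeps every intermediate ring $\varphi_i^*(A)[\N\varepsilon_i]$ locally finite and well-supported, so Theorems \ref{Cohen-Macaulary-cut-at-infinity-quotient} and \ref{Cohen-Macaulary-cut-at-infinity-localization} apply at each stage. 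One further point: the GK-dimension shift by $1$ under the dehomogenization equivalence is not a formal consequence of Lemma \ref{gkdim-tensor} applied to the ring decomposition alone; for each finitely generated graded module $M$ over $\varphi_i^*(A)[\Z\varepsilon_i]$ you need the explicit ungraded isomorphism $M\cong\Xi(M)[t,t^{-1}]$, constructed from a splitting of the surjection (the map $M_\gamma\ni m\mapsto\bar m\cdot t^{q_i(\gamma)}$ in the last paragraph of the paper's proof), before Lemma \ref{gkdim-tensor} gives $\gkdim M=\gkdim\Xi(M)+1$. If you repair your induction by first passing to $\Z^r\times H$ via $p$ and choosing the kernel generators as above, your argument essentially becomes the paper's.
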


\begin{proof}
Lemma \ref{ext-regrading-basic} tells us readily that ``$\varphi^*(A)$ is $H$-Cohen-Macalay $\Rightarrow$ $A$ is $G$-Cohen-Macaulay''. Then, by Theorem \ref{Auslander-regrading} (2), it remains to show $\varphi^*(A)$ is $H$-Cohen-Macalay under the assumption that $A$ is $G$-Auslander-Gorenstein and $G$-Cohen-Macaulay. To this end, first factor the map $\varphi_0=\varphi$ into
$$G \xrightarrow{\varphi_r} \Z^r\times H \xrightarrow{g_r} \cdots \xrightarrow{g_2} \Z \times H \xrightarrow{g_1} H,$$
where $\varphi_r: \gamma \mapsto (p(\gamma), \varphi(\gamma))$, $g_1:(n,\gamma) \mapsto \gamma$ and $g_i: (n_1,\cdots, n_i,\gamma) \mapsto (n_1+n_2, n_3,\cdots, n_{i},\gamma)$ for $i\geq2$. Clearly, $\varphi_r$ is injective, and $\ker g_i = \Z\varepsilon_i$ with $\varepsilon_1=(1,0)$ and $\varepsilon_i= (1,-1,0,\cdots,0)$ for $i\geq 2$. For  $i=1,\cdots, r-1$, let $\varphi_i= g_{i+1} \circ \cdots \circ g_r \circ \varphi_r$. Also, let $q_1:\Z\times H\to \Z$ be the map given by  $(n,\delta) \mapsto n$ and for $i\geq 2$, let $q_i:\Z^i\times H \to \Z$ be given by $(n_1,\cdots n_i, \delta)\mapsto 2n_1+n_2+\cdots +n_i$.

By Lemma \ref{ext-regrading-injective} and \cite[Proposition 5.1 (a)]{KL}, $\varphi_r^*(A)$ is $(\Z^r\times H)$-Cohen-Macaulay. So it suffices  to see
\begin{eqnarray*}
\begin{split}
\varphi_i^*(A) \text{ is $(\Z^i\times H)$-Cohen-Macaulay }
& \Longrightarrow \varphi_i^*(A)[\Z\varepsilon_i] \text{ is $(\Z^i\times H)$-Cohen-Macaulay } \\
& \Longrightarrow \varphi_{i-1}^*(A) \text{ is $(\Z^{i-1}\times H)$-Cohen-Macaulay }
\end{split}
\end{eqnarray*}
for $i=1,\cdots,r$. Note that $\varphi_i^*(A)[\N\varepsilon_i]$ is well-supported ($q_i$ fulfills the requirements) and locally finite and $\varphi_i^*(A)$ is $(\Z^i\times H)$-Auslander-Gorenstein by Theorem \ref{Auslander-regrading} (2). Then the first ``$\Longrightarrow$'' follows by applying Theorem \ref{Cohen-Macaulary-cut-at-infinity-quotient} and Theorem \ref{Cohen-Macaulary-cut-at-infinity-localization} to the situation $(R,\hbar) = (\varphi_i^*(A)[\N\varepsilon_i], e_{\varepsilon_i})$.

Now we are going to show the second ``$\Longrightarrow$''. Suppose $\varphi_i^*(A)[\Z\varepsilon_i]$ is $(\Z^i\times H)$-Cohen-Macaulay. Then for every module $M\in \smod^{\Z^{i}\times H} \varphi_i^*(A)[\Z\varepsilon_i]$, one has
\begin{eqnarray*}
\begin{split}
\gkdim_{\varphi_{i-1}^*(A)} \Xi_{\varphi_i^*(A)}^{g_i} (M)
& = \gkdim_{\varphi_{i-1}^*(A)[t,t^{-1}]} \Xi_{\varphi_i^*(A)}^{g_i} (M) [t,t^{-1}] -1 \\
& = \gkdim_{\varphi_i^*(A)[\Z\varepsilon_i]} M -1 \\
& = \gkdim_{\varphi_i^*(A)[\Z\varepsilon_i]} \varphi_i^*(A)[\Z\varepsilon_i] -\grad_{\varphi_i^*(A)[\Z\varepsilon_i]}^{\Z^i\times H} M - 1 \\
& = \gkdim_{\varphi_{i-1}^*(A)} \varphi_{i-1}^*(A) - \grad_{\varphi_i^*(A)[\Z\varepsilon_i]}^{\Z^i\times H} M,
\end{split}
\end{eqnarray*}
where the first and the final ``$=$'' used Lemma \ref{gkdim-tensor}, and the second ``='' will be justified in the next paragraph. Then Lemma \ref{ext-regrading-surjective} tells us  that $\varphi_{i-1}^*(A)$ is $(\Z^{i-1}\times H)$-Cohen-Macaulay.

First we introduce a (ungraded) $\varphi_i^*(A)[\Z\varepsilon_i]$-module structure on $\Xi_{\varphi_i^*(A)}^{g_i} (M)[t,t^{-1}]$ via restriction of scalars along the isomorphism $\varphi_i^*(A)[\Z\varepsilon_i] \xrightarrow{\cong} \varphi_{i-1}^*(A)[t,t^{-1}] $ of (ungraded) algebras given by
$$\varphi_i^*(A)[\Z\varepsilon_i]_\gamma \ni  ae_{n\varepsilon_i} \,\,  \mapsto \,\,  a\cdot t^{q_i(\gamma)}.$$
Then it is not hard to see that  the linear map $M \to  \Xi_{\varphi_i^*(A)}^{g_i} (M) [t,t^{-1}]$ given by
$$
M_\gamma \ni m \,\,  \mapsto \,\, \bar{m} \cdot t^{q_i(\gamma)}
$$
is an isomorphism of $\varphi_i^*(A)[\Z\varepsilon_i]$-modules, where
$\bar{m}$ is the canonical image of $m$ in $\Xi_{\varphi_i^*(A)}^{g_i} (M) \cong M/e_{\varepsilon_i}M$. This isomorphism of modules gives the second ``='' above, and thereof the proof is completed.
\end{proof}

We do not know an answer of the following natural question.

\begin{question}
For a $G$-graded algebra $A$ and a group homomorphism $\varphi:G\to H$ with $\ker\varphi$ finite, whether $A$ is $G$-Cohen-Macaulay is equivalent to that $\varphi^*(A)$ is $H$-Cohen-Macaulay?
\end{question}

\section{Homo-filtrations}

The theory of filtered rings and filtered modules has been well developed in literatures. In this section, we generalize some classical results in this respect from our perspective.

By a {\em homo-filtration} on a $G$-graded ring  $A$ we mean an increasing sequence $F=\{\,F_nA\,\}_{n\in \Z}$ of homogeneous subgroups of $A$ such that  $1\in F_0A$, $\cup_{n\in Z}F_nA=A$ and $F_mA\cdot F_nA\subseteq F_{m+n}A$ for all $m,\,n\in \Z$. The homo-filtration $F$ is called \emph{positive} if $F_nA=0$ for $n<0$, and $F$ is called \emph{locally discrete} if for every $\gamma \in G$ there is an integer $n=n(\gamma)$ such that $F_nA_\gamma = 0$. The $\bar{G}$-graded rings
$$
R_F(A): =\bigoplus\nolimits_{n\in\Z} F_nA\cdot t^n \ \subseteq \ A[t,t^{-1}]
$$
and
$$
G_F(A):=R_F(A)/(t) \cong
\bigoplus\nolimits_{n\in \Z} F_nA/F_{n-1}A.$$
are called respectively the \emph{Rees ring of $A$} and the \emph{associated graded ring of $A$} with respect to $F$. Note that $t\in R_F(A)$ is regular, central and homogeneous of degree $(1,0)$.

The next two trivial but key lemmas make it possible to lift information from  the associated graded ring to the homo-filtered ring in a unified and elegant way.

\begin{lemma}\label{Rees-ring-localization}
Let $A$ be a $G$-graded ring and $F$ a homo-filtration on $A$. Then,  as $\bar{G}$-graded rings,
\begin{flalign*}
&& R_F(A)[t^{-1}] \cong & A[t,t^{-1}]. & \Box
\end{flalign*}
\end{lemma}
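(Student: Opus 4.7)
The plan is to exploit the fact that $R_F(A)$ is already defined as a $\bar{G}$-graded subring of $A[t,t^{-1}]$, and that $t$ sits inside $R_F(A)$ as a homogeneous central element of degree $(1,0)$. Indeed $1\in F_0A\subseteq F_1A$, so $t = 1\cdot t \in F_1A\cdot t \subseteq R_F(A)$; and $t$ is central in $A[t,t^{-1}]$ by the conventions fixed at the start of the paper, hence central in the subring $R_F(A)$. The inclusion $\iota:R_F(A)\hookrightarrow A[t,t^{-1}]$ therefore sends the Ore set $\{t^i\}_{i\geq0}$ to units, so by the universal property of Ore localization it factors through a unique ring homomorphism
\[
\phi:R_F(A)[t^{-1}]\longrightarrow A[t,t^{-1}].
\]
Since $t$ is homogeneous of degree $(1,0)$ in both rings, $t^{-1}$ is forced to have degree $(-1,0)$, and $\phi$ respects the $\bar{G}$-grading.

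For surjectivity I would argue degree by degree: take a homogeneous element $at^i\in A[t,t^{-1}]$ with $a\in A$ homogeneous and $i\in\Z$. Because $\bigcup_n F_nA = A$ and the filtration is increasing, I can choose $n\geq \max\{0,i\}$ with $a\in F_nA$. Then $at^n\in F_nA\cdot t^n\subseteq R_F(A)$, and $\phi(at^n\cdot t^{-(n-i)}) = at^i$. For injectivity, every element of $R_F(A)[t^{-1}]$ has the form $xt^{-n}$ with $x\in R_F(A)$, $n\geq 0$; if $\phi(xt^{-n})=0$ in $A[t,t^{-1}]$ then $x=0$ there, and since $\iota$ is an inclusion, $x=0$ in $R_F(A)$, hence $xt^{-n}=0$ in $R_F(A)[t^{-1}]$.

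There is really no main obstacle here: the lemma is essentially a repackaging of the definition of $R_F(A)$. The only point that deserves any care is the verification that the $\bar{G}$-grading agrees on both sides, and this reduces to observing that the inclusion $\iota$ is homogeneous of degree $(0,0)$ and that $t$ carries the same degree in $R_F(A)$ and $A[t,t^{-1}]$. Everything else is a bookkeeping check on the universal property of localization at a central regular homogeneous element.
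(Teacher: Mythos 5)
Your proof is correct and is exactly the argument the paper intends: the lemma is stated with its proof omitted as trivial, and the expected verification is precisely yours, namely that $R_F(A)$ sits inside $A[t,t^{-1}]$ with $t$ a central regular homogeneous element of degree $(1,0)$, so inverting $t$ recovers all of $A[t,t^{-1}]$ since $\bigcup_n F_nA=A$. Your degree bookkeeping and the injectivity/surjectivity checks are all sound, so there is nothing to add.
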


\begin{lemma}\label{Laurent-equivalence}
Let $A$ be a $G$-graded ring. Then the functor $\Mod^GA\to \Mod^{\bar{G}}A[t,t^{-1}]$ given by $M\mapsto M[t,t^{-1}]$ is an equivalence of abelian categories. Moreover, there is a natural isomorphism $$\uExt_{A[t,t^{-1}]}^{i,\bar{G}}(M[t,t^{-1}],A[t,t^{-1}]) \cong \uExt_A^{i,G}(M,A)[t,t^{-1}]$$ in $\Mod^{\bar{G}}A[t,t^{-1}]^o$ for every integer $i\in \Z$ and every module  $M\in \Mod^GA$. \hspace{\fill} $\Box$
\end{lemma}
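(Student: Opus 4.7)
The plan is to build an explicit quasi-inverse to the functor $F: M \mapsto M[t,t^{-1}]$. Define $\Gamma: \Mod^{\bar G}A[t,t^{-1}] \to \Mod^GA$ by $\Gamma(N):=\bigoplus_{\gamma\in G} N_{(0,\gamma)}$, viewed as a $G$-graded $A$-module via the degree-preserving inclusion $A_\gamma \hookrightarrow A[t,t^{-1}]_{(0,\gamma)}$. One immediately checks $\Gamma\circ F=\mathrm{id}$. For the other composition, the key observation is that $t$ is a homogeneous unit of $A[t,t^{-1}]$ of degree $(1,0)$, so left multiplication by $t^n$ is an isomorphism of $G$-graded $A$-modules $N_{(0,*)}\xrightarrow{\cong} N_{(n,*)}:=\bigoplus_\gamma N_{(n,\gamma)}$; assembling these over $n\in\Z$ yields a natural isomorphism $F(\Gamma(N))\xrightarrow{\cong}N$ in $\Mod^{\bar G}A[t,t^{-1}]$. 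Both functors are manifestly additive and exact, giving the claimed equivalence of abelian categories.

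For the Ext identification, I would fix a projective resolution $P^\bullet\to M$ in $\Mod^GA$. Since $F$ is an exact equivalence, $P^\bullet[t,t^{-1}]\to M[t,t^{-1}]$ is a projective resolution in $\Mod^{\bar G}A[t,t^{-1}]$, so the target Ext-group is the $i$-th cohomology of the complex $\uHom^{\bar G}_{A[t,t^{-1}]}(P^\bullet[t,t^{-1}],A[t,t^{-1}])$ in $\Mod^{\bar G}A[t,t^{-1}]^o$. Since the operation $(-)[t,t^{-1}]$ is exact, it commutes with passage to cohomology; hence the proof reduces to producing a natural isomorphism of complexes
$$\alpha_{P^\bullet}:\uHom_A^G(P^\bullet,A)[t,t^{-1}]\xrightarrow{\cong} \uHom^{\bar G}_{A[t,t^{-1}]}(P^\bullet[t,t^{-1}],A[t,t^{-1}])$$
in $\Mod^{\bar G}A[t,t^{-1}]^o$, for each $G$-graded projective $P$, natural in $P$.

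The natural candidate is $\alpha_P(f\otimes t^n)(p\otimes t^m)=f(p)\,t^{n+m}$ on homogeneous elements; one checks directly that it is $\bar G$-homogeneous and $A[t,t^{-1}]^o$-linear. When $P=\Sigma_\gamma A$ the map becomes the obvious identification $\Sigma_{(0,\gamma)}A[t,t^{-1}]=\Sigma_{(0,\gamma)}A[t,t^{-1}]$, hence an isomorphism. The only genuinely delicate step, and therefore the place that needs careful attention, is the passage from rank-one free modules to arbitrary free modules $P=\bigoplus_\alpha \Sigma_{\gamma_\alpha}A$: here both sides become infinite products of shifts, and one must verify that $(-)[t,t^{-1}]$ is compatible with the product appearing in $\uHom$. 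I would check this by comparing homogeneous pieces: in $\bar G$-degree $(n,\delta)$ both $(\prod_\alpha \Sigma_{-\gamma_\alpha}A)[t,t^{-1}]$ and $\prod_\alpha \Sigma_{(0,-\gamma_\alpha)}A[t,t^{-1}]$ evaluate to $\prod_\alpha A_{\delta-\gamma_\alpha}\cdot t^n$, so $\alpha_P$ is an isomorphism piece by piece.

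Once $\alpha_P$ is established for free modules, it extends to projectives by taking direct summands. Applied termwise to $P^\bullet$, the naturality provides the required isomorphism of complexes, and passage to cohomology yields the natural isomorphism in the statement. I expect the main (and essentially only) obstacle to be the bookkeeping in the previous paragraph — the compatibility of the product-over-$\alpha$ with the $(-)[t,t^{-1}]$ construction — but this dissolves once one writes out the $\bar G$-graded pieces explicitly.
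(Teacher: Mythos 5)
Your proof is correct, and it is exactly the standard argument the paper has in mind: the lemma is stated with its proof omitted as routine, the quasi-inverse $N\mapsto\bigoplus_{\gamma}N_{(0,\gamma)}$ together with multiplication by powers of the unit $t$ being the expected equivalence, and the termwise comparison map $\alpha_{P^\bullet}$ on a resolution the expected source of the $\uExt$-isomorphism. Your degreewise check that $\uHom$ out of a possibly infinite direct sum of shifted free modules turns into the same product on both sides is precisely the point that makes the argument go through without finiteness hypotheses, so nothing is missing.
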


Now we proceed to deal with those properties that we concern.

\begin{proposition}
Let $A$ be a $G$-graded ring and $\varphi:G\to H$ a group homomorphism. Let $F$ be a locally discrete homo-filtration on $A$. Suppose that $G_F(A)$ is left $\bar{G}$-noetherian. Then $$\idim_{\varphi^*(A)}^H\varphi^*(A) \leq \idim_{G_F(A)}^{\bar{G}} G_F(A)\  + \text{ the rank of } \ker\varphi.$$
\end{proposition}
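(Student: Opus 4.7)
The plan is to exploit the Rees ring $R_F(A)$, which comes equipped with a regular central homogeneous element $t$ of degree $(1,0)$, together with the two associated rings $R_F(A)/tR_F(A) \cong G_F(A)$ and $R_F(A)[t^{-1}] \cong A[t,t^{-1}]$. The picture is an instance of the setup in Section \ref{thesection-3} with $(R,\hbar) = (R_F(A), t)$ over the grading group $\bar{G}$, so the results of that section should transfer information from $G_F(A)$ to $A[t,t^{-1}]$ and hence, via Lemma \ref{Laurent-equivalence}, to $A$ itself; then Theorem \ref{homo-dim-regrading-general} absorbs the regrading $\varphi$ at the cost of $r_\varphi$.

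First I would verify that $R_F(A)$ is $t$-discrete as a $\bar{G}$-graded ring. Since $F$ is locally discrete, for every $\alpha \in G$ there is an integer $n(\alpha)$ with $F_k A_\alpha = 0$ for $k < n(\alpha)$; consequently for each $(m,\alpha)\in \bar G$ one has $(t^n R_F(A))_{(m,\alpha)} = F_{m-n}A_\alpha\cdot t^m = 0$ once $n$ is sufficiently large. Obviously $R_F(A)$ is $t$-torsionfree as $t$ is regular. Applying Proposition \ref{noetherian-cut-at-infinity} to $(R,\hbar)=(R_F(A),t)$, the hypothesis that $G_F(A)$ is left $\bar{G}$-noetherian forces $R_F(A)$ to be left $\bar{G}$-noetherian, and hence so is $A[t,t^{-1}] = R_F(A)[t^{-1}]$. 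By the equivalence in Lemma \ref{Laurent-equivalence}, $A$ itself is left $G$-noetherian, which puts us in the regime of Theorem \ref{homo-dim-regrading-general}(1).

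Next I would apply Proposition \ref{idim-cut-at-infinity-special} to $N = R_F(A)$ (which is $t$-torsionfree and $t$-discrete by the preceding step). Since $\bar R\otimes_R N = G_F(A)$ and $\hat R\otimes_R N = A[t,t^{-1}]$, the proposition yields
$$\idim_{A[t,t^{-1}]}^{\bar G} A[t,t^{-1}] \;\leq\; \idim_{G_F(A)}^{\bar G} G_F(A).$$
Lemma \ref{Laurent-equivalence} identifies the left-hand side with $\idim_A^G A$. Finally, Theorem \ref{homo-dim-regrading-general}(1) applied to the $G$-graded ring $A$ (now known to be left $G$-noetherian) and the group homomorphism $\varphi$ gives
$$\idim_{\varphi^*(A)}^H \varphi^*(A) \;\leq\; \idim_A^G A + r_\varphi \;\leq\; \idim_{G_F(A)}^{\bar G} G_F(A) + r_\varphi,$$
which is the desired bound.

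The argument is essentially a cascade through the machinery already established, so there is no serious obstacle; the only point requiring care is the verification that local discreteness of $F$ is exactly the condition that makes $R_F(A)$ into a $t$-discrete $\bar{G}$-graded ring, since this is the hypothesis that unlocks both Proposition \ref{noetherian-cut-at-infinity} (to propagate the noetherian property) and Proposition \ref{idim-cut-at-infinity-special} (to bound the injective dimension of the localization by that of the quotient, without paying an extra $+1$).
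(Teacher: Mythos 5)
Your proposal is correct and follows essentially the same route as the paper's proof: Proposition \ref{noetherian-cut-at-infinity} together with Lemma \ref{Rees-ring-localization} to get noetherianness of $R_F(A)$ and $A[t,t^{-1}]$, Proposition \ref{idim-cut-at-infinity-special} with $(R,\hbar)=(R_F(A),t)$ for the inequality, Lemma \ref{Laurent-equivalence} to descend to $A$, and Theorem \ref{homo-dim-regrading-general}(1) to absorb $\varphi$. Your explicit check that local discreteness of $F$ yields $t$-discreteness of $R_F(A)$ is a welcome detail that the paper leaves implicit.
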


\begin{proof}
By Proposition \ref{noetherian-cut-at-infinity} and Lemma \ref{Rees-ring-localization}, $R_F(A)$ and $A[t,t^{-1}]$ are left $\bar{G}$-noetherian. It follows that $A$ is left $G$-noetherian by Lemma \ref{Laurent-equivalence} and
$$
\idim_A^GA =\idim_{A[t,t^{-1}]}^{\bar{G}} A[t,t^{-1}] \leq \idim_{G_F(A)}^{\bar{G}} G_F(A),
$$
where the ``='' is also by Lemma \ref{Laurent-equivalence} and the ``$\leq$'' comes from Lemma \ref{Rees-ring-localization} and Proposition \ref{idim-cut-at-infinity-special} (with $(R,\hbar)=(R_F(A),t)$). The result now follows immediately by Theorem \ref{homo-dim-regrading-general} (1).
\end{proof}

\begin{remark}
The injective dimension of a homo-filtered $G$-graded module (the definition is clear) over a homo-filtered $G$-graded ring can be characterized similarly.
\end{remark}

\begin{proposition}
Let $A$ be a $G$-graded ring and $\varphi:G\to H$ a group homomorphism. Let $F$ be a positive homo-filtration on $A$. Suppose  the number of elements of $T(\ker\varphi)$ is invertible in $A$. Then
$$
\gldim(\Mod^H\varphi^*(A)) \leq \gldim(\Mod^{\bar{G}} G_F(A))\ + \text{ the rank of } \ker\varphi.
$$
\end{proposition}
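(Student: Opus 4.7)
The plan is to mirror, almost verbatim, the proof of the preceding proposition on injective dimension, but using the global-dimension analogues of the cut-at-infinity results. The Rees ring $R_F(A)$, viewed as a $\bar{G}$-graded ring via $\bar{G}=\Z\times G$, serves as the bridge between $G_F(A)$ and $A[t,t^{-1}]$ through the central regular homogeneous element $t$ of degree $\varepsilon=(1,0)$.

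First I would verify that we are in a position to apply Proposition \ref{gldim-cut-at-infinity-special} to the pair $(R,\hbar)=(R_F(A),t)$ via its condition (b). Positivity of $F$ gives $\supp R_F(A)\subseteq \N\times G$; taking $p:\bar{G}\to \Z$ to be the projection onto the first coordinate, we have $p(\supp R_F(A))\subseteq \N$ and $p(\varepsilon)=1>0$, so condition (b) is satisfied (no noetherian hypothesis is needed for the first conclusion of that proposition). Hence
$$
\gldim(\Mod^{\bar{G}} R_F(A)[t^{-1}]) \;\leq\; \gldim(\Mod^{\bar{G}} G_F(A)).
$$

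Next, I would identify the left-hand side with $\gldim(\Mod^G A)$. By Lemma \ref{Rees-ring-localization}, $R_F(A)[t^{-1}]\cong A[t,t^{-1}]$ as $\bar{G}$-graded rings, while the equivalence $M\mapsto M[t,t^{-1}]$ of Lemma \ref{Laurent-equivalence} gives $\gldim(\Mod^G A)=\gldim(\Mod^{\bar{G}} A[t,t^{-1}])$. Combining,
$$
\gldim(\Mod^G A) \;\leq\; \gldim(\Mod^{\bar{G}} G_F(A)).
$$
Finally, because $|T(\ker\varphi)|$ is by hypothesis invertible in $A$, Theorem \ref{homo-dim-regrading-general}(2) applies and yields
$$
\gldim(\Mod^H \varphi^*(A)) \;\leq\; \gldim(\Mod^G A) \;+\; \text{the rank of }\ker\varphi.
$$
Concatenating the last two inequalities gives the stated bound.

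There is no real obstacle here: all the heavy lifting has already been packaged into Proposition \ref{gldim-cut-at-infinity-special}, Lemma \ref{Laurent-equivalence}, and Theorem \ref{homo-dim-regrading-general}(2). The only point demanding a moment of care is confirming that it is alternative (b), not (a), of Proposition \ref{gldim-cut-at-infinity-special} that is invoked, since we neither assume $G_F(A)$ nor $R_F(A)$ to be noetherian; positivity of $F$ is precisely what makes (b) available.
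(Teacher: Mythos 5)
Your proposal is correct and follows essentially the same route as the paper: both invoke Proposition \ref{gldim-cut-at-infinity-special} for $(R,\hbar)=(R_F(A),t)$ via the projection $p:\bar{G}\to\Z$ (condition (b), enabled by positivity of $F$), identify $R_F(A)[t^{-1}]\cong A[t,t^{-1}]$ by Lemma \ref{Rees-ring-localization}, pass to $\gldim(\Mod^GA)$ by Lemma \ref{Laurent-equivalence}, and finish with Theorem \ref{homo-dim-regrading-general}(2). Your explicit remark that no noetherian hypothesis is needed for this conclusion is a correct and welcome clarification, but the argument is the same.
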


\begin{proof}
Let $p:\bar{G}\to \Z$ be the map given by $(n,\gamma)\mapsto n$. Clearly,
$p(\supp R_F(A))  \subseteq \N$ and $p(1,0)>0$. It follows that
$$
\gldim(\Mod^GA) = \gldim(\Mod^{\bar{G}}A[t,t^{-1}]) \leq \gldim(\Mod^{\bar{G}} G_F(A)),
$$
where the ``='' used Lemma \ref{Laurent-equivalence} and the ``$\leq$'' comes from Lemma \ref{Rees-ring-localization} and Proposition \ref{gldim-cut-at-infinity-special} (with $(R,\hbar) = (R_F(A),t)$). The result now follows immediately by Theorem \ref{homo-dim-regrading-general} (2).
\end{proof}

\begin{proposition}\label{Auslander-filtration}
Let $A$ be a $G$-graded ring and $\varphi:G\to H$ a group homomorphism. Let $F$ be a locally discrete homo-filtration on $A$. Then, if $G_F(A)$ is $\bar{G}$-Gorenstein (resp. $\bar{G}$-Auslander-Gorenstein, resp. $\bar{G}$-Auslander-regular and the number of $T(\ker\varphi)$ is invertible in $A$), it follows that $\varphi^*(A)$ is $H$-Gorenstein (resp. $H$-Auslander-Gorenstein, resp. $H$-Auslander-regular).
\end{proposition}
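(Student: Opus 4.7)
The plan is to mirror the arguments of the two preceding propositions in this section but with the cut-at-infinity results for the Auslander condition (Theorem \ref{Auslander-cut-at-infinity-special}) in place of those for injective and global dimensions. The bridge is the Rees ring $R_F(A)$, equipped with its regular central homogeneous element $t$ of degree $(1,0)\in \bar{G}$; recall that $R_F(A)/(t)\cong G_F(A)$ and, by Lemma \ref{Rees-ring-localization}, $R_F(A)[t^{-1}]\cong A[t,t^{-1}]$.

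The first concrete step is to verify that $R_F(A)$ is $t$-discrete as a $\bar{G}$-graded ring, since this is what permits the use of Theorem \ref{Auslander-cut-at-infinity-special}. For any $(m,\gamma)\in \bar{G}$ one has $(t^n\cdot R_F(A))_{(m,\gamma)}=F_{m-n}A_\gamma\cdot t^m$, which vanishes once $n$ is large enough because $F$ is locally discrete. Applying parts (1)--(3) of Theorem \ref{Auslander-cut-at-infinity-special} to $(R,\hbar)=(R_F(A),t)$ then yields, under the respective hypotheses on $G_F(A)$, that both $R_F(A)$ and $A[t,t^{-1}]\cong R_F(A)[t^{-1}]$ are $\bar{G}$-Gorenstein (resp.\ $\bar{G}$-Auslander-Gorenstein, resp.\ $\bar{G}$-Auslander-regular).

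Next, Lemma \ref{Laurent-equivalence} delivers an equivalence of abelian categories $\Mod^GA\simeq \Mod^{\bar{G}}A[t,t^{-1}]$ that identifies $\uExt$-groups with coefficients in $A$ and in $A[t,t^{-1}]$; the same holds on the opposite side. This transfers the left/right noetherianness, the finiteness of injective and global dimensions, and the Auslander condition from $A[t,t^{-1}]$ to $A$. Hence $A$ is $G$-Gorenstein (resp.\ $G$-Auslander-Gorenstein, resp.\ $G$-Auslander-regular). A final application of Theorem \ref{Auslander-regrading}, whose parts (1), (2), (3) handle precisely these three cases (with (3) requiring the invertibility of $|T(\ker\varphi)|$ in $A$), carries the conclusion across the regrading $\varphi$ to $\varphi^*(A)$. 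The one genuinely new ingredient is the $t$-discreteness verification, which is where local discreteness of $F$ is essential; everything else is a mechanical chaining of results already established.
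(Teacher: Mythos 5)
Your proposal is correct and follows essentially the same route as the paper: apply Theorem \ref{Auslander-cut-at-infinity-special} to $(R,\hbar)=(R_F(A),t)$, use Lemma \ref{Rees-ring-localization} and Lemma \ref{Laurent-equivalence} to pass from $G_F(A)$ through $A[t,t^{-1}]$ to $A$, and finish with Theorem \ref{Auslander-regrading}. Your explicit verification that local discreteness of $F$ makes $R_F(A)$ $t$-discrete is a point the paper leaves implicit, and it is carried out correctly.
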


\begin{proof}
By Lemma \ref{Laurent-equivalence}, $A$ is $G$-Gorenstein (resp. $G$-Auslander-Gorenstein, resp. $G$-Auslander regular)  if and only if $A[t,t^{-1}]$ is $\bar{G}$-Gorenstein (resp. $\bar{G}$-Auslander-Gorenstein, resp. $\bar{G}$-Auslander-regular). The result now follows by Lemma \ref{Rees-ring-localization}, Theorem \ref{Auslander-cut-at-infinity-special} (with $(R,\hbar)=(R_F(A),t)$) and Theorem \ref{Auslander-regrading}.
\end{proof}

When $A$ is a $G$-graded algebra (over a field $\mathbb{K}$), each layer of a homo-filtration $F$ on $A$ is required to be a homogeneous subspace of $A$.  Then $G_F(A)$ and $R_F(A)$ are indeed $\bar{G}$-graded algebras.

\begin{proposition}\label{Cohen-Macaulay-filtration}
Let $A$ be a $G$-graded algebra and $\varphi:G\to H$ a group homomorphism. Let $F$ be a positive homo-filtration on $A$ such that all $F_nA$ are locally finite and $p(\supp R_F(A))\subseteq \N^r$ for some injective group homomorphism $p:\bar{G} \to \Z^r$. Then, if $G_F(A)$ is $\bar{G}$-Auslander-Gorenstein and $\bar{G}$-Cohen-Macaulay, it follows that  $\varphi^*(A)$ is $H$-Auslander-Gorenstein and  $H$-Cohen-Macaulay.
\end{proposition}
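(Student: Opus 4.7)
The plan is to use the Rees ring $R_F(A)$ as a bridge: lift the hypothesised properties from $G_F(A)$ to $R_F(A)$, transfer them through the extended regrading $\bar\varphi:\bar{G}\to\bar{H}$ given by $(n,\gamma)\mapsto(n,\varphi(\gamma))$ where $\bar{H}=\Z\times H$, localise to reach $\varphi^*(A)[t,t^{-1}]$, and finally descend via the Laurent equivalence of Lemma \ref{Laurent-equivalence} to $\varphi^*(A)$.

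First I would verify that $R_F(A)$ is locally finite, since its degree $(n,\gamma)$ piece is $F_nA_\gamma\cdot t^n$, and well-supported: composing the given injective $p:\bar{G}\to\Z^r$ with the summation $(n_1,\ldots,n_r)\mapsto n_1+\cdots+n_r$ yields a map $\bar{G}\to\Z$ sending $\supp R_F(A)$ into $\N$ with finite fibres, since each level set of the sum meets $\N^r$ in finitely many points and $p$ is injective. The central regular element $t\in R_F(A)$ has degree $(1,0)\notin T(\bar{G})$, so Theorem \ref{Cohen-Macaulary-cut-at-infinity-quotient} applied to $(R,\hbar)=(R_F(A),t)$ lifts the $\bar{G}$-Auslander-Gorenstein and $\bar{G}$-Cohen-Macaulay properties from $G_F(A)\cong R_F(A)/(t)$ to $R_F(A)$ itself.

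Next I would apply Theorem \ref{Cohen-Macaulay-regrading} to the $\bar{G}$-graded algebra $R_F(A)$ with the homomorphism $\bar\varphi$; its hypotheses (local finiteness and an injective $p$ with image in $\N^r$) are inherited directly, so $\bar\varphi^*(R_F(A))$ becomes $\bar{H}$-Auslander-Gorenstein and $\bar{H}$-Cohen-Macaulay. Then I would apply Theorem \ref{Auslander-cut-at-infinity-special}(2) together with Theorem \ref{Cohen-Macaulary-cut-at-infinity-localization} to $(\bar\varphi^*(R_F(A)),t)$: positivity of $F$ makes $\bar\varphi^*(R_F(A))$ $t$-discrete, $t$ is central so $\tau_t=\id$ and its orbit on any homogeneous element is one-dimensional, and $\bar\varphi^*(R_F(A))/(t)\cong\bar\varphi^*(G_F(A))$ is $\bar{H}$-Auslander-Gorenstein by Theorem \ref{Auslander-regrading}(2). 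These yield that the localisation $\bar\varphi^*(R_F(A))[t^{-1}]$ is both $\bar{H}$-Auslander-Gorenstein and $\bar{H}$-Cohen-Macaulay. By Lemma \ref{Rees-ring-localization} and the compatibility of regrading with localisation, this localisation is canonically $\varphi^*(A)[t,t^{-1}]$ as $\bar{H}$-graded algebras.

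Finally I would descend from $\varphi^*(A)[t,t^{-1}]$ to $\varphi^*(A)$ using the Laurent equivalence $N\mapsto N[t,t^{-1}]$ of Lemma \ref{Laurent-equivalence}, which preserves finite generation and grade, together with Lemma \ref{gkdim-tensor}, which provides a uniform shift of GK-dimension by exactly one upon tensoring with $\mathbb{K}[t,t^{-1}]$. Combining these, the Cohen-Macaulay equality over $\varphi^*(A)[t,t^{-1}]$ translates into the corresponding equality over $\varphi^*(A)$, while Auslander-Gorensteinness transfers by the Ext-preserving isomorphism of Lemma \ref{Laurent-equivalence}. I expect the main obstacle to lie in this last step: invoking Lemma \ref{gkdim-tensor} requires finite-dimensional generating subspaces of $\varphi^*(A)$ and of each $N\in\smod^H\varphi^*(A)$ for which $\gkdim N$ is realised as a genuine limit rather than merely a $\limsup$, a condition that must be deduced from the polynomial growth forced by the injective embedding $p:\bar{G}\hookrightarrow\Z^r$ with image in $\N^r$ and the local finiteness of each $F_nA$.
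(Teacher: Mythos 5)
Your proposal is correct and follows essentially the same route as the paper: lift from $G_F(A)$ to $R_F(A)$ via Theorem \ref{Cohen-Macaulary-cut-at-infinity-quotient}, regrade along $\bar\varphi=\id_\Z\times\varphi$ using Theorem \ref{Cohen-Macaulay-regrading}, pass to the localization $\bar\varphi^*(R_F(A))[t^{-1}]\cong\varphi^*(A)[t,t^{-1}]$ by Theorem \ref{Auslander-cut-at-infinity-special}(2), Theorem \ref{Cohen-Macaulary-cut-at-infinity-localization} and Lemma \ref{Rees-ring-localization}, and descend through Lemma \ref{Laurent-equivalence} together with the GK-dimension shift of Lemma \ref{gkdim-tensor}. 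The obstacle you anticipate in the last step is not an issue: the limit hypothesis of Lemma \ref{gkdim-tensor} is verified on the factor $(\mathbb{K}[t,t^{-1}],\mathbb{K}[t,t^{-1}])$, whose growth is exactly linear, so no limit condition on modules over $\varphi^*(A)$ is needed.
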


\begin{proof}
Assume $G_F(A)$ is $\bar{G}$-Auslander-Gorenstein and $\bar{G}$-Cohen-Macaulay. Clearly, $R_F(A)$ is well-supported and locally finite. It follows that $R_F(A)$ is $\bar{G}$-Auslander-Gorenstein and $\bar{G}$-Cohen-Macaulay by Theorem \ref{Cohen-Macaulary-cut-at-infinity-quotient}. Consider the map $\bar{\varphi}=\id_{\Z}\times \varphi: \bar{G} \to \bar{H}$. Then $\bar{\varphi}^*(R_F(A))$ is $\bar{H}$-Auslander-Gorenstein and $\bar{H}$-Cohen-Macaulay by Theorem \ref{Cohen-Macaulay-regrading}. Also,  Lemma \ref{Rees-ring-localization} tells us that $$\varphi^*(A)[t,t^{-1}] = \bar{\varphi}^*(A[t,t^{-1}]) \cong \bar{\varphi}^*(R_F(A)[t^{-1}])= \bar{\varphi}^*(R_F(A)) [t^{-1}]$$ as $\bar{H}$-graded algebras. Consequently, $\varphi^*(A)[t,t^{-1}]$ is is $\bar{H}$-Auslander-Gorenstein and $\bar{H}$-Cohen-Macaulay by Theorem \ref{Auslander-cut-at-infinity-special} (2) and Theorem \ref{Cohen-Macaulary-cut-at-infinity-localization}. Further, we have by Lemma \ref{gkdim-tensor} that $$\gkdim_{\varphi^*(A)[t,t^{-1}]}M[t,t^{-1}] = \gkdim_{\varphi^*(A)}M +1$$
for every module $M\in \Mod^H \varphi^*(A)$.
It follows that $\varphi^*(A)$ is $H$-Cohen-Macaulay by Lemma \ref{Laurent-equivalence}.
\end{proof}

\begin{example}
The first Wely algebra $A=A_1(\mathbb{K})$ is generated over $\mathbb{K}$ by two generators $x,\,y$ with one relation $xy-yx-1=0$. We introduce a $\Z$-grading on $A$ by putting $\deg(x)=1$ and $\deg(y)=-1$. Let  $F=\{\, F_nA\, \}_{n\in \Z}$ be the positive homo-filtration on $A$ given  by
$$
F_nA = \Span_{\mathbb{K}}\{\, x^iy^j\, |\, i\geq0,\, n\geq j \geq0\, \}.
$$
Clearly, $F_nA$ is infinite dimensional but locally finite for $n\geq0$; and $\supp R_F(A) = \N\, (1,0) +\N \, (1,-1)$. Also, it is not hard to see that $G_F(A) \cong \mathbb{K}[u,v]$ as $\Z^2$-graded algebras. Here the polynomial algebra $\mathbb{K}[u,v]$ is $\Z^2$-graded with $\deg(u)=(1,0)$ and $\deg(v)=(1,-1)$. Then, by Proposition \ref{Auslander-filtration} and Proposition \ref{Cohen-Macaulay-filtration}, one can conclude that $A$ is (ungraded) Auslander-regular and Cohen-Macaulay.
\end{example}

\begin{example}
In the work \cite{SZL2}, the authors classify out a class of Artin-Schelter regular algebras of dimension four with two generators. They are of the form $\mathcal{J}=\mathbb{K}\langle x,y\rangle/(f_1,f_2)$ with
\begin{eqnarray*}
\begin{split}
f_1 &= xy^2-2yxy+ y^2x,\\
f_2 &= x^3y-3x^2yx+3xyx^2-yx^3+(1-a)xyxy+ayx^2y \\
    &+(a-3)yxyx+(2-a)y^2x^2-by^2xy+by^3x+cy^4,
\end{split}
\end{eqnarray*}
where $a,b,c\in \mathbb{K}$, and with $\Z$-grading given by $\deg(x)=\deg(y)=1$.
Let $F=\{\, F_n\mathcal{J}\, \}_{n\in \Z}$ be the positive homo-filtration on $\mathcal{J}$ given by
$$
F_n\mathcal{J} = \text{the $\mathbb{K}$-span of all words that have at most $n$ appearances of $x$}.
$$
Clearly, $F_nA$ is infinite dimensional but locally finite for $n\geq0$; and $\supp R_F(A) = \N^2$. By the method of Gr\"{o}bner bases theory, it is not hard to see that $G_F(\mathcal{J}) \cong D(-2,-1)$ as $\Z^2$-graded algebras, where $$D(-2,-1) := \frac{\mathbb{K}\langle u,v\rangle} {(uv^2-2vuv+ v^2u,\, u^3v-3u^2vu+3uvu^2-vu^3 )}$$ is $\Z^2$-graded by $\deg(u)=(1,1)$ and $\deg(v)=(0,1)$. Then, by \cite[Thoerem C]{LPWZ}, Proposition \ref{Auslander-filtration} and Proposition \ref{Cohen-Macaulay-filtration}, one can conclude that $\mathcal{J}$ is (ungraded) Auslander-regular and Cohen-Macaulay.
\end{example}

\begin{remark}
In the work \cite{SZL1}, a method called ``Homogeneous PBW deformation'' is developed to construct new $\Z$-graded Artin-Schelter regular algebras from multi-graded old ones by adding to each relation a tail of the same $\Z$-degree but different multi-degree. It is the technique of multi-filtration to assure that this method actually preserves nice ring-theoretic and homological properties.  We want to mention that this job can be done more smoothly by the technique of homo-filtrations.

\end{remark}

\vskip7mm

\noindent{\it Acknowledgments.} G.-S. Zhou is supported by the NSFC (Grant No. 11601480); Y. Shen is supported by the NSFC (Grant No. 11626215) and Science Foundation of Zhejiang Sci-Tech University (Grant No. 16062066-Y);  D.-M. Lu is supported by the NSFC (Grant No. 11671351).

\vskip7mm


\begin{thebibliography}{LPWZ}

\bibitem{ASZ1} K. Ajitabh, S. P. Smith, and J. J. Zhang, {\em Auslander-Gorenstein rings}, Comm. Alg. \textbf{26} (1998), 2159-2180.

\bibitem{ASZ2} K. Ajitabh, S. P. Smith, and J. J. Zhang, {\em Injective resolutions of some regular rings}, J. Pure Appl. Alg. \textbf{140} (1999), 1-21.


\bibitem{Ba} H. Bass, {\em On the ubiquity of Gorenstein rings}, Math. Z. \textbf{82} (1963), 8-28.

\bibitem{Bj} J. E. Bj\"{o}rk, \emph{The Auslander conditions on Noetherian rings}, S\'{e}minaire Dubreil-Malliavin, 1987-88, Lectures Notes in Math. \textbf{1404}, Springer-Verlag, (1989), 137-173.

\bibitem{Ek} E. K. Ekstr\"{o}m, \emph{The Auslander conditions on graded and filtered noetherian rings}, S\'{e}minaire Dubreil-Malliavin, 1987-88, Lectures Notes in Math. \textbf{1404}, Springer-Verlag, (1989), 220-245.

\bibitem{FGR} R. Fossum, P. Griffith and I. Reiten, \emph{Trivial extensions of abelian categories}, Lecture Notes in Math. \textbf{456}, Springer-Verlag (1975).

\bibitem{GJ} K. R. Goodearl and D. A. Jordan, \emph{Localizations of injective modules}, Proc. Edinburgh Math. Soc. \textbf{28} (1985), 289-299.

\bibitem{Ha} R. Hazrat, \emph{Graded rings and graded Grothendieck groups}, arXiv: 1405.5071v3 [math.RA], 31 Aug 2015.


\bibitem{KL} G. R. Krause and T. H. Lenagan, {\em Growth of algebras and Gelfand-Kirillov dimension}, revised edition, Graduate Studies in Mathematics, Vol \textbf{22}, American Mathmatical Society, Rhode Island, (1999).

\bibitem{Lev} T. Levasseur, {\em Some properties of noncommutative regular graded rings}, Glasgow Math. J. \textbf{34} (1992), 277-300.

\bibitem{LPWZ} D.-M. Lu, J. H. Palmieri, Q.-S. Wu and J. J. Zhang, {\em Regular algebras of dimension 4 and their $A_\infty$-Ext-algebras}, Duke Math. J. \textbf{137} (2007), 537-584.


\bibitem{LV} H.-S. Li and F. Van Oystaeyen, \emph{Zariskian Filtrations}, K-Monographs in Mathmatics, Vol \textbf{2}, Kluwer Acadamic Publishers, (1996).


\bibitem{MR} J. McConnell and J. C. Robson, \emph{Noncommutative Noetherian Rings}, revised edition, Wiley Interscience, New York, (2001).



\bibitem{NV1} C. N\u{a}st\u{a}sescu and F. van Oystaeven, \emph{Graded and Filtered Rings and Modules}, Lecture Notes in Math. 758, Springer-Verlag, Berlin, 1979.

\bibitem{NV2} C. N\u{a}st\u{a}sescu and F. van Oystaeven, \emph{Methods of Graded Rings}, Lecture Notes in Math. \textbf{1836}, Springer-Verlag, Berlin, 2004.

\bibitem{RZ} L. Rigal and P. Zadunaisky, \emph{Twisted semigroup algebras}, Alg. Represent. Theory \textbf{18} (2015), 1155-1186.

\bibitem{Ro} D. Rogalski, \emph{An introduction to noncommutative algebraic geometry}, arXiv: 1403.3065v1 [math.RA], 12 Mar 2014.




\bibitem{SZL1} Y. Shen, G.-S. Zhou and D.-M. Lu, \emph{Homogeneous PBW deformation for Artin-Schelter regular algebras}, Bull. Aust. Math. Soc. \textbf{91} (2015), 53-68.

\bibitem{SZL2} Y. Shen, G.-S. Zhou and D.-M. Lu, \emph{Regularity criterion and classification for algebras of Jordan type}, Glasgow Math. J. \textbf{58} (2016), 69-95.

\bibitem{SoZ} A. Solotar and P. Zadunaisky, \emph{Behavior of injective dimension with respect to regradings}, arXiv: 1506.05080v1 [math.RA], 16 June 2015.









\end{thebibliography}
\end{document}